\newtheorem{theorem}{Theorem}[section]
\newtheorem{definition}[theorem]{Definition}
\newtheorem{proposition}[theorem]{Proposition}
\begin{document}

\title{The algebraic structure of quantum partial isometries}

\author{Teodor Banica}
\address{T.B.: Department of Mathematics, Cergy-Pontoise University, 95000 Cergy-Pontoise, France. {\tt teodor.banica@u-cergy.fr}}

\subjclass[2000]{46L65 (46L54)}
\keywords{Quantum isometry, Partial isometry}

\begin{abstract}
The partial isometries of $\mathbb R^N,\mathbb C^N$ form compact semigroups $\widetilde{O}_N,\widetilde{U}_N$. We discuss here the liberation question for these semigroups, and for their discrete versions $\widetilde{H}_N,\widetilde{K}_N$. Our main results concern the construction of half-liberations $\widetilde{H}_N^\times,\widetilde{K}_N^\times,\widetilde{O}_N^\times,\widetilde{U}_N^\times$ and of liberations $\widetilde{H}_N^+,\widetilde{K}_N^+,\widetilde{O}_N^+,\widetilde{U}_N^+$. We include a detailed algebraic and probabilistic study of all these objects, justifying our ``half-liberation'' and ``liberation'' claims.
\end{abstract}

\maketitle

\section*{Introduction}

A remarkable discovery, due to Wang \cite{wa1}, is that the orthogonal and unitary groups $O_N,U_N$ have free analogues $O_N^+,U_N^+$. More precisely, inspired by some previous work by Brown \cite{bro} and McClanahan \cite{mcc}, Wang considered the following two algebras:
\begin{eqnarray*}
C(O_N^+)&=&C^*\left((u_{ij})_{i,j=1,\ldots,N}\Big|u=\bar{u},u^{-1}=u^t\right)\\
C(U_N^+)&=&C^*\left((u_{ij})_{i,j=1,\ldots,N}\Big|u^{-1}=u^*,\bar{u}^{-1}=u^t\right)
\end{eqnarray*}

These two algebras have a comultiplication, counit, and antipode, defined respectively by $\Delta(u_{ij})=\sum_ku_{ik}\otimes u_{kj}$, $\varepsilon(u_{ij})=\delta_{ij}$, $S(u_{ij})=u_{ji}^*$. The general axioms found by Woronowicz in \cite{wo1}, \cite{wo2} are satisfied, and the underlying compact quantum spaces $O_N^+,U_N^+$ are therefore compact quantum groups, called free analogues of $O_N,U_N$.

The present paper is mainly concerned with the liberation question for $\widetilde{O}_N,\widetilde{U}_N$, the semigroups of partial isometries of $\mathbb R^N,\mathbb C^N$. We have several motivations for doing this work, coming from linear algebra, noncommutative geometry and free probability.

Our main sources of inspiration will be on one hand the recent advances by Bichon and Dubois-Violette \cite{bdu} and by Bhowmick, D'Andrea and Dabrowski \cite{bdd} on the half-liberation operation, and on the other hand our recent paper with Skalski \cite{bsk}, where a free analogue of the semigroup $\widetilde{S}_N$ of quantum partial permutations was constructed.

We will discuss in fact the liberation problem for several discrete and compact semigroups. We will mostly focus on six ``fundamental'' examples, as follows:
$$\begin{matrix}
\widetilde{B}_N&\subset&\widetilde{O}_N&\subset&\widetilde{U}_N\\
\\
\cup&&\cup&&\cup\\
\\
\widetilde{S}_N&\subset&\widetilde{H}_N&\subset&\widetilde{K}_N
\end{matrix}$$

The choice of these particular semigroups comes from the ``easy quantum group'' philosophy, emerging from \cite{bb+}, \cite{bbc}, \cite{bc1}, \cite{bc2}, and that we axiomatized with Speicher in \cite{bsp}. 

We will construct here liberations and half-liberations of the above 6 semigroups, by performing a case-by-case analysis. Our plan of study will be as follows:
\begin{enumerate}
\item We will first investigate the discrete case, concerning $\widetilde{S}_N,\widetilde{H}_N,\widetilde{K}_N$. Our study here will be based on \cite{bsk}, with probabilistic ingredients from \cite{bb+}, \cite{bbc}, \cite{bc2}, \cite{bv1}. 

\item We will investigate then the continuous case, concerning $\widetilde{B}_N,\widetilde{O}_N,\widetilde{U}_N$. Here we will use some extra algebraic ingredients, coming from \cite{bdd}, \cite{bdu}.
\end{enumerate}

Our results will be mostly theoretical, providing a framework for the study of various semigroups of quantum partial isometries $G\subset\widetilde{U}_N^+$. Regarding now the motivations and potential applications, as already mentioned, these come from linear algebra, noncommutative geometry, and free probability. More precisely, these are as follows:
\begin{enumerate}
\item As explained in \cite{bsk}, certain semigroups $G\subset\widetilde{S}_N^+$ coming from matrix models are of interest in connection with linear algebra questions. The problematics here is waiting to be extended, and investigated, using methods from \cite{bfs}, \cite{ss2}, \cite{wa3}.

\item An interesting question regards the general understanding of the noncommutative homogeneous spaces \cite{boc}, \cite{pod}. There is a certain similarity between the present work and the one in \cite{bss}, waiting to be understood, and axiomatized.

\item Finally, the quantum groups $S_N^+,O_N^+,U_N^+$ have been successfully used in connection with several free probability questions \cite{cur}, \cite{csp}, \cite{fwe}, \cite{ksp}. The semigroup extension of these results is an open problem, that we would like to raise here.
\end{enumerate}

The paper is organized as follows: in 1-2 we discuss the partial permutation case, in 3-4 we discuss various discrete and continuous extensions, and in 5-6 we discuss the half-liberation operation, first in the orthogonal case, and then in the unitary case.

\medskip

\noindent {\bf Acknowledgements.} I would like to thank Adam Skalski for useful discussions. This work was partly supported by the NCN grant 2012/06/M/ST1/00169.

\section{Partial permutations}

We are interested in the liberation problem for various semigroups of partial isometries. The simplest such semigroup is the one which permutes the coordinate axes of $\mathbb R^N$.

This semigroup is best introduced combinatorially, as follows:

\begin{definition}
$\widetilde{S}_N$ is the semigroup of partial permutations of $\{1\,\ldots,N\}$,
$$\widetilde{S}_N=\{\sigma:X\simeq Y|X,Y\subset\{1,\ldots,N\}\}$$
with the usual composition operation, $\sigma'\sigma:\sigma^{-1}(X'\cap Y)\to\sigma'(X'\cap Y)$.
\end{definition}

Observe that $\widetilde{S}_N$ is not simplifiable, because the null permutation $\emptyset\in\widetilde{S}_N$, having the empty set as domain/range, satisfies $\emptyset\sigma=\sigma\emptyset=\emptyset$, for any $\sigma\in\widetilde{S}_N$. Observe also that $\widetilde{S}_N$ has a ``subinverse'' map, sending $\sigma:X\to Y$ to its usual inverse $\sigma^{-1}:Y\simeq X$.

A first result on $\widetilde{S}_N$, already pointed out in \cite{bsk}, is as follows:

\begin{proposition}
The number of partial permutations is given by
$$|\widetilde{S}_N|=\sum_{k=0}^Nk!\binom{N}{k}^2$$
that is, $1,2,7,34,209,\ldots$, and with $N\to\infty$ we have $|\widetilde{S}_N|\simeq N!\sqrt{\frac{\exp(4\sqrt{N}-1)}{4\pi\sqrt{N}}}$.
\end{proposition}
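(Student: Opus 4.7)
The plan is to prove the exact count by a direct combinatorial classification, and then extract the asymptotic by recognizing the sum as a Laguerre evaluation and applying Laplace's method.

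For the exact formula, I would classify $\sigma\in\widetilde{S}_N$ by the common cardinality $k=|X|=|Y|$ of its domain and range (these are equal since $\sigma:X\simeq Y$ is a bijection). For each $k\in\{0,\ldots,N\}$, I choose the domain $X\subset\{1,\ldots,N\}$ in $\binom{N}{k}$ ways, the range $Y\subset\{1,\ldots,N\}$ in $\binom{N}{k}$ ways, and the bijection $\sigma:X\to Y$ in $k!$ ways, and summing over $k$ yields $|\widetilde{S}_N|=\sum_{k=0}^N k!\binom{N}{k}^2$. Evaluating at $N=0,1,2,3,4$ recovers the claimed values $1,2,7,34,209$.

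For the asymptotic, the cleanest manipulation is to divide by $N!$ and reindex $j=N-k$, which gives
$$\frac{|\widetilde{S}_N|}{N!}=\sum_{j=0}^N\frac{1}{(j!)^2}\cdot\frac{N!}{(N-j)!}=\sum_{j=0}^N\binom{N}{j}\frac{1}{j!}=L_N(-1),$$
so it suffices to establish $L_N(-1)\simeq\frac{1}{2\sqrt{\pi}}\,N^{-1/4}\exp(2\sqrt{N}-\tfrac12)$. I would prove this by a direct Laplace analysis of the sum. Writing $\frac{N!}{(N-j)!}=N^j\prod_{i=0}^{j-1}(1-i/N)=N^j\exp(-j^2/(2N)+O(j^3/N^2))$ converts the summand, up to negligible errors in the relevant range, to $N^j(j!)^{-2}e^{-j^2/(2N)}$. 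Using Stirling, the factor $N^j/(j!)^2$ peaks at $j^\ast\simeq\sqrt{N}$ with peak value $e^{2\sqrt{N}}/(2\pi\sqrt{N})$ and local Gaussian width $\sigma\simeq\sqrt{\sqrt{N}/2}$; the Laplace integral produces $e^{2\sqrt{N}}/(2\sqrt{\pi}\,N^{1/4})$, and the Gaussian correction $e^{-j^2/(2N)}$ evaluated at $j=\sqrt{N}$ contributes the extra $e^{-1/2}$. Rearranging puts the result in the claimed form $N!\sqrt{\exp(4\sqrt{N}-1)/(4\pi\sqrt{N})}$.

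The main obstacle is tracking all the constants through the Laplace step: the prefactor $1/\sqrt{4\pi\sqrt{N}}$ arises from combining the Stirling asymptotics of the two factors of $j!$ with the Gaussian integral, and the $-1$ inside $\exp(4\sqrt{N}-1)$ requires retaining the next-order term of $\log(1-i/N)$. These computations are standard but mildly delicate; alternatively one can simply invoke the Perron/Plancherel--Rotach asymptotic for $L_N(-x)$ specialized to $x=1$.
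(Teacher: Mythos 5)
Your proof is correct. The paper itself gives no argument for this proposition (it is stated as known, with a reference to \cite{bsk}), so there is nothing to compare against; your combinatorial classification by $k=|X|=|Y|$ is the standard derivation of the exact count, and your reduction of $|\widetilde{S}_N|/N!$ to $L_N(-1)=\sum_j\binom{N}{j}/j!$ followed by Laplace's method does produce the stated constant $e^{2\sqrt{N}-1/2}/(2\sqrt{\pi}N^{1/4})$ (peak at $j^*\simeq\sqrt{N}$, width $\sqrt{\sqrt{N}/2}$, and the slowly varying factor $e^{-j^2/(2N)}$ contributing $e^{-1/2}$), matching the form $N!\sqrt{\exp(4\sqrt{N}-1)/(4\pi\sqrt{N})}$.
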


Another result, which is trivial, but quite fundamental, is as follows:

\begin{proposition}
We have a semigroup embedding $u:\widetilde{S}_N\subset M_N(0,1)$, defined by 
$$u_{ij}(\sigma)=
\begin{cases}
1&{\rm if}\ \sigma(j)=i\\
0&{\rm otherwise}
\end{cases}$$
whose image are the matrices having at most one nonzero entry, on each row and column.
\end{proposition}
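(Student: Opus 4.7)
The plan is to verify four things about the map $u$: well-definedness as a map into $M_N(0,1)$, the identification of its image, injectivity, and multiplicativity. The first three are straightforward book-keeping; the only step that genuinely uses the precise shape of the composition law in Definition 1.1 is multiplicativity, and this is what I would treat with the most care.

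For well-definedness and the image description, I would fix $\sigma:X\simeq Y$ and note that the nonzero entries of $u(\sigma)$ are exactly the positions $(\sigma(j),j)$ with $j\in X$. Since $\sigma$ is a function, each column $j$ has at most one nonzero entry (present precisely when $j\in X$); since $\sigma$ is injective, each row $i$ has at most one nonzero entry (present precisely when $i\in Y$). Conversely, given a $(0,1)$-matrix $M$ with at most one nonzero entry per row and per column, I would recover $X$ as the set of nonzero columns, $Y$ as the set of nonzero rows, and define $\sigma(j)$ as the unique $i$ with $M_{ij}=1$; this yields a bijection $X\to Y$ inverting $u$ on its image, and in particular shows $u$ is injective.

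For multiplicativity, I would compute
$$(u(\sigma')u(\sigma))_{ij}=\sum_k u_{ik}(\sigma')\,u_{kj}(\sigma),$$
and observe that a summand equals $1$ iff $\sigma(j)=k$ and $\sigma'(k)=i$. Since $\sigma$ and $\sigma'$ are functions, at most one such $k$ exists, so the sum is $0$ or $1$. It equals $1$ iff $j\in X$, $\sigma(j)\in X'\cap Y$, and $\sigma'(\sigma(j))=i$, which by Definition 1.1 is exactly the condition that $j$ lies in the domain $\sigma^{-1}(X'\cap Y)$ of $\sigma'\sigma$ and that $(\sigma'\sigma)(j)=i$. Thus $u(\sigma'\sigma)=u(\sigma')u(\sigma)$, as desired.

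I expect no serious obstacle: the statement records the fact that the composition law for partial permutations was designed to match matrix multiplication. The one subtle point worth flagging is that the ``$X'\cap Y$'' appearing in Definition 1.1 is not imposed by hand in the matrix calculation but emerges automatically, since $\sigma(j)\in X'$ is forced by $u_{ik}(\sigma')\neq 0$ and $\sigma(j)\in Y$ is automatic from $\sigma(j)$ being defined.
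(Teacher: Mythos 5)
Your proof is correct and fills in exactly the routine verification that the paper omits (the proposition is stated there without proof, as a triviality). The multiplicativity check, including the observation that the restriction to $X'\cap Y$ emerges automatically from the vanishing of the off-support matrix entries, is precisely the intended argument.
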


Finally, a third basic result about $\widetilde{S}_N$ is as follows:

\begin{proposition}
We have an embedding $\widetilde{S}_N\subset S_{2N}$, mapping $\sigma:X\simeq Y$ to
$$\sigma'(i)=
\begin{cases}
\sigma(i)&\ if\ i\in X\\
N+r&\ if\ i=x_r\\
y_r&\ if\ i=N+r\\
i&\ if\ i>N+L 
\end{cases}$$
where $X^c=\{x_1,\ldots,x_L\}$ and $Y^c=\{y_1,\ldots,y_L\}$, with $x_1<\ldots<x_L$ and $y_1<\ldots<y_L$.
\end{proposition}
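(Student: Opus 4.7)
The plan is to verify two things: that the formula actually defines a permutation of $\{1, \ldots, 2N\}$, and that the assignment $\sigma \mapsto \sigma'$ is injective.

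For well-definedness, note first that since $\sigma : X \simeq Y$ is a bijection, $|X^c| = |Y^c|$, so with $L := |X^c|$ the enumerations $X^c = \{x_1 < \cdots < x_L\}$ and $Y^c = \{y_1 < \cdots < y_L\}$ are unambiguous. The key observation is that the four clauses defining $\sigma'$ correspond exactly to the four blocks of the partition
$$\{1, \ldots, 2N\} = X \sqcup X^c \sqcup \{N+1, \ldots, N+L\} \sqcup \{N+L+1, \ldots, 2N\},$$
and a short case check shows that these blocks are sent bijectively onto $Y$, $\{N+1, \ldots, N+L\}$, $Y^c$ and $\{N+L+1, \ldots, 2N\}$ respectively. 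Since these four images are disjoint and cover $\{1, \ldots, 2N\}$, the formula indeed defines an element $\sigma' \in S_{2N}$.

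For injectivity of $\sigma \mapsto \sigma'$, one reconstructs $\sigma$ directly from $\sigma'$: the domain is recovered as $X = \{i \in \{1, \ldots, N\} : \sigma'(i) \leq N\}$, after which $\sigma = \sigma'|_X$ and $Y = \sigma(X)$. There is no substantial obstacle in this argument---it is a direct combinatorial verification, with the only delicate point being to double-check that the four image blocks really partition $\{1, \ldots, 2N\}$. It is also worth flagging, in contrast to Proposition 1.3, that the map $\sigma \mapsto \sigma'$ is not a semigroup morphism: for instance, $\emptyset'$ is the fixed-point-free involution swapping $i$ with $N+i$, whose square is the identity rather than the image of $\emptyset \cdot \emptyset = \emptyset$. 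Accordingly, ``embedding'' here should be read as injection of sets.
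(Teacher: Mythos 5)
Your verification is correct and is exactly the direct combinatorial check the paper has in mind (the proposition is stated there without proof, as a "basic result"): the four clauses map the blocks $X,X^c,\{N+1,\ldots,N+L\},\{N+L+1,\ldots,2N\}$ bijectively onto $Y,\{N+1,\ldots,N+L\},Y^c,\{N+L+1,\ldots,2N\}$, and $\sigma$ is recovered from $\sigma'$ by restricting to $\{i\leq N:\sigma'(i)\leq N\}$. Your added remark that the map is only a set-theoretic injection, not a semigroup morphism (witnessed by $(\emptyset')^2=\mathrm{id}$), is accurate and consistent with the paper's own later phrasing "set-theoretical embedding" in Proposition 3.2 (4).
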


Let us discuss now some probabilistic aspects. We denote by $\kappa:\widetilde{S}_N\to\mathbb N$ the cardinality of the domain/range, and by $\chi:\widetilde{S}_N\to\mathbb N$ be the number of fixed points. Observe that we have $\kappa=\sum_{ij}u_{ij}$ and $\chi=\sum_iu_{ii}$. These quantities are in fact of similar nature:

\begin{proposition}
The embedding $\widetilde{S}_N\subset S_{2N}$ in Proposition 1.4 makes correspond the variables $\chi,\kappa:\widetilde{S}_N\to\mathbb N$ to the variables $\chi_{left},\chi_{right}:S_{2N}\to\mathbb N$, counting the fixed points among $\{1,\ldots,N\}$ and among $\{N+1,\ldots,2N\}$, respectively.
\end{proposition}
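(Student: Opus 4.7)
The proof is essentially a direct inspection of the four pieces of the definition of $\sigma'$ given in Proposition 1.4, so the plan is to carry out this case analysis carefully and extract the count of fixed points in each of the two halves.

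First I would fix a partial permutation $\sigma:X\simeq Y$, write $X^c=\{x_1<\ldots<x_L\}$ and $Y^c=\{y_1<\ldots<y_L\}$, and set $L=|X^c|=|Y^c|$, so that $\kappa(\sigma)=|X|=N-L$. To compute $\chi_{left}(\sigma')$ I would examine each $i\in\{1,\ldots,N\}$: if $i\in X$ then $\sigma'(i)=\sigma(i)$, which lies in $Y\subset\{1,\ldots,N\}$ and equals $i$ exactly when $i$ is a fixed point of $\sigma$; if instead $i=x_r\in X^c$ then $\sigma'(i)=N+r>N$, so $i$ cannot be a fixed point. Summing yields $\chi_{left}(\sigma')=|\{i\in X:\sigma(i)=i\}|=\chi(\sigma)$.

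For $\chi_{right}(\sigma')$ I would examine each $i\in\{N+1,\ldots,2N\}$: if $i=N+r$ with $1\le r\le L$ then $\sigma'(i)=y_r\le N<i$, so no fixed point arises; if instead $i>N+L$ then $\sigma'(i)=i$ by definition, so every such $i$ is a fixed point. The number of indices in $\{N+L+1,\ldots,2N\}$ is $N-L=|X|=\kappa(\sigma)$, giving $\chi_{right}(\sigma')=\kappa(\sigma)$.

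There is no real obstacle here; the only thing one must be careful about is bookkeeping, namely that the ``shifted'' block $\{N+1,\ldots,N+L\}$ and its image $Y^c$ live on opposite sides of the cut at $N$, which is what guarantees that these auxiliary indices never contribute fixed points and that the trivial tail $\{N+L+1,\ldots,2N\}$ carries exactly the $\kappa(\sigma)$ fixed points on the right.
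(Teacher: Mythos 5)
Your proof is correct and follows essentially the same route as the paper's: a direct case analysis of the four clauses defining $\sigma'$, showing that left fixed points are exactly the fixed points of $\sigma$ in $X$ and that right fixed points are exactly the indices $i>N+L$, of which there are $N-L=|X|=\kappa(\sigma)$. Your write-up is in fact more careful than the paper's, which states this count with a typo ($N-2$ where $N-L$ is meant).
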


\begin{proof}
By using the formula of $\sigma\to\sigma'$ from Proposition 1.4, we obtain:
\begin{eqnarray*}
\chi_{left}(\sigma')&=&\#\{i\leq N|\sigma'(i)=i\}=\{i\in X|\sigma(i)=i\}=\chi(\sigma)\\
\chi_{right}(\sigma')&=&\#\{i>N|\sigma'(i)=i\}=\{i>N+L\}=N-2=\kappa(\sigma)
\end{eqnarray*}

Thus we have obtained the formulae in the statement, and we are done.
\end{proof}

More generally, given a number $l\leq N$, we denote by $\chi_l:\widetilde{S}_N\to\mathbb N$ the number of fixed points among $\{1,\ldots,l\}$. Observe that $\widetilde{S}_N\subset S_{2N}$ maps in fact $\chi_l\to\chi_l$, for any $l$.

Generally speaking, we are interested in the joint law of $(\chi_l,\kappa)$. There are many interesting questions here, and as a main result on this subject, we have:

\begin{theorem}
The measures $\mu_k^l=law(\chi_l|\kappa=k)$ are given by
$$\mu_k^l=\sum_{q\geq0}\binom{k}{q}\binom{l}{q}\binom{N}{q}^{-2}\frac{(\delta_1-\delta_0)^{*q}}{q!}$$
and become Poisson ($st)$ in the $k=sN,l=tN,N\to\infty$ limit.
\end{theorem}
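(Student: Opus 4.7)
The plan is to compute $\mu_k^l$ via an inclusion-exclusion argument over the possible sets of forced fixed points of $\sigma$, followed by a termwise passage to the limit.

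First I would count: the total number of partial permutations with $\kappa=k$ is $\binom{N}{k}^2k!$, and the number of $\sigma\in\widetilde{S}_N$ with $\kappa(\sigma)=k$ having a prescribed $q$-element set $F\subset\{1,\ldots,l\}$ inside their fixed-point set is $\binom{N-q}{k-q}^2(k-q)!$, because $F$ must sit inside both the domain and the range, and so it remains to complete $F$ to a $k$-subset $X$ of $\{1,\ldots,N\}$, independently to a $k$-subset $Y$, and to pick a bijection between the complementary parts. Summing over the $\binom{l}{q}$ choices of $F$ and dividing by the total yields the binomial moment
\begin{eqnarray*}
E\left(\binom{\chi_l}{q}\Big|\kappa=k\right)&=&\binom{l}{q}\cdot\frac{\binom{N-q}{k-q}^2(k-q)!}{\binom{N}{k}^2k!}=\frac{\binom{l}{q}\binom{k}{q}}{q!\binom{N}{q}^2},
\end{eqnarray*}
the last equality using the identity $\binom{N-q}{k-q}/\binom{N}{k}=\binom{k}{q}/\binom{N}{q}$.

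Standard inclusion-exclusion inversion then gives
\begin{eqnarray*}
P(\chi_l=r\,|\,\kappa=k)&=&\sum_{q\geq r}(-1)^{q-r}\binom{q}{r}\cdot\frac{\binom{l}{q}\binom{k}{q}}{q!\binom{N}{q}^2},
\end{eqnarray*}
which is precisely the mass at $r$ of the measure in the statement, once one expands $(\delta_1-\delta_0)^{*q}=\sum_r\binom{q}{r}(-1)^{q-r}\delta_r$. This settles the first assertion.

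For the asymptotic statement, with $k=sN$, $l=tN$ and $N\to\infty$, each coefficient satisfies $\binom{k}{q}\binom{l}{q}/\binom{N}{q}^2\to(st)^q$ for every fixed $q$, while the trivial bound $\binom{k}{q}\binom{l}{q}/\binom{N}{q}^2\leq 1$ together with the factor $1/q!$ provides dominated convergence inside the mass-at-$r$ series. The limit measure is therefore $\sum_{q\geq 0}(st)^q(\delta_1-\delta_0)^{*q}/q!=\exp_*(st(\delta_1-\delta_0))$, which by a direct expansion equals the Poisson law of parameter $st$. I expect the main obstacle to be simply the clean bookkeeping in the first step; once the binomial moments are in hand, both the exact formula and its Poisson limit follow essentially formally.
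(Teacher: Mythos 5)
Your proposal is correct, and it reorganizes the argument in a way that differs from the paper's proof in two respects. The paper computes the point masses $P(\chi_l=p\,|\,\kappa=k)$ directly: it chooses the $p$ fixed points, counts the remaining partial permutations with no fixed points among $\{1,\ldots,l-p\}$ by inclusion-exclusion, and then has to perform a reindexing ($p=q-r$) to regroup the double sum into the stated form $\sum_q c_q(\delta_1-\delta_0)^{*q}/q!$. You instead compute the binomial moments $E\bigl(\binom{\chi_l}{q}\big|\kappa=k\bigr)$ by a clean double count over pairs $(F,\sigma)$ with $F\subset{\rm Fix}(\sigma)$, which hands you the coefficients $c_q/q!=\binom{k}{q}\binom{l}{q}\binom{N}{q}^{-2}/q!$ in one step; the stated formula for $\mu_k^l$ is then literally the binomial inversion of these moments, with no resummation needed. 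Your counting $\binom{N-q}{k-q}^2(k-q)!$ and the ratio identity $\binom{N-q}{k-q}/\binom{N}{k}=\binom{k}{q}/\binom{N}{q}$ check out, and the inversion is legitimate since $\chi_l$ has finite support. For the asymptotics, the paper passes through the Fourier transform $\mathcal F(\mu_k^l)(y)\simeq e^{st(e^y-1)}$, whereas you argue termwise with the domination $\binom{k}{q}\binom{l}{q}\binom{N}{q}^{-2}\leq1$ and identify the limit as $\exp_*(st(\delta_1-\delta_0))=e^{-st}\exp_*(st\,\delta_1)$, i.e.\ Poisson $(st)$; both are valid, and yours has the small advantage of making the convergence of the individual masses explicit. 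In short, the combinatorial content is the same inclusion-exclusion, but your moments-first bookkeeping is tidier and makes the appearance of the compound-Poisson exponential $\exp_*$ structurally transparent.
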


\begin{proof}
Observe first that at $k=l=N$ this corresponds to the well-known fact that the number of fixed points $\chi:S_N\to\mathbb N$ becomes Poisson (1), in the $N\to\infty$ limit. 

More generally, at $k=N$ this corresponds to the fact that the truncated character $\chi_l:S_N\to\mathbb N$ becomes Poisson ($t$), in the $l=tN\to\infty$ limit. See \cite{bc2}.

In general, we can use the same method, namely the inclusion-exclusion principle. If we set $\widetilde{S}_N^{(k)}=\{\sigma\in\widetilde{S}_N|\kappa(\sigma)=k\}$, the formula is as follows:
\begin{eqnarray*}
P(\chi_l=p|\kappa=k)
&=&\frac{1}{|\widetilde{S}_N^{(k)}|}\binom{l}{p}\#\left\{\sigma\in\widetilde{S}_{N-p}^{(k-p)}\Big|\sigma(i)\neq i,\forall i\leq l-p\right\}\\
&=&\frac{1}{|\widetilde{S}_N^{(k)}|}\binom{l}{p}\sum_{r\geq0}(-1)^r\binom{l-p}{r}\left|\widetilde{S}_{N-p-r}^{(k-p-r)}\right|
\end{eqnarray*}

Here the index $r$, which counts the fixed points among $\{1,\ldots,l-p\}$, runs a priori up to $\min(k,l)-p$. However, since the binomial coefficient or the cardinality of the set on the right vanishes by definition at $r>\min(k,l)-p$, we can sum over $r\geq0$.

Now by using $|\widetilde{S}_N^{(k)}|=k!\binom{N}{k}^2$, and then by cancelling various factorials, and grouping back into binomial coeffiecients, we obtain the following formula:
\begin{eqnarray*}
P(\chi_l=p|\kappa=k)
&=&\frac{1}{k!\binom{N}{k}^2}\binom{l}{p}\sum_{r\geq0}(-1)^r\binom{l-p}{r}(k-p-r)!\binom{N-p-r}{k-p-r}^2\\
&=&\sum_{r\geq0}\frac{(-1)^r}{p!r!}\binom{k}{p+r}\binom{l}{p+r}\binom{N}{p+r}^{-2}
\end{eqnarray*}

We can now compute the measure itself. With $p=q-r$, we obtain:
\begin{eqnarray*}
law(\chi_l|\kappa=k)
&=&\sum_{p\geq0}\sum_{r\geq0}\frac{(-1)^r}{p!r!}\binom{k}{p+r}\binom{l}{p+r}\binom{N}{p+r}^{-2}\,\delta_p\\
&=&\sum_{q\geq0}\sum_{r\geq0}\frac{(-1)^r}{(q-r)!r!}\binom{k}{q}\binom{l}{q}\binom{N}{q}^{-2}\,\delta_{q-r}\\
&=&\sum_{q\geq0}\binom{k}{q}\binom{l}{q}\binom{N}{q}^{-2}\cdot\frac{1}{q!}\sum_{r\geq0}(-1)^r\binom{q}{r}\delta_{q-r}
\end{eqnarray*}

The sum at right being $(\delta_1-\delta_0)^{*q}$, this gives the formula in the statement.

Regarding now the asymptotics, in the regime $k=sN,l=tN,N\to\infty$ from the statement, the coefficient of $(\delta_1-\delta_0)^{*q}/q!$ in the formula of $\mu_k^l$ is:
$$c_q=\binom{k}{q}\binom{l}{q}\binom{N}{q}^{-2}=\frac{\binom{k}{q}}{\binom{N}{q}}\cdot\frac{\binom{l}{q}}{\binom{N}{q}}\simeq\left(\frac{k}{N}\right)^q\left(\frac{l}{N}\right)^q=(st)^q$$

We deduce that the Fourier transform of $\mu_k^l$ is given by:
$$\mathcal F(\mu_k^l)(y)\simeq\sum_{q\geq0}(st)^q\frac{(e^y-1)^q}{q!}=e^{st(e^y-1)}$$

But this is the Fourier transform of Poisson ($st$), and we are done.
\end{proof}

Observe that the formula in Theorem 1.6 shows that we have $\mu_k^l=\mu_l^k$. This is an interesting equality, which seems to be quite unobvious to prove, with bare hands.

\section{Quantum permutations}

In this section we review some material from \cite{bsk}, where $\widetilde{S}_N^+$ was constructed, and we confirm that $\widetilde{S}_N\to\widetilde{S}_N^+$ is indeed a liberation, with a free version of Theorem 1.6. 

We use the formalism of compact matrix quantum groups, developed by Woronowicz in \cite{wo1}, \cite{wo2}. For a detailed presentation here, see  \cite{ntu}. We assume in addition that the square of the antipode is the identity, $S^2=id$. In short, we use the ``minimal'' formalism covering the compact Lie groups, and the duals of finitely generated discrete groups.

We recall that a square matrix $u=(u_{ij})$ is called ``magic'' if its entries are projections ($p=p^2=p^*$), which sum up to 1 on each row and column. The basic example is provided by the matrix coordinates $u_{ij}:S_N\subset O_N\to\mathbb R$, given by $u_{ij}(\sigma)=\delta_{i\sigma(j)}$.

The following key definition is due to Wang \cite{wa2}:

\begin{definition}
$C(S_N^+)$ is the universal $C^*$-algebra generated by the entries of a $N\times N$ magic matrix $u$, with comultiplication $\Delta(u_{ij})=\sum_ku_{ik}\otimes u_{kj}$, counit $\varepsilon(u_{ij})=\delta_{ij}$ and antipode $S(u_{ij})=u_{ji}$.
\end{definition}

This algebra satisfies the axioms in \cite{wo1}, \cite{wo2}, so the underlying noncommutative space $S_N^+$ is a compact quantum group, called quantum permutation group. The canonical embedding $S_N\subset S_N^+$ is an isomorphism at $N=1,2,3$, but not at $N\geq4$. See \cite{wa2}.

Let us go back now to the embedding $u:\widetilde{S}_N\subset M_N(0,1)$ in Proposition 1.3. Due to the formula $u_{ij}(\sigma)=\delta_{i\sigma(j)}$, the matrix $u=(u_{ij})$ is ``submagic'', in the sense that its entries are projections, which are pairwise orthogonal on each row and column. 

This suggests the following definition, given in \cite{bsk}:

\begin{definition}
$C(\widetilde{S}_N^+)$ is the universal $C^*$-algebra generated by the entries of a $N\times N$ submagic matrix $u$, with comultiplication $\Delta(u_{ij})=\sum_ku_{ik}\otimes u_{kj}$ and counit $\varepsilon(u_{ij})=\delta_{ij}$. 
\end{definition}

The bialgebra structure of $C(\widetilde{S}_N^+)$ tells us that the underlying noncommutative space $\widetilde{S}_N^+$ is a compact quantum semigroup. This semigroup is of quite special type, because $C(\widetilde{S}_N^+)$ has as well a ``subantipode'' map, defined by $S(u_{ij})=u_{ji}$. See \cite{bsk}.

Observe that $\Delta,\varepsilon,S$ restrict to $C(\widetilde{S}_N)$, and correspond there, via Gelfand duality, to the usual multiplication, unit element, and subinversion map of $\widetilde{S}_N$.

The basic properties of $\widetilde{S}_N^+$ can be summarized as follows:

\begin{proposition}
We have maps as follows
$$\begin{matrix}
C(\widetilde{S}_N^+)&\to&C(S_N^+)\\
\\
\downarrow&&\downarrow\\
\\
C(\widetilde{S}_N)&\to&C(S_N)
\end{matrix}
\quad \quad \quad:\quad \quad\quad
\begin{matrix}
\widetilde{S}_N^+&\supset&S_N^+\\
\\
\cup&&\cup\\
\\
\widetilde{S}_N&\supset&S_N
\end{matrix}$$
with the bialgebras at left corresponding to the quantum semigroups at right.
\end{proposition}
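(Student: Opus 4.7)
The strategy is to construct each of the four morphisms using the universal property of its source algebra, and then to verify that the defining comultiplication and counit formulas are automatically preserved. For each arrow I will need to exhibit, inside the target algebra, a matrix of elements satisfying the relations imposed on the source.

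For the top horizontal arrow $C(\widetilde{S}_N^+)\to C(S_N^+)$, the crucial observation is that a magic matrix is automatically submagic: if $p_1,\ldots,p_N$ are projections with $\sum_i p_i=1$, then pairwise orthogonality $p_ip_j=0$ for $i\neq j$ follows by expanding $p_i=p_i\cdot 1=p_i^2+\sum_{j\neq i}p_ip_j$ and using positivity. So the magic generators of $C(S_N^+)$ satisfy the submagic relations, and the universal property of $C(\widetilde{S}_N^+)$ produces the morphism. The right vertical arrow $C(S_N^+)\to C(S_N)$ is Wang's standard abelianization map from \cite{wa2}. The bottom horizontal arrow $C(\widetilde{S}_N)\to C(S_N)$ is the pullback along the inclusion $S_N\subset\widetilde{S}_N$. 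For the remaining left vertical arrow $C(\widetilde{S}_N^+)\to C(\widetilde{S}_N)$, I would invoke Proposition 1.3: the matrix coordinates $u_{ij}:\widetilde{S}_N\to\{0,1\}$ are projections in the commutative algebra $C(\widetilde{S}_N)$, and the ``at most one nonzero entry per row and column'' condition translates precisely to pairwise orthogonality along rows and columns, so these generators satisfy the submagic relations.

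Having the four maps in hand, I would check that they preserve the bialgebra structure. Since every map acts as $u_{ij}\mapsto u_{ij}$ on the canonical generators, and since $\Delta$ and $\varepsilon$ are defined by the same formulas $\Delta(u_{ij})=\sum_k u_{ik}\otimes u_{kj}$ and $\varepsilon(u_{ij})=\delta_{ij}$ in all four algebras, the intertwining is automatic. Commutativity of the square of algebra maps is likewise immediate, since each composition still acts as $u_{ij}\mapsto u_{ij}$. Dualizing by reversing arrows then produces the claimed diagram of inclusions of compact quantum semigroups; the subantipode of $C(\widetilde{S}_N^+)$ further restricts to the honest antipode of $C(S_N^+)$ via the same formula $u_{ij}\mapsto u_{ji}$, matching the fact that $S_N^+\subset\widetilde{S}_N^+$ and $S_N\subset\widetilde{S}_N$ are inclusions of genuine quantum groups inside quantum semigroups.

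The only non-bookkeeping input is the ``magic implies submagic'' observation; beyond this, the whole proposition is a formal consequence of universal properties, so I do not anticipate any genuine obstacle.
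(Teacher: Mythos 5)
Your proposal is correct and follows essentially the same route as the paper, whose entire proof is the one-line observation that projections summing to $1$ are pairwise orthogonal (so magic implies submagic), with everything else declared clear from the preceding discussion. You have merely spelled out the universal-property bookkeeping that the paper leaves implicit.
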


\begin{proof}
This is clear from the above discussion, and from the well-known fact that projections which sum up to $1$ are pairwise orthogonal. See \cite{bsk}. 
\end{proof}

As a first example, we have $\widetilde{S}_1^+=\widetilde{S}_1$. At $N=2$ now, recall that the algebra generated by two free projections $p,q$ is isomorphic to the group algebra of $D_\infty=\mathbb Z_2*\mathbb Z_2$. We denote by $\varepsilon:C^*(D_\infty)\to\mathbb C1$ the counit, $\varepsilon(1)=1$ and $\varepsilon(\ldots pqpq\ldots)=0$. We have:

\begin{proposition}
We have $C(\widetilde{S}_2^+)\simeq\{(x,y)\in C^*(D_\infty)\oplus C^*(D_\infty)|\varepsilon(x)=\varepsilon(y)\}$, via
$$u=\begin{pmatrix}p\oplus 0&0\oplus r\\0\oplus s&q\oplus 0\end{pmatrix}$$ where $p,q$ and $r,s$ are the standard generators of the two copies of $C^*(D_\infty)$.
\end{proposition}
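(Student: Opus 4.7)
The plan is to exhibit mutually inverse unital $*$-homomorphisms between $C(\widetilde{S}_2^+)$ and the algebra $C:=\{(x,y)\in C^*(D_\infty)\oplus C^*(D_\infty):\varepsilon(x)=\varepsilon(y)\}$. The forward map $\Phi$ is dictated by the formula in the statement; the real content is building the inverse $\Psi$.

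First I would verify that the four matrix entries displayed form a submagic matrix. Each is a projection, and every one of the submagic orthogonalities $u_{11}u_{12}=u_{11}u_{21}=u_{12}u_{22}=u_{21}u_{22}=0$ is immediate, because in each such product one factor is supported on the first copy of $C^*(D_\infty)$ and the other on the second. By Definition 1.8 this gives a unital $*$-homomorphism $\Phi:C(\widetilde{S}_2^+)\to C^*(D_\infty)\oplus C^*(D_\infty)$; the image lies in $C$ because $\varepsilon$ vanishes on the alternating generators $p,q,r,s$ while $\varepsilon(1)=1$.

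For $\Psi$, the universal property of $C^*(D_\infty)$ as the unital $C^*$-algebra on two projections yields unital $*$-homomorphisms $\alpha_1,\alpha_2:C^*(D_\infty)\to C(\widetilde{S}_2^+)$ sending $(p,q)$ to $(u_{11},u_{22})$ and to $(u_{12},u_{21})$ respectively. Each $x\in C^*(D_\infty)$ decomposes uniquely as $x=x_0+\varepsilon(x)\cdot 1$ with $x_0\in\ker\varepsilon$, so for $(x,y)\in C$ I would set
\[
\Psi(x,y)\;=\;\alpha_1(x_0)+\alpha_2(y_0)+\lambda\cdot 1,\qquad \lambda:=\varepsilon(x)=\varepsilon(y).
\]
This is plainly linear and $*$-preserving; the serious point is multiplicativity. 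Expanding $\Psi(x,y)\Psi(x',y')$ leaves cross terms $\alpha_1(x_0)\alpha_2(y_0')$ and $\alpha_2(y_0)\alpha_1(x_0')$ which must vanish, and this is the main obstacle of the proof. I would reduce it to the submagic relations in two steps: first, identify $\ker\varepsilon$ with the closed non-unital $*$-subalgebra $N$ generated by $p,q$, using that $N\subseteq\ker\varepsilon$ is trivial and that both have codimension one in $C^*(D_\infty)$ ($N$ has codimension $\le 1$ since $N+\mathbb C\cdot 1$ is closed and contains all polynomials in $p,q$, and codimension $\ge 1$ since $1\notin N$); second, transport along the $\alpha_i$ so that the required cross-orthogonality becomes $u_{ii}u_{jk}=u_{jk}u_{ii}=0$ for $i\in\{1,2\}$ and $(j,k)\in\{(1,2),(2,1)\}$, which is exactly the content of the four submagic relations at $N=2$.

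Once multiplicativity is in hand, verifying $\Phi\Psi=\mathrm{id}_C$ and $\Psi\Phi=\mathrm{id}$ is routine on generators: $\Psi(p,0)=\alpha_1(p)=u_{11}$ and its three analogues, $\Psi(1,1)=1$, and $\Phi\Psi(x_0,0)=(x_0,0)$ for $x_0\in\ker\varepsilon$ since $\Phi\circ\alpha_1$ is the inclusion into the first summand. Conceptually the whole argument rests on the observation that the submagic relations at $N=2$ split the four generators into two non-interacting pairs $\{u_{11},u_{22}\}$ and $\{u_{12},u_{21}\}$, a coincidence which breaks for $N\ge 3$ and explains why $\widetilde{S}_2^+$ admits such a clean pullback description over $\mathbb C$.
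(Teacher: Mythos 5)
Your proof is correct and follows essentially the same route as the paper's: both rest on the observation that the $N=2$ submagic relations decouple $\{u_{11},u_{22}\}$ from $\{u_{12},u_{21}\}$ with no relations inside either pair, combined with the decomposition $C^*(D_\infty)=\mathbb{C}1\oplus\ker\varepsilon$, where $\ker\varepsilon$ is the universal non-unital $C^*$-algebra on two projections. You simply make explicit the pair of mutually inverse $*$-homomorphisms that the paper's terse identification $C(\widetilde{S}_2^+)\simeq\mathbb{C}1\oplus Z\oplus Z$ leaves implicit.
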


\begin{proof}
Consider an arbitrary $2\times 2$ matrix formed by projections:
$$u=\begin{pmatrix}P&R\\S&Q\end{pmatrix}$$

This matrix is submagic when $PR=PS=QR=QS=0$, which means that the non-unital algebras $X=<P,Q>$ and $Y=<R,S>$ must commute, and must satisfy $xy=0$, for any $x\in X,y\in Y$. Thus $C(\widetilde{S}_2^+)\simeq\mathbb C1\oplus Z\oplus Z$, where $Z$ is the universal non-unital algebra generated by two projections. Since $C^*(D_\infty)=\mathbb C1\oplus Z$, we obtain an isomorphism $C(\widetilde{S}_2^+)\simeq\{(\lambda+a,\lambda+b)|\lambda\in\mathbb C, a,b\in Z\}$, as in the statement. See \cite{bsk}.
\end{proof}

The above result shows that the structure of $\widetilde{S}_2^+$ is quite non-trivial. Some further interpretations of Proposition 2.4, based on the work in \cite{ss1}, can be found in \cite{bsk}.

Let us now review the results in section 1. Proposition 1.2 has no free analogue, because $\widetilde{S}_N^+$ is infinite. Regarding Proposition 1.4 and Proposition 1.5, we first have:

\begin{proposition}
The following two elements of $C(\widetilde{S}_N^+)$ are self-adjoint,
$$\chi=\sum_iu_{ii},\qquad\kappa=\sum_{ij}u_{ij}$$
satisfy $0\leq\chi,\kappa\leq N$, and coincide with the usual $\chi,\kappa$ on the quotient $C(\widetilde{S}_N)$.
\end{proposition}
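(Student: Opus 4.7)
The plan is to verify the three assertions in the order listed, with everything following directly from the submagic axiom (entries are projections, pairwise orthogonal on each row and column).

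For self-adjointness, I would simply note that by definition each $u_{ij}\in C(\widetilde{S}_N^+)$ is a projection, hence $u_{ij}^*=u_{ij}$, and both $\chi$ and $\kappa$ are finite sums of such projections. The positivity $\chi,\kappa\geq 0$ is then immediate from the same observation, since projections are positive elements in a $C^*$-algebra.

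For the upper bound on $\kappa$, this is the one place where the submagic relations actually do work. Row orthogonality tells us that for each fixed $i$, the projections $u_{i1},\ldots,u_{iN}$ are pairwise orthogonal, so the sum $\sum_j u_{ij}$ is itself a projection, hence bounded by $1$. Summing over $i$ then gives
$$\kappa=\sum_{i=1}^N\sum_{j=1}^N u_{ij}\leq\sum_{i=1}^N 1=N.$$
For the upper bound on $\chi$, the diagonal entries $u_{11},\ldots,u_{NN}$ sit in different rows and different columns, so we cannot invoke submagic orthogonality; instead we just use the trivial estimate $u_{ii}\leq 1$ (each $u_{ii}$ being a projection) and add, obtaining $\chi\leq N$.

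Finally, for the matching with the classical quantities, I would pass to the abelian quotient $C(\widetilde{S}_N)$ and recall from Proposition 1.3 that $u_{ij}(\sigma)=1$ precisely when $\sigma(j)=i$, and is $0$ otherwise. Then
$$\chi(\sigma)=\sum_{i}u_{ii}(\sigma)=\#\{i\mid \sigma(i)=i\},\qquad \kappa(\sigma)=\sum_{ij}u_{ij}(\sigma)=\#\{j\mid j\in\mathrm{dom}(\sigma)\},$$
which are exactly the numbers of fixed points, respectively the size of the domain, as defined before Proposition 1.5. There is no real obstacle here; the only step that uses something beyond the definition is the $\kappa\leq N$ bound, which rests squarely on row orthogonality.
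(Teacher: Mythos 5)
Your proof is correct, and it is simply a fleshed-out version of what the paper does: the paper's own proof consists of the single remark that all assertions are clear from the definitions, with the inequalities understood in the operator-theoretic sense. Your verifications (sums of projections are self-adjoint and positive; row orthogonality makes each $\sum_j u_{ij}$ a projection, giving $\kappa\leq N$; the trivial bound $u_{ii}\leq 1$ for $\chi$; and evaluation $u_{ij}(\sigma)=\delta_{i\sigma(j)}$ on the abelian quotient) are exactly the routine details being elided.
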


\begin{proof}
All the assertions are clear from definitions, with the inequalities $0\leq\chi,\kappa\leq N$ being taken of course in an operator-theoretic sense.
\end{proof}

With this observation in hand, if we denote by $v=(v_{ij})$ the magic unitary for $S_{2N}$, the formulae in Proposition 1.5 tell us that the surjection $C(S_{2N})\to C(\widetilde{S}_N)$ maps:
\begin{eqnarray*}
v_{11}+\ldots+v_{NN}&\to&\chi\\
v_{N+1,N+1}+\ldots+v_{2N,2N}&\to&\kappa
\end{eqnarray*}

Regarding now a free analogue $C(S_{2N}^+)\to C(\widetilde{S}_N^+)$, while at $N=2$ one can probably use Proposition 2.4 above, at $N=3$ we believe that the answer is negative. See \cite{bsk}.

Let us look now at Theorem 1.6. Since $C(\widetilde{S}_N^+)$ has no integration functional, we cannot speak about the joint law of $(\chi,\kappa)$. Thus, we need an alternative approach to $\mu_k^l$. 

For this purpose, we use the following simple observation:

\begin{proposition}
Any partial permutation $\sigma:X\simeq Y$ can be factorized as
$$\xymatrix{X\ar[r]^{\sigma}\ar[d]_\gamma&Y\\\{1,\ldots,k\}\ar[r]_\beta&\{1,\ldots,k\}\ar[u]_\alpha}$$
with $\alpha,\beta,\gamma\in S_k$ being certain (non-unique) permutations, where $k=\kappa(\sigma)$.
\end{proposition}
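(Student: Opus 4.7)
The statement is essentially a bookkeeping fact, so the plan is to construct the factorization explicitly and then check commutativity.

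First I would fix, once and for all, canonical identifications of $X$ and $Y$ with $\{1,\ldots,k\}$: writing $X=\{x_1<\ldots<x_k\}$ and $Y=\{y_1<\ldots<y_k\}$, let $\iota_X\colon\{1,\ldots,k\}\to X$, $i\mapsto x_i$, and similarly $\iota_Y\colon\{1,\ldots,k\}\to Y$. Then any bijection from $X$ (resp.\ to $Y$) corresponds to an element of $S_k$ via pre/post-composition with $\iota_X,\iota_Y$, which is what gives sense to the claim $\alpha,\beta,\gamma\in S_k$.

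Next I would construct the three permutations. Pick any $\gamma\in S_k$, and use it to define the bijection $X\to\{1,\ldots,k\}$ as $\gamma\circ\iota_X^{-1}$; pick any $\alpha\in S_k$, and define the bijection $\{1,\ldots,k\}\to Y$ as $\iota_Y\circ\alpha$. There is then a unique $\beta\in S_k$ making the diagram commute, namely
$$\beta\ =\ \alpha^{-1}\circ\iota_Y^{-1}\circ\sigma\circ\iota_X\circ\gamma^{-1},$$
which is a bijection $\{1,\ldots,k\}\to\{1,\ldots,k\}$ as a composition of bijections, since $\sigma\colon X\simeq Y$ is one by assumption. By construction, $\iota_Y\circ\alpha\circ\beta\circ\gamma\circ\iota_X^{-1}=\sigma$, which is exactly the factorization asserted by the diagram.

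Finally I would note the non-uniqueness: $\alpha$ and $\gamma$ may be chosen freely in $S_k$, and each such choice forces $\beta$ uniquely through the displayed formula. There is really no obstacle here; the only subtle point is to be explicit about how bijections between $X,Y$ and $\{1,\ldots,k\}$ are converted into genuine elements of $S_k$, which is why I would insert the fixed reference bijections $\iota_X,\iota_Y$ at the start.
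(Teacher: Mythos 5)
Your proof is correct and follows essentially the same route as the paper: choose the two outer bijections arbitrarily and observe that $\beta$ is then forced by composition, with non-uniqueness coming from the free choice of $\alpha,\gamma$. The only cosmetic difference is that the paper completes $\alpha,\gamma$ to permutations in $S_N$ (which is what is actually used in the subsequent Proposition on the map $S_N\times S_k\times S_N\to\widetilde{S}_N$), whereas you encode them as elements of $S_k$ via fixed reference bijections $\iota_X,\iota_Y$; both readings are fine for the statement as written.
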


\begin{proof}
Since we have $|X|=|Y|=k$, we can choose any two bijections $X\simeq\{1,\ldots,k\}$ and $\{1,\ldots,k\}\simeq Y$, and then complete them up to permutations $\gamma,\alpha\in S_N$. The remaining permutation $\beta\in S_k$ is then uniquely determined by the formula $\sigma=\alpha\beta\gamma$.
\end{proof}

We can now formulate an alternative definition for the measures $\mu_k^l$. We fix $k\leq N$, and we denote by $p,q,r$ the magic matrices for $S_N,S_k,S_N$. We have:

\begin{proposition}
Consider the map $\varphi:S_N\times S_k\times S_N\to\widetilde{S}_N$, sending $(\alpha,\beta,\gamma)$ to the partial permutation $\sigma:\gamma^{-1}\{1,\ldots,k\}\simeq\alpha\{1,\ldots,k\}$ given by $\sigma(\gamma^{-1}(t))=\alpha(\beta(t))$. 
\begin{enumerate}
\item The image of $\varphi$ is the set $\widetilde{S}_N^{(k)}=\{\sigma\in\widetilde{S}_N|\kappa(\sigma)=k\}$.

\item The transpose of $\varphi$ is given by $\varphi^*(u_{ij})=\sum_{s,t\leq k}p_{is}\otimes q_{st}\otimes r_{tj}$.

\item $\mu_k^l$ equals the law of the variable $\varphi^*(\chi_l)\in C(S_N\times S_k\times S_N)$.
\end{enumerate}
\end{proposition}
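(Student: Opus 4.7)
The three assertions are essentially three independent unpackings of the map $\varphi$, and I would handle them in order. For (1), the inclusion $\mathrm{Im}(\varphi)\subseteq\widetilde{S}_N^{(k)}$ is immediate from the definition of $\varphi$, since the domain $\gamma^{-1}\{1,\ldots,k\}$ and range $\alpha\{1,\ldots,k\}$ each have cardinality $k$. The reverse inclusion is exactly the content of Proposition 1.7, applied with this value of $k$: any $\sigma\in\widetilde{S}_N^{(k)}$ factors as $\sigma=\alpha\beta\gamma$ in the required sense.

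For (2), I would evaluate both sides pointwise on a triple $(\alpha,\beta,\gamma)$ and match them. Set $\sigma=\varphi(\alpha,\beta,\gamma)$. Then $u_{ij}(\sigma)=1$ precisely when $j\in\gamma^{-1}\{1,\ldots,k\}$, say $\gamma(j)=t$ with $t\leq k$, and $\alpha(\beta(t))=i$, that is $\beta(t)=s$ and $\alpha(s)=i$ with $s\leq k$. Writing this condition as a sum of products of Kronecker symbols gives $u_{ij}(\sigma)=\sum_{s,t\leq k}[\alpha(s)=i][\beta(t)=s][\gamma(j)=t]$, which is exactly $\sum_{s,t\leq k}p_{is}(\alpha)q_{st}(\beta)r_{tj}(\gamma)$. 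Dualizing yields the announced formula for $\varphi^*(u_{ij})$.

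For (3), the point is that $\mu_k^l$ is by definition the law of $\chi_l$ under the uniform probability on $\widetilde{S}_N^{(k)}$, while the law of $\varphi^*(\chi_l)=\chi_l\circ\varphi$ is computed with respect to the uniform (Haar) probability on $S_N\times S_k\times S_N$. So I need to show that the pushforward $\varphi_*$ of this latter measure is uniform on $\widetilde{S}_N^{(k)}$, which by (1) reduces to a fiber-counting argument: for every $\sigma:X\simeq Y$ in $\widetilde{S}_N^{(k)}$, the fiber $\varphi^{-1}(\sigma)$ has cardinality independent of $\sigma$. Here $\gamma$ is any permutation sending $X$ bijectively onto $\{1,\ldots,k\}$ (so $k!(N-k)!$ choices), $\alpha$ is any permutation sending $\{1,\ldots,k\}$ bijectively onto $Y$ (again $k!(N-k)!$ choices), and given $\alpha,\gamma$ the element $\beta\in S_k$ is uniquely forced by $\beta=\alpha^{-1}\sigma\gamma^{-1}|_{\{1,\ldots,k\}}$. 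Thus each fiber has cardinality $(k!(N-k)!)^2$, and the pushforward is uniform.

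The only non-cosmetic step is the fiber count in (3), but even this is routine once one observes that the factorization in Proposition 1.7 completely decouples the freedom in $\alpha$ and $\gamma$ (acting on the complements of $\{1,\ldots,k\}$, $X$ respectively) from the forced value of $\beta$. Combining (1)--(3), we then immediately conclude that $\varphi^*(\chi_l)$ has distribution $\mu_k^l$.
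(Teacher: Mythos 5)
Your proof is correct and follows essentially the same route as the paper's: part (1) via the factorization $\sigma=\alpha\beta\gamma$ (Proposition 2.6 in the paper, which you cite under a different number), part (2) via pointwise evaluation of $u_{ij}$ on $\varphi(\alpha,\beta,\gamma)$, and part (3) via the fact that $\varphi$ pushes the uniform measure on $S_N\times S_k\times S_N$ forward to the uniform measure on $\widetilde{S}_N^{(k)}$. Your explicit fiber count $(k!(N-k)!)^2$ in (3) is accurate and in fact supplies the detail that the paper dismisses with ``the general case is clear as well,'' so it is a welcome strengthening rather than a deviation.
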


\begin{proof}
This is an elementary statement, whose proof goes as follows:

(1) Since $\alpha,\gamma\in S_N$, the domain and range of the associated element $\sigma\in\widetilde{S}_N$ have indeed cardinality $k$. The surjectivity follows from Proposition 2.6 above.

(2) For the element $\sigma\in\widetilde{S}_N$ in the statement, we have:
\begin{eqnarray*}
u_{ij}(\sigma)=1
&\iff&\sigma(j)=i\\
&\iff&\exists t\leq k,\gamma^{-1}(t)=j,\alpha(\beta(t))=i\\
&\iff&\exists s,t\leq k,\gamma^{-1}(t)=j,\beta(t)=s,\alpha(s)=i\\
&\iff&\exists s,t\leq k,r_{tj}(\gamma)=1,q_{st}(\beta)=1,p_{is}(\alpha)=1\\
&\iff&\exists s,t\leq k,(p_{is}\otimes q_{st}\otimes r_{tj})(\alpha,\beta,\gamma)=1
\end{eqnarray*}

Now since the numbers $s,t\leq k$ are uniquely determined by $\alpha,\beta,\gamma,i,j$, if they exist, we conclude that we have the following formula:
$$u_{ij}(\sigma)=\sum_{s,t\leq k}(p_{is}\otimes q_{st}\otimes r_{tj})(\alpha,\beta,\gamma)$$

But this gives the formula in the statement, and we are done.

(3) This comes from the fact that the map $\varphi_k:S_N\times S_k\times S_N\to\widetilde{S}_N^{(k)}$ obtained by restricting the target of $\varphi$ commutes with the normalized (mass one) counting measures. At $k=N$ this follows from the well-known fact that given $(\alpha,\beta,\gamma)\in S_N\times S_N\times S_N$ random, the product $\alpha\beta\gamma\in S_N$ is random, and the general case is clear as well.
\end{proof}

The point now is that we can use the same trick, ``$\sigma=\alpha\beta\gamma$'', in the free case. The precise preliminary statement that we will need is as follows:

\begin{proposition}
Let $p,q,r$ be the magic matrices for $S_N^+,S_k^+,S_N^+$.
\begin{enumerate}
\item The matrix $U_{ij}=\sum_{s,t\leq k}p_{is}\otimes q_{st}\otimes r_{tj}$ is submagic.

\item We have a representation $\pi:C(\widetilde{S}_N^+)\to C(S_N^+\times S_k^+\times S_N^+)$, $\pi(u_{ij})=U_{ij}$.

\item $\pi$ factorizes through the algebra $C(\widetilde{S}_N^{+(k)})=C(\widetilde{S}_N^+)/<\kappa=k>$.

\item At $k=N$, this factorization $\pi_k$ commutes with the Haar functionals.
\end{enumerate}
\end{proposition}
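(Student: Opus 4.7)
The plan is to verify the four parts in order. Each reduces to a brief calculation using the row/column orthogonalities in the magic matrices $p,q,r$, followed by universality for (2), a quotient check for (3), and Haar invariance of the comultiplication for (4).

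For (1), self-adjointness of $U_{ij}$ is immediate from that of its building blocks. For idempotence, I would expand $U_{ij}^2$ as a quadruple sum over $(s,s',t,t')$; row orthogonality of $p$ forces $s=s'$, then row orthogonality of $q$ forces $t=t'$, collapsing the sum back to $U_{ij}$. For $U_{ij}U_{ij'}$ with $j\neq j'$, the same cascade reduces the product to $\sum_{s,t}p_{is}\otimes q_{st}\otimes r_{tj}r_{tj'}$, which vanishes by row orthogonality of $r$. The column condition $U_{ij}U_{i'j}=0$ for $i\neq i'$ is dual, obtained by running the cascade in reverse order starting from column orthogonality of $r$. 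Part (2) is then immediate from the universal property in Definition 2.2.

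For (3), I compute
\begin{equation*}
\pi(\kappa)=\sum_{ij}U_{ij}=\sum_{s,t\leq k}\Bigl(\sum_i p_{is}\Bigr)\otimes q_{st}\otimes\Bigl(\sum_j r_{tj}\Bigr)=\sum_{s,t\leq k}1\otimes q_{st}\otimes 1=k,
\end{equation*}
using the column sums of $p$, the row sums of $r$, and the fact that the $k^2$ entries of the $k\times k$ magic matrix $q$ sum to $k$. Hence $\pi$ annihilates $\kappa-k$ and factors through the quotient $C(\widetilde{S}_N^{+(k)})$.

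For (4), at $k=N$ the relation $\kappa=N$ forces the submagic matrix to be magic: each row sum $R_i=\sum_j u_{ij}$ is a projection (sum of pairwise orthogonal projections), and $\sum_i(1-R_i)=N-\kappa=0$ is a sum of positives, so each $R_i=1$; the column case is analogous. Thus $C(\widetilde{S}_N^{+(N)})=C(S_N^+)$, and in this identification the formula for $\pi_N$ is precisely $(\Delta\otimes\mathrm{id})\Delta(u_{ij})$, i.e.\ iterated comultiplication on $C(S_N^+)$. Two applications of the Haar-invariance identity $(h_N\otimes h_N)\Delta=h_N(\cdot)\,1$ then yield $(h_N\otimes h_N\otimes h_N)\circ\pi_N=h_N$. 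The only mildly delicate pieces of the proof are the multi-index bookkeeping in (1) and the recognition step in (4) that identifies $\pi_N$ with the iterated coproduct; once the latter is in hand, the Haar intertwining is automatic from the general theory of compact quantum groups.
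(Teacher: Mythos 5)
Your proposal is correct. Parts (1)--(3) follow the paper essentially verbatim: the same cascade of row/column orthogonalities collapsing the quadruple sum in (1), universality for (2), and the computation $\sum_{ij}U_{ij}=\sum_{s,t\leq k}1\otimes q_{st}\otimes 1=k$ for (3). Part (4) is where you genuinely diverge. The paper proves the Haar intertwining by a representation-theoretic moment computation: writing $P=Proj(Fix(u^{\otimes n}))$, it expresses $\int U_{i_1j_1}\cdots U_{i_nj_n}$ as $(P^3)_{i_1\ldots i_n,j_1\ldots j_n}$ and uses $P^3=P$ to match it with $\int u_{i_1j_1}\cdots u_{i_nj_n}$. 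You instead first observe that $\kappa=N$ forces the submagic matrix to be magic (via positivity of $\sum_i(1-R_i)=0$), identify $C(\widetilde{S}_N^{+(N)})$ with $C(S_N^+)$ and $\pi_N$ with the iterated coproduct $(\Delta\otimes\mathrm{id})\Delta$, and then conclude by bi-invariance of the Haar state. Both arguments are valid. Yours is more elementary and makes transparent the ``product of Haar-random elements is Haar-random'' heuristic that the paper only alludes to; it also makes explicit the identification $C(\widetilde{S}_N^{+(N)})=C(S_N^+)$, which the paper leaves implicit. What the paper's route buys is the explicit moment formula $\int U_{i_1j_1}\cdots U_{i_nj_n}=P_{i_1\ldots i_n,j_1\ldots j_n}$, which is exactly the form reused in the Weingarten computation of Theorem 2.9, whereas your argument would need to be supplemented by that formula later anyway.
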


\begin{proof}
Once again, this is an elementary statement, whose proof goes at follows:

(1) By using the fact that $p,q,r$ are magic, we obtain:
\begin{eqnarray*}
U_{ij}U_{il}
&=&\sum_{s,t\leq k}\sum_{v,w\leq k}p_{is}p_{iv}\otimes q_{st}q_{vw}\otimes r_{tj}r_{wl}\\
&=&\sum_{s,t\leq k}\sum_{w\leq k}p_{is}\otimes q_{st}q_{sw}\otimes r_{tj}r_{wl}\\
&=&\sum_{s,t\leq k}p_{is}\otimes q_{st}\otimes r_{tj}r_{tl}=\delta_{jl}U_{ij}
\end{eqnarray*}

The proof of $U_{ij}U_{lj}=\delta_{il}U_{ij}$ is similar, and we conclude that $U$ is submagic.

(2) This follows from (1), and from the definition of $C(\widetilde{S}_N^+)$.

(3) By using the fact that $p,q,r$ are magic, we obtain indeed:
$$\sum_{ij}U_{ij}=\sum_{ij}\sum_{s,t\leq k}p_{is}\otimes q_{st}\otimes r_{tj}=\sum_{s,t\leq k}1\otimes q_{st}\otimes 1=k$$

Thus the representation $\pi$ factorizes indeed through the algebra in the statement.

(4) This is a well-known analogue of the fact that ``the product of random permutations is a random permutation'', that we already used in the proof of Proposition 2.7 (3) above. Here is representation theory proof, using \cite{wo1}. With $P=Proj(Fix(u^{\otimes n}))$, we have:
\begin{eqnarray*}
\int_{S_N^+\times S_N^+\times S_N^+}U_{i_1j_1}\ldots U_{i_nj_n}
&=&\sum_{st}\int_{S_N^+}p_{i_1s_1}\ldots p_{i_ns_n}\int_{S_N^+}q_{s_1t_1}\ldots q_{s_nt_n}\int_{S_N^+}r_{t_1j_1}\ldots r_{t_nj_n}\\
&=&\sum_{st}P_{i_1\ldots i_n,s_1\ldots s_n}P_{s_1\ldots s_n,t_1\ldots t_n}P_{t_1\ldots t_n,j_1\ldots j_n}\\
&=&(P^3)_{i_1\ldots i_n,j_1\ldots j_n}=P_{i_1\ldots i_n,j_1\ldots j_n}\\
&=&\int_{S_N^+}u_{i_1j_1}\ldots u_{i_nj_n}
\end{eqnarray*}

Thus $\pi_N$ commutes indeed with the Haar functionals, and we are done.
\end{proof}

Observe that, since $\kappa$ is now continuous, $0\leq\kappa\leq N$, the algebras $C(\widetilde{S}_N^{+(k)})$ constructed in Proposition 2.8 don't sum any longer up to the algebra $C(\widetilde{S}_N^+)$ itself. Thus, in a certain sense, the above measures $\mu_k^l$ encode only a part of the ``probabilistic theory'' of $\widetilde{S}_N^+$.

We can however formulate a free analogue of Theorem 1.6, as follows:

\begin{theorem}
The measures $\mu_k^l=law(\pi_k(\chi_l))$, where $\pi_k$ is defined as
$$\pi_k:C(\widetilde{S}_N^+)\to C(S_N^+\times S_k^+\times S_N^+)\quad:\quad u_{ij}\to\sum_{s,t\leq k}p_{is}\otimes q_{st}\otimes r_{tj}$$
become free Poisson ($st$) in the $k=sN,l=tN,N\to\infty$ limit.
\end{theorem}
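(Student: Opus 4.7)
The plan is to mirror the argument of Theorem 1.6, but at the level of non-crossing partitions rather than all partitions, by exploiting the fact that (via Proposition 2.8 and the Woronowicz construction) the Haar functional on $S_N^+\times S_k^+\times S_N^+$ is the tensor product of three copies of the corresponding Haar states. Expanding the $n$-th moment of $\pi_k(\chi_l)=\sum_{i\le l}\sum_{s,t\le k}p_{is}\otimes q_{st}\otimes r_{ti}$ and factoring, one gets
\[
\int\pi_k(\chi_l)^n
=\sum_{\substack{i_a\le l\\ s_a,t_a\le k}}\Bigl(\int_{S_N^+}\prod_a p_{i_as_a}\Bigr)\Bigl(\int_{S_k^+}\prod_a q_{s_at_a}\Bigr)\Bigl(\int_{S_N^+}\prod_a r_{t_ai_a}\Bigr),
\]
which reduces the theorem to the asymptotics of a triple Weingarten integral.

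I would then invoke the Weingarten formula for $S_N^+$ from \cite{bc2}, \cite{bsp},
\[
\int_{S_N^+}u_{i_1j_1}\cdots u_{i_nj_n}=\sum_{\pi,\sigma\in NC(n)}\delta_\pi(i)\,\delta_\sigma(j)\,W_N(\pi,\sigma),
\]
where $\delta_\pi(i)=1$ iff $\pi\le\ker(i)$, together with the standard asymptotics $W_N(\pi,\sigma)=N^{-|\pi|}\delta_{\pi\sigma}+O(N^{-|\pi|-1})$ with $|\pi|$ the number of blocks. Applying this to each of the three factors introduces partitions $\pi,\rho,\tau\in NC(n)$, and carrying out the remaining sums over $i,s,t$ — each of which counts tuples compatible with a join — produces factors $l^{|\pi\vee\tau|}$, $k^{|\pi\vee\rho|}$, $k^{|\rho\vee\tau|}$, respectively. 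Collecting everything and substituting $k=sN$, $l=tN$, the leading contribution of a triple $(\pi,\rho,\tau)$ becomes
\[
s^{|\pi\vee\rho|+|\rho\vee\tau|-|\rho|}\;t^{|\pi\vee\tau|}\;N^{|\pi\vee\tau|+|\pi\vee\rho|+|\rho\vee\tau|-|\pi|-|\rho|-|\tau|}.
\]

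The combinatorial heart of the argument is then to show that the $N$-exponent is always $\le 0$, with equality iff $\pi=\rho=\tau$. This follows from the elementary inequality $|\alpha\vee\beta|\le\min(|\alpha|,|\beta|)\le\tfrac12(|\alpha|+|\beta|)$ applied to the three join pairs; simultaneous equality forces $|\pi|=|\rho|=|\tau|=|\pi\vee\rho|=|\pi\vee\tau|=|\rho\vee\tau|$, which in the block-lattice means $\pi=\rho=\tau$. The surviving diagonal contribution is
\[
\lim_{N\to\infty}\int\pi_k(\chi_l)^n=\sum_{\pi\in NC(n)}(st)^{|\pi|},
\]
which by Speicher's moment-cumulant formula is precisely the $n$-th moment of the free Poisson law of parameter $st$, finishing the proof.

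The hard part is not the diagonal calculation but the asymptotic control of the off-diagonal Weingarten terms, which are smaller than the diagonal ones only by a single power of $N$: one must verify that the exponent-counting extends to the full Weingarten expansion, so that no cancellation from a sub-leading off-diagonal term in one of the three factors produces a spurious contribution in the limit. This is a routine but careful extension of the diagonal case, using the bound $W_N(\pi,\sigma)=O(N^{-\max(|\pi|,|\sigma|)})$ and the same join inequalities as above.
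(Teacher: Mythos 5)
Your proposal is correct and follows essentially the same route as the paper's proof of Theorem 2.9: factorize the Haar state over $S_N^+\times S_k^+\times S_N^+$, apply the Weingarten formula for $S_N^+$ to each factor, sum over the free indices to produce the factors $l^{|\pi\vee\tau|}k^{|\pi\vee\rho|}k^{|\rho\vee\tau|}$, and use diagonal concentration of the Weingarten matrices together with the join inequality $|\alpha\vee\beta|\le\frac{1}{2}(|\alpha|+|\beta|)$ (equality iff $\alpha=\beta$) to reduce to the diagonal sum $\sum_{\pi\in NC(n)}(st)^{|\pi|}$, the moments of free Poisson $(st)$. The only difference is one of emphasis: you explicitly flag the need to control off-diagonal Weingarten terms, which the paper dispatches by citing the standard asymptotics from \cite{bc2}.
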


\begin{proof}
Observe first that at $k=l=N$ this corresponds to the fact that the law of the main character $\chi:S_N^+\to\mathbb N$ becomes free Poisson (1), in the $N\to\infty$ limit. Unlike in the classical case, the convergence here is stationary, starting from $N=4$. See \cite{bc2}.

More generally, at $k=N$ this corresponds to the fact that the truncated character $\chi_l:S_N^+\to\mathbb N$ becomes free Poisson ($t$), in the $l=tN\to\infty$ limit. See \cite{bc2}.

In general, we can use the same technique, namely the moment method, and the Weingarten formula. The variable that we are interested in, $\chi_k^l=\pi_k(\chi_l)$, is given by:
$$\chi_k^l=\sum_{i\leq l}\sum_{s,t\leq k}p_{is}\otimes q_{st}\otimes r_{ti}$$

By raising to the power $n$ and integrating, we obtain the following formula:
$$\int_{S_N^+\times S_k^+\times S_N^+}(\chi_k^l)^n=\sum_{i_a\leq l}\sum_{s_a,t_a\leq k}\int_{S_N^+}p_{i_1s_1}\ldots p_{i_ns_n}\int_{S_k^+}q_{s_1t_1}\ldots q_{s_nt_n}\int_{S_N^+}r_{t_1i_1}\ldots r_{t_ni_n}$$

By using now the Weingarten formula (\cite{bc2}), the above moment is:
\begin{eqnarray*}
c_n
&=&\sum_{ist}\sum_{\alpha\ldots\rho\in NC(n)}\delta_\alpha(i)\delta_\beta(s)W_{nN}(\alpha,\beta)\cdot\delta_\gamma(s)\delta_\delta(t)W_{nk}(\gamma,\delta)\cdot\delta_\varepsilon(t)\delta_\rho(i)W_{nN}(\varepsilon,\rho)\\
&=&\sum_{\alpha\ldots\rho\in NC(n)}W_{nN}(\alpha,\beta)W_{nk}(\gamma,\delta)W_{nN}(\varepsilon,\rho)\sum_{ist}\delta_\alpha(i)\delta_\beta(s)\delta_\gamma(s)\delta_\delta(t)\delta_\varepsilon(t)\delta_\rho(i)\\
&=&\sum_{\alpha\ldots\rho\in NC(n)}W_{nN}(\alpha,\beta)W_{nk}(\gamma,\delta)W_{nN}(\varepsilon,\rho)\sum_{ist}\delta_{\alpha\vee\rho}(i)\delta_{\beta\vee\gamma}(s)\delta_{\delta\vee\varepsilon}(t)\\
&=&\sum_{\alpha\ldots\rho\in NC(n)}W_{nN}(\alpha,\beta)W_{nk}(\gamma,\delta)W_{nN}(\varepsilon,\rho)\cdot l^{|\alpha\vee\rho|}k^{|\beta\vee\gamma|}k^{|\delta\vee\varepsilon|}
\end{eqnarray*}

Let us examine now the asymptotic regime $k=sN,l=tN,N\to\infty$ in the statement. We use here two standard facts from \cite{bc2}, namely the fact that in the $N\to\infty$ limit the Gram and Weingarten matrices are concentrated on the diagonal, and the fact that we have $|\pi\vee\sigma|\leq\frac{|\pi|+|\sigma|}{2}$, with equality when $\pi=\sigma$. We obtain, as in \cite{bc2}:
\begin{eqnarray*}
c_n
&\simeq&\sum_{\alpha,\gamma,\varepsilon\in NC(n)}N^{-|\alpha|}k^{-|\gamma|}N^{-|\varepsilon|}\cdot l^{|\alpha\vee\varepsilon|}k^{|\alpha\vee\gamma|}k^{|\gamma\vee\varepsilon|}\\
&\simeq&\sum_{\alpha,\gamma,\varepsilon\in NC(n)}N^{-|\alpha|-|\gamma|-|\varepsilon|+|\alpha\vee\varepsilon|+|\alpha\vee\gamma|+|\gamma\vee\varepsilon|}\cdot s^{-|\gamma|+|\alpha\vee\gamma|+|\gamma\vee\varepsilon|}\cdot t^{|\alpha\vee\varepsilon|}\\
&\simeq&\sum_{\alpha\in NC(n)}(st)^{|\alpha|}
\end{eqnarray*}

We recognize at right the well-known formula for the moments of the free Poisson law of parameter $st$, cf. \cite{nsp}, and this finishes the proof.
\end{proof}

As a conclusion, with Theorem 1.6 and Theorem 2.9 in hand, and by using the well-known fact that Poisson ($st$) $\to$ free Poisson ($st$) is indeed a liberation, in the sense of probability theory \cite{vdn}, we can now state that $\widetilde{S}_N\to\widetilde{S}_N^+$ is a ``correct'' liberation.

We should perhaps comment a bit more on all this. The early attempts of liberation go back to Brown \cite{bro} and McClanahan \cite{mcc}, who studied a certain compact semigroup $U_N^{nc}$. Later on, Wang found in \cite{wa1}, \cite{wa2} the correct liberations of $O_N,U_N,S_N$. When looking at more complicated groups, the situation can be quite unobvious, and a ``liberation criterion'' is needed. And the answer here is provided by the Bercovici-Pata bijection \cite{bpa}, which establishes a correspondence between ``classical'' and ``free'' measures.

Technically speaking, the bijection is between infinitely divisible measures $\mu$, and freely infinite divisible measures $\eta$, and the construction of the correspondence is very simple, via the requirement ``the cumulants of $\mu$ are the free cumulants of $\eta$''. See \cite{bpa}, \cite{nsp}.

Now back to quantum groups, it was realized since \cite{bc1}, \cite{bc2} that the correct ``liberation criterion'' should be the Bercovici-Pata bijection for the laws of truncated characters, in the $N\to\infty$ limit. The paper \cite{bbc} successfully used this criterion for finding the correct liberation of the hyperoctahedral $H_N$: indeed, there are two natural candidates for such a liberation $H_N^+$, one satisfying the criterion, and the other one not. We refer to \cite{bbc} for the full story here, and to \cite{bsp}, \cite{fre}, \cite{rwe} for further developments of this idea.

\section{Discrete isometries}

In what follows we discuss various analogues, discrete and continuous, of the above results about $S_N$. We will focus on 5 ``fundamental'' groups, namely the orthogonal, unitary and bistochastic groups $O_N,U_N,B_N$, the hyperoctahedral group $H_N=\mathbb Z_2\wr S_N$, and the group $K_N=\mathbb T\wr S_N$ consisting of permutation matrices with entries in $\mathbb T$.

As a first remark, we have inclusions between these groups, as follows:

\begin{proposition} 
We have inclusions between the $6$ fundamental groups,
$$\begin{matrix}
B_N&\subset&O_N&\subset&U_N\\
\\
\cup&&\cup&&\cup\\
\\
S_N&\subset&H_N&\subset&K_N
\end{matrix}$$
and in addition we have $S_N=B_N\cap H_N$, and $H_N=O_N\cap K_N$.
\end{proposition}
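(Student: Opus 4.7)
The proposition is entirely an unpacking of definitions, so my plan is to verify the inclusions one by one, then handle the two intersection statements.

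First, for the inclusions in the diagram, I would proceed as follows. The top row inclusions $B_N \subset O_N \subset U_N$ are immediate: $B_N$ is by definition the subgroup of $O_N$ fixing the all-ones vector $\xi=(1,\ldots,1)$, and any real orthogonal matrix is automatically unitary. The bottom row inclusions $S_N \subset H_N \subset K_N$ follow from the fact that a $\{0,1\}$-permutation matrix has all nonzero entries equal to $1\in\{\pm1\}\subset\mathbb T$. For the three vertical inclusions, I would note that $S_N\subset B_N$ since permutation matrices are orthogonal and have row/column sums equal to $1$; that $H_N\subset O_N$ since a signed permutation matrix is real with orthonormal rows; and that $K_N\subset U_N$ since a $\mathbb T$-permutation matrix has orthonormal complex rows.

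For the first intersection $S_N=B_N\cap H_N$, I would take $g\in B_N\cap H_N$. As an element of $H_N$, $g$ has exactly one nonzero entry in each row, lying in $\{\pm1\}$. As an element of $B_N$, every row sum of $g$ equals $1$. The unique nonzero entry in each row must therefore equal $1$, so $g\in S_N$. The reverse inclusion is obvious.

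For the second intersection $H_N=O_N\cap K_N$, I would take $g\in O_N\cap K_N$. As an element of $K_N$, each nonzero entry of $g$ lies in $\mathbb T$; as an element of $O_N$, every entry of $g$ is real. Hence the nonzero entries lie in $\mathbb T\cap\mathbb R=\{\pm1\}$, giving $g\in H_N$. Again the reverse inclusion is trivial.

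There is no real obstacle here; the only thing worth being careful about is that $B_N$ is the full stabilizer of $\xi$ inside $O_N$ (equivalently, the group of orthogonal matrices whose row sums and column sums are all equal to $1$), which is exactly what makes the $B_N\cap H_N$ argument force the signs to be positive.
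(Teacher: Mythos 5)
Your proof is correct and follows the same route as the paper, which simply records that all assertions are clear from the definitions (adding only the side remarks $O_N=\langle B_N,H_N\rangle$ and $U_N=\langle O_N,K_N\rangle$). You have merely written out the routine verifications explicitly, and they are all accurate.
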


\begin{proof}
All the assertions are clear from definitions. Let us mention, in addition, that we have as well $O_N=<B_N,H_N>$, and $U_N=<O_N,K_N>$. See \cite{bsp}.
\end{proof}

The choice of these particular groups comes from the ``easy quantum group'' philosophy from \cite{bsp}, further developed in \cite{fre}, \cite{rwe}. We are of course mostly interested in $O_N,U_N$, but inspiration from $S_N$, and especially from $H_N,K_N$, will be of great use.

In this section we discuss the discrete case, concerning $S_N,H_N,K_N$. We use a global approach, with a parameter $x=1,2,\infty$, coming from \cite{bb+}, \cite{bv1}. Let $\mathbb Z_x$ be the group of $x$-th roots of unity in the complex plane, with the convention $\mathbb Z_\infty=\mathbb T$. We have then:

\begin{proposition}
Given $x\in\{1,2,\ldots,\infty\}$, let $\widetilde{H}_N^x$ be the semigroup of partial permutations $\sigma\in\widetilde{S}_N$ ``signed'' by elements of $\mathbb Z_x$, that is, $\sigma^\varepsilon(i)=\varepsilon(i)\cdot\sigma(i)$, with $\varepsilon(i)\in\mathbb Z_x$.
\begin{enumerate}
\item $\widetilde{H}_N^x$ contains the group $H_N^x=\mathbb Z_x\wr S_N$.

\item At $x<\infty$ we have $|\widetilde{H}_N^x|=\sum_{k=0}^Nk!\binom{N}{k}^2x^k$.

\item We have a semigroup embedding $u:\widetilde{H}_N^x\subset M_N(\mathbb Z_x\cup\{0\})$.

\item We have a set-theoretical embedding $\widetilde{H}_N^x\subset H_{2N}^x$.
\end{enumerate}
\end{proposition}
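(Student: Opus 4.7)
The plan is to treat each of the four statements as a signed refinement of the corresponding fact for $\widetilde{S}_N$ from Section 1 (Propositions 1.2, 1.3, 1.4), exploiting the fact that $\widetilde{H}_N^x$ fibers over $\widetilde{S}_N$ via the map $\sigma^\varepsilon\mapsto\sigma$, with fiber $\mathbb Z_x^{\kappa(\sigma)}$ sitting over each partial permutation of domain size $\kappa(\sigma)$. Parts (1) and (2) are essentially counting, while (3) and (4) build matrix and set-theoretic embeddings by simply carrying the sign data along.

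For (1), a full permutation $\sigma\in S_N$ is the special partial permutation with full domain/range, so a signed permutation $\sigma^\varepsilon\in H_N^x=\mathbb Z_x\wr S_N$ is automatically a signed partial permutation; the inclusion is also a semigroup inclusion since composition on full domains is the usual one. For (2), I would start from Proposition 1.2: the $k!\binom{N}{k}^2$ partial bijections $\sigma:X\simeq Y$ with $|X|=k$ each admit exactly $x^k$ sign functions $\varepsilon:X\to\mathbb Z_x$, so summing over $k$ gives $\sum_{k=0}^N k!\binom{N}{k}^2 x^k$.

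For (3), I would define $u(\sigma^\varepsilon)\in M_N(\mathbb Z_x\cup\{0\})$ by
\[
u_{ij}(\sigma^\varepsilon)=\begin{cases}\varepsilon(j)&\text{if }\sigma(j)=i\\ 0&\text{otherwise}\end{cases}
\]
extending the $\{0,1\}$-valued embedding of Proposition 1.3. The image is clearly the set of matrices with at most one nonzero entry per row and per column, those entries lying in $\mathbb Z_x$. Injectivity follows because both $\sigma$ (from the positions of the nonzero entries, as in Proposition 1.3) and $\varepsilon$ (from their values) can be read off from the matrix. The only genuine check is that $u$ is a semigroup morphism: expanding the composition law $(\sigma')^{\varepsilon'}\sigma^\varepsilon$ one sees that the composed sign at $j$ is $\varepsilon(j)\varepsilon'(\sigma(j))$ whenever both partial maps are defined, which matches exactly the $(i,j)$ entry of the matrix product $u((\sigma')^{\varepsilon'})u(\sigma^\varepsilon)$.

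For (4), I would lift the embedding $\widetilde{S}_N\subset S_{2N}$ of Proposition 1.4 by setting the auxiliary ``connecting'' edges to carry trivial sign: with $\sigma'\in S_{2N}$ the extension of the underlying partial permutation, define $\varepsilon':\{1,\ldots,2N\}\to\mathbb Z_x$ by $\varepsilon'(i)=\varepsilon(i)$ for $i\in X$ and $\varepsilon'(i)=1$ otherwise, and send $\sigma^\varepsilon\mapsto(\sigma')^{\varepsilon'}\in H_{2N}^x$. Injectivity is immediate: Proposition 1.4 recovers $\sigma$ (hence $X$) from $\sigma'$, and then $\varepsilon$ is recovered by restricting $\varepsilon'$ to $X$. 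As noted already in Proposition 1.4, this is only a set-theoretic, not a semigroup, embedding. The main technical point across the whole proposition is simply the sign bookkeeping in (3); once one convinces oneself that $\varepsilon''(i)=\varepsilon(i)\varepsilon'(\sigma(i))$ on the domain of the composition, everything else is routine extension of Section 1.
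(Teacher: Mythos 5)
Your proposal is correct and follows essentially the same route as the paper: the same formula $u_{ij}(\sigma^\varepsilon)=\varepsilon(j)\delta_{i\sigma(j)}$ for (3), the same $x^k$ fiber count over Proposition 1.2 for (2), and the same lift of the Proposition 1.4 embedding with trivial signs on the auxiliary points for (4). You in fact supply more detail than the paper (which dismisses each item as clear), notably the explicit check that the composed sign $\varepsilon(j)\varepsilon'(\sigma(j))$ matches the matrix product.
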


\begin{proof}
Observe that at $x=1$ we obtain the semigroup $\widetilde{S}_N$, and the standard results about it, from section 1 above. In general, the proof is similar. First, the fact that $\widetilde{H}_N^x$ is indeed a semigroup, with multiplication obtained by composing the partial permutations, and then multiplying the ``signs'', inside the group $\mathbb Z_x$, follows from definitions.

(1) This is clear from the functoriality of $\wr$, because $\widetilde{H}_N^x=\mathbb Z_x\wr\widetilde{S}_N$. In the matrix model picture, $H_N^x\subset M_N(\mathbb Z_x\cup\{0\})$ is the group of matrices having exactly one nonzero entry on each row and each column, and the assertion follows as well from (3) below.

(2) This formula, generalizing the counting formula in Proposition 1.2, is clear. As a side remark here, in the $x\to\infty$ limit, the cardinality becomes concentrated on the group part, $H_N^x$. Also, we do not know what happens in the $N\to\infty$ limit.

(3) This  is clear too, by using the coordinate functions $u_{ij}:\widetilde{H}_N^x\to\mathbb Z_x\cup\{0\}$ given by $u_{ij}(\sigma^\varepsilon)=\varepsilon(j)\delta_{i\sigma(j)}$. Observe that the image of the embedding consists of the matrices in the target having at most one nonzero entry, on each row and each column.

(4) This statement, generalizing Proposition 1.4, is once again clear. We can indeed use the same formula as there, in order to construct the embedding.
\end{proof}

Observe that the linear map $T=u(\sigma)$ constructed in (3) above is given by $T(e_i)=\varepsilon(i)e_{\sigma(i)}$, where $\{e_1,\ldots,e_N\}$ is the standard basis of $\mathbb C^N$, with the convention $T(e_i)=0$ when $\sigma(i)$ is undefined. Thus, $\widetilde{H}_N^s$ is the semigroup of partial isometries of $\mathbb C^N$ which partially permute the coordinate axes, and rotate inside them by scalars in $\mathbb Z_x$. In the real cases, $x=1,2$, we can of course restrict these partial isometries to $\mathbb R^N$.

Now back to the fundamental groups in Proposition 3.1, observe that at  $x=1,2,\infty$ the group $H_N^x=\mathbb Z_x\wr S_N$ appearing in (1) above is given by $H_N^1=S_N$, $H_N^2=H_N$, $H_N^\infty=K_N$. Thus, we have now semigroup analoues of the groups $S_N,H_N,K_N$:

\begin{proposition}
The semigroup $\widetilde{H}_N^x$ specializes at $x=1,2,\infty$ as follows:
\begin{enumerate}
\item $\widetilde{S}_N=\widetilde{H}_N^1$ is the semigroup of partial permutations.

\item $\widetilde{H}_N=\widetilde{H}_N^2$ is the semigroup of signed partial permutations.

\item $\widetilde{K}_N=\widetilde{H}_N^\infty$ is the semigroup of partial permutations signed by elements of $\mathbb T$.
\end{enumerate}
\end{proposition}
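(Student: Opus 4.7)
The plan is to unwind the general construction $\widetilde{H}_N^x = \mathbb Z_x \wr \widetilde{S}_N$ from Proposition 3.2 at the three distinguished values $x=1,2,\infty$, and observe that each specialization reproduces, by name, the classical semigroup claimed. The key input is simply the identification $\mathbb Z_1 = \{1\}$, $\mathbb Z_2 = \{\pm 1\}$, $\mathbb Z_\infty = \mathbb T$, so that the signing data $\varepsilon(i) \in \mathbb Z_x$ appearing in the formula $\sigma^\varepsilon(i) = \varepsilon(i)\cdot\sigma(i)$ collapses respectively to a trivial sign, a $\pm 1$ sign, and a unimodular complex sign.

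For (1), at $x=1$ the only available scalar is $1$, so every sign function is trivial and the map $\sigma^\varepsilon \mapsto \sigma$ is a bijection $\widetilde{H}_N^1 \to \widetilde{S}_N$; since the composition law on $\widetilde{H}_N^x$ reduces on trivial signs to the ordinary composition of partial permutations, this bijection is a semigroup isomorphism. For (2) and (3) the argument is identical: a pair $(\sigma, \varepsilon)$ with $\varepsilon$ taking values in $\{\pm 1\}$, respectively in $\mathbb T$, is by definition a signed (respectively $\mathbb T$-signed) partial permutation, and the semigroup law built into Proposition 3.2---compose the underlying partial permutations, and multiply signs pointwise---matches the standard one. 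A convenient way to certify all three cases uniformly is through the matrix embedding $u : \widetilde{H}_N^x \subset M_N(\mathbb Z_x \cup \{0\})$ of Proposition 3.2(3): it is manifestly multiplicative, and its image picks out precisely the matrices having at most one nonzero entry per row and column, with nonzero entries drawn from $\mathbb Z_x$, which is the standard matrix-model description of $\widetilde{S}_N$, $\widetilde{H}_N$, $\widetilde{K}_N$ respectively.

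There is essentially no obstacle here; the statement functions as a naming convention, and the only content to be checked---that the three semigroup structures really do agree---follows from the functoriality of the wreath product together with the chain $\{1\} \subset \{\pm 1\} \subset \mathbb T$ of subgroups. As a sanity check one can verify the group-level specializations $H_N^1 = S_N$, $H_N^2 = H_N$, $H_N^\infty = K_N$ already recorded in Proposition 3.2(1), which are precisely the present statement restricted to bijective $\sigma$.
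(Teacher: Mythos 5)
Your proposal is correct and follows the same route as the paper, which simply observes that the $x=1$ case recovers $\widetilde{S}_N$ and that the $x=2,\infty$ cases are particularizations of the construction in Proposition 3.2; your added detail (the identifications $\mathbb Z_1=\{1\}$, $\mathbb Z_2=\{\pm1\}$, $\mathbb Z_\infty=\mathbb T$ and the matrix-model check) just makes explicit what the paper treats as clear.
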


\begin{proof}
The fact that at $x=1$ we obtain indeed $\widetilde{S}_N$ is clear. The assertions at $x=2,\infty$ are just particularizations of the main construction in Proposition 3.2.
\end{proof}

Summarizing, we have now semigroups $\widetilde{S}_N,\widetilde{H}_N,\widetilde{K}_N$, defined via a global approach. In what follows we will keep using the global approach, in terms of $x\in\{1,\ldots,\infty\}$, by keeping in mind that the $x=1,2,\infty$ cases are those which we are interested in. 

In order to liberate, recall that the quantum group $H_N^{x+}$ is defined by making its standard coordinates $u_{ij}$ subject to the following relations: $u=(u_{ij})$ and $u^t=(u_{ji})$ are unitaries, $u_{ij}u_{ij}^*=u_{ij}^*u_{ij}=p_{ij}$ (projections), and $u_{ij}^x=p_{ij}$. Here the last relation dissapears at $x=\infty$. We have $H_N^{1+}=S_N^+,H_N^{2+}=H_N^+,H_N^{\infty+}=K_N^+$. See \cite{bb+}, \cite{bv1}.

Regarding now semigroup case, we have here the following result:

\begin{proposition}
Let $C(\widetilde{H}_N^{x+})$ be the universal $C^*$-algebra generated by $N^2$ variables $u_{ij}$ such that $u_{ij}u_{ij}^*=u_{ij}^*u_{ij}=p_{ij}$ (projections), $p=(p_{ij})$ is submagic, and $u_{ij}^x=p_{ij}$.
\begin{enumerate}
\item $C(\widetilde{H}_N^{x+})$ is a bialgebra, with $\Delta(u_{ij})=\sum_ku_{ik}\otimes u_{kj}$ and $\varepsilon(u_{ij})=\delta_{ij}$.

\item We have an embedding $H_N^{x+}\subset\widetilde{H}_N^{x+}$, obtained by assuming that $p$ is magic.

\item We have an embedding $\widetilde{H}_N^x\subset\widetilde{H}_N^{x+}$, obtained assuming that the $u_{ij},u_{ij}^*$ commute.
\end{enumerate}
\end{proposition}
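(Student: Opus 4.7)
The plan is to establish each of the three items by the standard universal algebra technique: define candidate maps on generators, then verify that the images satisfy the defining relations of the universal $C^*$-algebra, so that they extend by the universal property. The key technical ingredient that I would prove first is the orthogonality identity $u_{ij}u_{kj}=0$ for $i\neq k$ and $u_{ij}u_{il}=0$ for $j\neq l$. Although these are not part of the presentation, they follow from it: the relations $u_{ij}u_{ij}^*=u_{ij}^*u_{ij}=p_{ij}$ say that $u_{ij}$ is a normal partial isometry, so $u_{ij}u_{ij}^*u_{ij}=u_{ij}$ gives $u_{ij}=p_{ij}u_{ij}=u_{ij}p_{ij}$, and combining with the submagic orthogonality $p_{ij}p_{kj}=0$ yields $u_{kj}u_{ij}=(u_{kj}p_{kj})(p_{ij}u_{ij})=0$. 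Taking adjoints also gives the mixed versions $u_{kj}^*u_{ij}=0$, etc.

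For part (1), the $\varepsilon$ case is trivial since the scalar matrix $(\delta_{ij})$ satisfies every relation. For $\Delta$, the above lemma is the workhorse: when one expands $\Delta(u_{ij})^*\Delta(u_{ij})$, $\Delta(u_{ij})\Delta(u_{ij})^*$ and the $x$-th power $\Delta(u_{ij})^x=\bigl(\sum_k u_{ik}\otimes u_{kj}\bigr)^x$, every cross-term involves a product of the forbidden forms in one of the two tensor slots (on a fixed row or on a fixed column), and so vanishes; the surviving diagonal contribution is $\sum_k p_{ik}\otimes p_{kj}$ in each case, which I would take as a provisional definition of $\Delta(p_{ij})$. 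A short direct computation using submagic twice shows that the matrix $(\Delta(p_{ij}))_{ij}$ is itself submagic. Coassociativity and the counit axiom are verified on generators by inspection, and extend globally by the $*$-homomorphism property.

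For part (2), the presentation of $C(H_N^{x+})$ is obtained from that of $C(\widetilde{H}_N^{x+})$ by adjoining the sum relations $\sum_k p_{ik}=\sum_k p_{kj}=1$, which promote submagic to magic; this yields a surjection $C(\widetilde{H}_N^{x+})\twoheadrightarrow C(H_N^{x+})$ intertwining $\Delta$ and $\varepsilon$. For part (3), the coordinate functions $u_{ij}(\sigma^\varepsilon)=\varepsilon(j)\delta_{i\sigma(j)}$ on $\widetilde{H}_N^x$ satisfy all the defining relations plus commutativity and separate points, so Gelfand duality gives the surjection $C(\widetilde{H}_N^{x+})\twoheadrightarrow C(\widetilde{H}_N^x)$. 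I expect the main obstacle to be the verification $\Delta(u_{ij})^x=\Delta(p_{ij})$: the naive expansion produces $N^x$ terms, and only after repeated use of the orthogonality lemma --- on both rows and columns, in both tensor slots, between every consecutive pair of letters in the product --- do they collapse onto the diagonal $k_1=\cdots=k_x$. Without that preliminary lemma this relation would be out of reach.
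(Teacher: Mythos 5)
Your proposal is correct and follows essentially the same route as the paper: both hinge on the same key orthogonality lemma (coordinates on a common row or column annihilate each other, with or without adjoints), which the paper derives from the positivity identity $\bigl((ab)(ab)^*\bigr)^2=aqpqa^*\cdot(\ldots)=0$ while you derive it from the partial isometry identities $a=ap_{ij}=p_{ij}a$ — a cosmetic difference. The subsequent collapse of $\Delta(u_{ij})\Delta(u_{ij})^*$, $\Delta(u_{ij})^*\Delta(u_{ij})$ and $\Delta(u_{ij})^x$ onto the diagonal, and the treatment of (2) and (3) via the universal property and Gelfand duality, match the paper's argument.
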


\begin{proof}
We use the fact that $aa^*=a^*a=p,bb^*=b^*b=q$, with $p,q$ orthogonal projections ($pq=qp=0$) implies $ab=ab^*=0$. This follows indeed from:
$$(ab)(ab)^*(ab)(ab)^*=(ab^*)(ab^*)^*(ab^*)(ab^*)^*=aqpqa^*=0$$

(1) The existence of $\varepsilon$, defined as in the statement, being clear, let us check the existence of $\Delta$. We set $U_{ij}=\sum_ku_{ik}\otimes u_{kj}$. By using the above observation, we get:
$$U_{ij}U_{ij}^*
=\sum_{kl}u_{ik}u_{il}^*\otimes u_{kj}u_{lj}^*
=\sum_ku_{ik}u_{ik}^*\otimes u_{kj}u_{kj}^*
=\sum_kp_{ik}\otimes p_{kj}$$

A similar computation shows that $U_{ij}^*U_{ij}$ is given by the same formula. Now since for $p=(p_{ij})$ magic, the matrix $P_{ij}=\sum_kp_{ik}\otimes p_{kj}$ is magic too, this finishes the verification of the first two conditions. The last condition, $U_{ij}^x=P_{ij}$, is routine to check as well:
\begin{eqnarray*}
U_{ij}^x
&=&\sum_{k_1\ldots k_x}u_{ik_1}\ldots u_{ik_x}\otimes u_{k_1j}\ldots u_{k_xj}\\
&=&\sum_ku_{ik}\ldots u_{ik}\otimes u_{kj}\ldots u_{kj}\\
&=&\sum_ku_{ik}^x\otimes u_{kj}^x=P_{ij}
\end{eqnarray*}

(2) This is clear from the definitions of $H_N^{x+}$ and $\widetilde{H}_N^{x+}$, because the unitarity conditions $uu^*=u^*u=1$ and $u^t\bar{u}=\bar{u}u^t=1$ tell us precisely that $p$ must be magic.

(3) This follows from Gelfand duality, by observing that $\widetilde{S}_N$ is part of the spectrum, and then by using $u_{ij}^x=$ projection, which in the commutative setting reads $Im(u_{ij})\subset\mathbb Z_x\cup\{0\}$, in order to reconstruct the whole semigroup $\widetilde{H}_N^x=\mathbb Z_x\wr\widetilde{S}_N$ inside the spectrum. For details we refer to \cite{bv1} for the group case, the proof in the semigroup case being similar.
\end{proof}

Regarding now the probabilistic aspects, we first need an extension of the ``$\sigma=\alpha\beta\gamma$'' trick. In order to formulate a global statement, pick an exponent $\circ\in\{\emptyset,+\}$, set $\kappa=\sum_{ij}u_{ij}u_{ij}^*$, and consider the algebra $C(\widetilde{H}_N^{x\circ(k)})=C(\widetilde{H}_N^{x\circ})/<\kappa=k>$. We have then:

\begin{proposition}
For any exponent $\circ\in\{\emptyset,+\}$ we have a representation
$$\pi_k:C(\widetilde{H}_N^{x\circ(k)})\to C(H_N^{x\circ}\times H_k^{x\circ}\times H_N^{x\circ})\quad:\quad\pi_k(u_{ij})=\sum_{s,t\leq k}p_{is}\otimes q_{st}\otimes r_{tj}$$
which commutes with the Haar functionals in the classical case, and at $k=N$.
\end{proposition}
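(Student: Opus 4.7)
The plan is to follow the four-step template of Proposition 2.8, with the new ingredient being the algebraic lemma already exploited in the proof of Proposition 3.4: if $aa^*=a^*a=e$ and $bb^*=b^*b=f$ with $e,f$ orthogonal projections, then $ab=ab^*=0$. Applied to entries of the three ``partially magic'' matrices $p,q,r$ lying in a common row or column, this will repeatedly kill cross terms.

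First I would check that $U_{ij}=\sum_{s,t\leq k}p_{is}\otimes q_{st}\otimes r_{tj}$ satisfies the three defining relations of $C(\widetilde{H}_N^{x+})$ from Proposition 3.4. Expanding $U_{ij}U_{ij}^*$, the lemma forces the inner summation indices to collapse pairwise, giving
$$U_{ij}U_{ij}^*=U_{ij}^*U_{ij}=\sum_{s,t\leq k}\tilde p_{is}\otimes\tilde q_{st}\otimes\tilde r_{tj}=:P_{ij},$$
where $\tilde p_{ij}=p_{ij}p_{ij}^*=p_{ij}^x$, and similarly for $\tilde q,\tilde r$. The same trick, applied inside $U_{ij}^x$, collapses the $x$-fold sum to its diagonal and yields $U_{ij}^x=P_{ij}$. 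A short verification using the magic property of $\tilde p,\tilde q,\tilde r$, parallel to Proposition 2.8(1), shows that $(P_{ij})$ is submagic. In the classical case $\circ=\emptyset$ the target is commutative, so the additional commutation relations of $C(\widetilde{H}_N^x)$ come for free.

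Factorization through the quotient by $\kappa=k$ is then immediate: using that rows of $\tilde p$ and columns of $\tilde r$ sum to $1$ and that $(\tilde q_{st})_{s,t\leq k}$ is a $k\times k$ magic unitary,
$$\sum_{ij}U_{ij}U_{ij}^*=\sum_{s,t\leq k}1\otimes\tilde q_{st}\otimes 1=k.$$
Classical Haar invariance now goes exactly as in Proposition 2.7(3): the dual set map $\varphi_k:H_N^x\times H_k^x\times H_N^x\to\widetilde{H}_N^{x(k)}$ sends $(\alpha,\beta,\gamma)$ to the $\mathbb Z_x$-decorated partial permutation $\alpha\beta\gamma$; a signed version of Proposition 2.6 makes $\varphi_k$ surjective with constant fibre size, so it transports uniform measure to uniform measure.

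The step I expect to be the main obstacle is quantum Haar compatibility at $k=N$. Here I would imitate the Woronowicz-style computation of Proposition 2.8(4), writing the mixed moments as
$$\int u_{i_1j_1}^{\varepsilon_1}\cdots u_{i_nj_n}^{\varepsilon_n}=P^{\varepsilon}_{\underline{i},\underline{j}},$$
where $P^{\varepsilon}$ is the orthogonal projection onto the $\varepsilon$-coloured fixed space of $u^{\otimes n}$ over $H_N^{x+}$. Substituting $\pi_N(u_{ij})=\sum_{s,t}p_{is}\otimes q_{st}\otimes r_{tj}$ and factoring over the three tensor legs by Fubini, the sum over intermediate indices assembles into a matrix product $P^{\varepsilon}P^{\varepsilon}P^{\varepsilon}$, and idempotency closes the argument. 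The delicate point relative to the $S_N^+$ case is the bookkeeping with the colours $\varepsilon_a\in\{1,\ast\}$ and the use of the coloured Weingarten formula of \cite{bc2}, \cite{bv1}; but since the only algebraic fact consumed is that an orthogonal projection squares to itself, the $S_N^+$ argument transfers mutatis mutandis.
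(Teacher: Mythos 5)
Your proposal follows essentially the same route as the paper: the cross terms in $U_{ij}U_{ij}^*$, $U_{ij}^*U_{ij}$ and $U_{ij}^x$ are killed by the relation $ab=ab^*=0$ from the proof of Proposition 3.4, everything collapses to the magic matrices $\tilde p,\tilde q,\tilde r$, the ideal $\langle\kappa=k\rangle$ is handled by the same $\sum_{ij}U_{ij}U_{ij}^*=k$ computation, and the Haar compatibility at $k=N$ is the same $P^3=P$ projection argument. Your added details (the constant-fibre-size justification in the classical case, the coloured bookkeeping at $k=N$) are correct elaborations of points the paper leaves implicit, not a different method.
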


\begin{proof}
In the free case this is a straightforward extension of Proposition 2.8 above. The only fact that needs to be checked is that the matrix on the right in the statement, say $U_{ij}$, produces indeed a representation of $C(\widetilde{H}_N^{x+(k)})$. For this purpose, we will use several times the formulae $ab=ab^*=0$, from the proof of Proposition 3.4 above. We have:
$$U_{ij}U_{ij}^*
=\sum_{s,t\leq k}\sum_{v,w\leq k}p_{is}p_{iv}^*\otimes q_{st}q_{vw}^*\otimes r_{tj}r_{wj}^*
=\sum_{s,t\leq k}p_{is}p_{is}^*\otimes q_{st}q_{st}^*\otimes r_{tj}r_{tj}^*$$

A similar computation shows that $U_{ij}^*U_{ij}$ is given by the same formula. Now since the matrices $(p_{is}p_{is}^*)$, $(q_{st}q_{st}^*)$, $(r_{tj}r_{tj}^*)$ are all magic, we deduce as in Proposition 2.8 (1) that the matrix on the right is submagic as well. Also, we have:
\begin{eqnarray*}
U_{ij}^x
&=&\sum_{s_1\ldots s_x\leq k}\sum_{t_1\ldots t_x\leq k}p_{is_1}\ldots p_{is_x}\otimes q_{s_1t_1}\ldots q_{s_xt_x}\otimes r_{s_1j}\ldots r_{s_xj}\\
&=&\sum_{s,t\leq k}p_{is}^x\otimes q_{st}^x\otimes r_{sj}^x=\sum_{s,t\leq k}p_{is}p_{is}^*\otimes q_{st}q_{st}^*\otimes r_{sj}r_{sj}^*
\end{eqnarray*}

We conclude that we have $U_{ij}^x=U_{ij}U_{ij}^*$. Let us verify now the fact that our representation vanishes indeed on the ideal $<\kappa=k>$. We have here:
\begin{eqnarray*}
\sum_{ij}U_{ij}U_{ij}^*
&=&\sum_{ij}\sum_{s,t\leq k}p_{is}p_{is}^*\otimes q_{st}q_{st}^*\otimes r_{tj}r_{tj}^*\\
&=&\sum_{s,t\leq k}1\otimes q_{st}q_{st}^*\otimes 1=k
\end{eqnarray*}

This finishes the construction of $\pi_k$, in the free case. In the classical case now, the existence of $\pi_k$ follows by restriction. Finally, the assertions regarding the Haar functional are all clear, the one regrarding the classical case this being obvious, and the one regarding the case $k=N$ being already known, from the proof of Proposition 2.8 (3).
\end{proof}

By using Proposition 3.5 we can now define complex probability measures $\mu_k^l=law(\chi_k^l)$, with $\chi_k^l=\pi_k(\chi_l)$, and we can formulate our probabilistic result, as follows:

\begin{theorem}
The operation $\widetilde{H}_N^x\to\widetilde{H}_N^{x+}$ is a liberation, in the sense that we have the Bercovici-Pata bijection for the measures $\mu_k^l$, in the $k=sN,l=tN,N\to\infty$ limit.
\end{theorem}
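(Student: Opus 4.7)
The plan is to extend the moment-method argument of Theorem 2.9 uniformly in $x\in\{1,2,\infty\}$ and in the exponent $\circ\in\{\emptyset,+\}$. I would start from the factorization formula of Proposition 3.5, giving
$$\chi_k^l=\pi_k(\chi_l)=\sum_{i\leq l}\sum_{s,t\leq k}p_{is}\otimes q_{st}\otimes r_{ti}\in C(H_N^{x\circ}\times H_k^{x\circ}\times H_N^{x\circ}).$$
Since for $x\geq2$ the variable $\chi_k^l$ is no longer self-adjoint, I would keep track of the full $*$-moment
$$c_{e_1\ldots e_n}=\int(\chi_k^l)^{e_1}\ldots(\chi_k^l)^{e_n},\qquad e_i\in\{1,*\},$$
so as to retain enough information for testing the Bercovici-Pata criterion at the end.

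The next step is to plug in the Weingarten formula for $H_N^{x\circ}$ separately in each of the three tensor factors. The relevant category of partitions is $P^x$ in the classical case and $NC^x$ in the free case, decorated by colors $e_1,\ldots,e_n$, as in \cite{bb+}, \cite{bbc}, \cite{bv1}. Carrying out the three Weingarten expansions and performing the resulting sums over $i,s,t$ produces a sum over six colored partitions with weight
$$W_{nN}(\alpha,\beta)\,W_{nk}(\gamma,\delta)\,W_{nN}(\varepsilon,\rho)\cdot l^{|\alpha\vee\rho|}k^{|\beta\vee\gamma|}k^{|\delta\vee\varepsilon|},$$
exactly as in the proof of Theorem 2.9, and with the same contraction pattern on $\alpha\vee\rho$, $\beta\vee\gamma$, $\delta\vee\varepsilon$.

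The asymptotic analysis in the regime $k=sN$, $l=tN$, $N\to\infty$ then follows the same two-step scheme as in \cite{bc2}. First, the concentration of the Gram and Weingarten matrices on the diagonal forces $\beta=\alpha$, $\delta=\gamma$, $\rho=\varepsilon$ at leading order, contributing a factor $N^{-|\alpha|}k^{-|\gamma|}N^{-|\varepsilon|}$; second, the universal inequality $|\pi\vee\sigma|\leq(|\pi|+|\sigma|)/2$, with equality iff $\pi=\sigma$, forces a further collapse $\alpha=\gamma=\varepsilon$. After cancellation of the $N$-powers, the clean limit is
$$c_{e_1\ldots e_n}\simeq\sum_{\pi\in D^x(e_1,\ldots,e_n)}(st)^{|\pi|},$$
with $D^x=P^x$ classically and $D^x=NC^x$ in the free case.

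Finally, I would identify these limits with known laws. By the computations of \cite{bb+}, \cite{bbc}, \cite{bv1}, the right-hand side is the $*$-moment of the Bessel-type compound Poisson law $b^x_{st}$ on $\mathbb Z_x$ in the classical case, and of its free analogue in the free case; the two families are known to be Bercovici-Pata conjugate, see \cite{bbc}, \cite{bsp}, \cite{bpa}, giving the claimed liberation statement. The main obstacle I anticipate is not the asymptotic collapse, which is by now standard, but rather the $*$-bookkeeping in the three Weingarten applications at $x=\infty$: one has to verify that the coloring patterns inherited from the $e_i$'s in each of the three Weingarten expansions are compatible in the successive joins, so that after the triple collapse the three partitions really live in a common colored category $D^x(e_1,\ldots,e_n)$ and produce only the single power $(st)^{|\pi|}$.
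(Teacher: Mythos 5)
Your argument is correct, and in the free case it is exactly the paper's proof: there the paper simply extends the Weingarten computation of Theorem 2.9, replacing $NC$ by the colored category $NC_x=P_x\cap NC$, and arrives at the same moment formula $\sum_{\alpha\in NC_x(e_1,\ldots,e_n)}(st)^{|\alpha|}$ that you obtain. Where you genuinely diverge is the classical case: the paper does \emph{not} run the Weingarten machine there, but instead conditions on the signs, writing $\chi_l=\chi_l^+-\chi_l^-$ and reducing $P_{\widetilde{H}_N^{(k)}}(\chi_l=p)$ to the partial-permutation probabilities $P_{\widetilde{S}_N^{(k)}}(\chi_l=p+2r)$ already controlled by Theorem 1.6; this yields the explicit compound Poisson (Bessel) density $e^{-st}\sum_n\frac{(st)^n}{n!}\rho_x^{*n}$, from which the $*$-moment formula $\sum_{\alpha\in P_x(e_1,\ldots,e_n)}(st)^{|\alpha|}$ is then read off via \cite{bb+}. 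Your uniform moment-method treatment buys symmetry between the classical and free sides — the Bercovici--Pata bijection becomes visible directly as $P_x$ versus $NC_x$ with identical weights — at the cost of only identifying the limit law through its moments; the paper's route gives the actual limiting measure in closed form, which is more information. As for the one obstacle you flag, the $*$-bookkeeping at $x=\infty$: it dissolves on inspection, since $(\chi_k^l)^{*}=\sum_{i,s,t}p_{is}^*\otimes q_{st}^*\otimes r_{ti}^*$ carries the \emph{same} color into each of the three tensor legs, so all three Weingarten expansions for the word $(\chi_k^l)^{e_1}\cdots(\chi_k^l)^{e_n}$ run over the single colored set $D^x(e_1,\ldots,e_n)$ and the triple collapse produces one partition and one power $(st)^{|\pi|}$, as you hoped.
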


\begin{proof}
The proof here is similar to the proof of Theorem 1.6 and Theorem 2.9, by using this time more advanced integration technology, from \cite{bb+}, \cite{bbc}. 

Let us first discuss the case of $\widetilde{H}_N$. Here the truncated character $\chi_l$ is real, and we can write $\chi_l=\chi_l^+-\chi_l^-$, as in \cite{bbc}. For $p\in\mathbb N$, we have the following formula:
\begin{eqnarray*}
P_{\widetilde{H}_N^{(k)}}(\chi_l=p)
&=&\sum_{r\geq0}P_{\widetilde{H}_N^{(k)}}(\chi_l^+=p+r,\chi_l^-=r)\\
&=&\sum_{r\geq0}\left(\frac{1}{2}\right)^{p+2r}\binom{p+2r}{r}P_{\widetilde{H}_N^{(k)}}(\chi_l^++\chi_l^-=p+2r)\\
&=&\sum_{r\geq0}\left(\frac{1}{2}\right)^{p+2r}\binom{p+2r}{r}P_{\widetilde{S}_N^{(k)}}(\chi_l=p+2r)
\end{eqnarray*}

Here the binomial coefficient comes from selecting $r$ negative components among a total of $p+2r$ nonzero components, and the $1/2$ power comes from matching the signs of these $p+2r$ nonzero components. In the limit $k=sN,l=tN,N\to\infty$, we obtain:
$$P_{\widetilde{H}_N^{(k)}}(\chi_l=p)
\simeq\sum_{r\geq0}\left(\frac{1}{2}\right)^{p+2r}\binom{p+2r}{r}\frac{(ts)^{p+2r}}{e^{ts}(p+2r)!}
=e^{-t}\sum_{r\geq0}\frac{(st/2)^{p+2r}}{(p+r)!r!}$$

Here we have used the Poisson convergence result from Theorem 1.6 above. Now since the density is the same at $p$ and at $-p$, we conclude that we have:
$$\mu_k^l\simeq e^{-t}\sum_{p\in\mathbb Z}\sum_{r\geq 0}\frac{(st/2)^{|p|+2r}}{(|p|+r)!r!}\delta_p$$

More generally now, consider the semigroup $\widetilde{H}_N^x$, with $x\in\mathbb N\cup\{\infty\}$ arbitrary. As explained in \cite{bb+}, the same argument applies, and leads to the following general asymptotic formula, where $\rho_x$ denotes the uniform measure on the $x$-th roots of unity:
$$\mu_k^l\simeq e^{-st}\sum_{n\geq0}\frac{(st)^n}{n!}\rho_x^{*n}$$

Thus we are led to the Bessel laws from \cite{bb+}, the parameter here being $st$. As explained in \cite{bb+}, by the Poisson convergence theorem, these Bessel laws appear as compound Poisson laws, with respect to $\rho_x$, and the following formula for $*$-moments is available:
$$\mu_k^l(X^{e_1}\ldots X^{e_n})=\sum_{\alpha\in P_x(e_1,\ldots,e_n)}(st)^{|\alpha|}$$

Here $e_1,\ldots,e_n\in\{1,*\}$ are exponents, and $P_x$ is the category of partitions associated to the group $H_N^x$, with legs colored by $\{1,*\}$, having the property that in each block, the number of legs colored 1 equals the number of legs colored $*$, modulo $x$.

In the free case now, the Weingarten arguments from the proof of Theorem 2.9 above extend as well, by replacing $NC$ with the category $NC_x=P_x\cap NC$. We obtain the following formula, known already from \cite{bb+} to hold in the group case, $k=N$:
$$\int_{H_N^{x+}\times H_k^{x+}\times H_N^{x+}}(\chi_k^l)^{e_1}\ldots(\chi_k^l)^{e_n}=\sum_{\alpha\in NC_x(e_1,\ldots,e_n)}(st)^{|\alpha|}$$

See \cite{bbc} for details at $x=2$, and \cite{bb+} for details in general. Now by comparing with the classical formula, we conclude that we have the Bercovici-Pata bijection. See \cite{bb+}.
\end{proof}

\section{Continuous isometries}

In this section we discuss the ``continuous extension'' of the results in the previous sections, with the permutation-modelled groups $S_N,H_N,K_N$ replaced by the bistochastic, orthogonal and unitary groups $B_N,O_N,U_N$. We recall that $B_N$ is by definition the group of orthogonal matrices having sum 1 on each row and column. See \cite{bsp}.

Our starting point will be the following definition:

\begin{definition}
$\widetilde{O}_N,\widetilde{U}_N,\widetilde{B}_N$ are semigroups of partial linear isometries of $\mathbb R^N,\mathbb C^N$,
\begin{eqnarray*}
\widetilde{O}_N&=&\{T:A\to B\ {\rm isometry}|A,B\subset\mathbb R^N\}\\
\widetilde{U}_N&=&\{T:A\to B\ {\rm isometry}|A,B\subset\mathbb C^N\}\\
\widetilde{B}_N&=&\{T:A\to B\ {\rm stochastic\ isometry}|A,B\subset\mathbb R^N\}
\end{eqnarray*}
with the usual composition operation, $T'T:T^{-1}(A'\cap B)\to T'(A'\cap B)$.
\end{definition} 

Here we call a partial isometry $T\in\widetilde{O}_N$ ``stochastic'' if $T\xi_A=\xi_B$, where $\xi_A,\xi_B$ are the orthogonal projections on $A,B$ of the all-one vector $\xi=(1,\ldots,1)^t$.

As a first remark, $\widetilde{O}_N,\widetilde{U}_N$ are indeed semigroups, with respect to the operation in the statement. The same holds for $\widetilde{B}_N$, and this is best seen in the matrix model picture:

\begin{proposition}
We have an embedding $\widetilde{U}_N\subset M_N(\mathbb C)$, obtained by completing maps $T:A\to B$ into linear maps $U:\mathbb C^N\to\mathbb C^N$, by setting $U_{|A^\perp}=0$. Moreover:
\begin{enumerate}
\item This embedding makes $\widetilde{U}_N$ correspond to the set of matrix-theoretic partial isometries, i.e. to the matrices $U\in M_N(\mathbb C)$ satisfying $UU^*U=U$.

\item The semigroup operation on $\widetilde{U}_N$ corresponds in this way to the semigroup operation for matrix-theoretic partial isometries, $U\circ V=U(U^*U\wedge VV^*)V$.
\end{enumerate}
This embedding restricts to embeddings $\widetilde{O}_N,\widetilde{B}_N\subset M_N(\mathbb R)$, having as image the matrices $U\in M_N(\mathbb R)$ satisfying  $UU^tU=U$, respectively $UU^tU=U$ and $U\xi=UU^t\xi$.
\end{proposition}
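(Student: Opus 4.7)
The plan is to handle the four assertions in order: well-definedness and injectivity of the embedding, the matrix characterization $UU^*U=U$, the composition formula, and finally the real and bistochastic variants. First I would check that $T\mapsto U$ is well-defined: a partial isometry $T:A\to B$ admits a unique linear extension $U:\mathbb{C}^N\to\mathbb{C}^N$ with $U|_{A^\perp}=0$, and since $A=\ker(U)^\perp$ and $T=U|_A$ can be recovered from $U$, this is injective.

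For (1), I would first compute $U^*U$ and $UU^*$ for such an extension: the isometry property on $A$ together with $U|_{A^\perp}=0$ gives $U^*U=P_A$, and symmetrically $UU^*=P_B$, from which $UU^*U=UP_A=U$. For the converse, given $U$ with $UU^*U=U$, the operator $E=U^*U$ is self-adjoint and satisfies $E^2=U^*(UU^*U)=U^*U=E$, so it is a projection. Setting $A=\mathrm{Im}(E)$ and $B=\mathrm{Im}(U)$, the identity $\|Uv\|^2=\langle Ev,v\rangle$ shows that $U$ kills $A^\perp$ and acts isometrically on $A$, so it comes from some $T\in\widetilde{U}_N$.

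For (2), I would set $W=UP_{A'\cap B}V$, using $U^*U=P_{A'}$, $VV^*=P_B$, and the fact that the meet of range projections corresponds to the intersection of images. The claim is that $W$ coincides with the matrix extension of $T'T$, which I would verify on three subspaces. On $T^{-1}(A'\cap B)$, the vector $Vv=Tv$ already lies in $A'\cap B$, so $P_{A'\cap B}Vv=Tv$ and $Wv=U(Tv)=T'(Tv)$. On $A\ominus T^{-1}(A'\cap B)$, the isometric bijection $T:A\to B$ forces $Tv\perp A'\cap B$, so $P_{A'\cap B}Vv=0$. Finally on $A^\perp$ the map $V$ already vanishes. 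The main obstacle is precisely this middle step: one must exploit that $T$ is an isometric bijection $A\to B$ in order to translate the set-theoretic domain $T^{-1}(A'\cap B)$ into an orthogonality statement.

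For the real variants, the same construction gives embeddings $\widetilde{O}_N,\widetilde{B}_N\subset M_N(\mathbb{R})$, and the characterization $UU^tU=U$ is the real specialization of (1). The bistochasticity condition $T\xi_A=\xi_B$ would then be rewritten as follows: using $\xi_A=P_A\xi=U^*U\xi$ and $\xi_B=P_B\xi=UU^t\xi$, it becomes $UU^*U\xi=UU^t\xi$, and after collapsing $UU^*U=U$ this is exactly the stated identity $U\xi=UU^t\xi$.
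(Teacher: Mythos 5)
Your proposal is correct and follows essentially the same route as the paper: the extension-by-zero correspondence $T\leftrightarrow U$, the identification $U^*U=P_A$, $UU^*=P_B$, the meet $U^*U\wedge VV^*=P_{A'\cap B}$ for the composition, and the translation $T\xi_A=\xi_B\iff U\xi=UU^t\xi$ for $\widetilde{B}_N$. Your three-subspace verification of the composition formula in fact supplies the detail that the paper dismisses as ``a few other standard facts''; the only item the paper treats here that you omit is the check that $\widetilde{B}_N$ is closed under $U\circ V$, which is not literally part of the stated proposition.
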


\begin{proof}
All the assertions are elementary. For $C=A,B$ let $I_C:C\subset\mathbb R^N$ be the inclusion, and $P_C:\mathbb R^N\to C$ be the projection. The correspondence $T\leftrightarrow U$ is then given by:
$$\xymatrix@C=20mm{A\ar[r]^T&B\ar[d]^{I_B}\\\mathbb R^N\ar[u]^{P_A}\ar[r]_U&\mathbb R^N}\qquad\qquad
\xymatrix@C=20mm{A\ar[r]^T\ar[d]_{I_A}&B\\\mathbb R^N\ar[r]_U&\mathbb R^N\ar[u]_{P_B}}$$

The fact that the composition $U\circ V$ is indeed a partial isometry comes from the fact that the projections $U^*U$ and $VV^*$ are absorbed when performing the product:
$$U(U^*U\wedge VV^*)V\cdot V^*(U^*U\wedge VV^*)U^*\cdot U(U^*U\wedge VV^*)V=U(U^*U\wedge VV^*)V$$

Together with a few other standard facts, this finishes the proof for $\widetilde{U}_N$. The assertion about $\widetilde{O}_N$ follows from the one for $\widetilde{U}_N$. Finally, regarding $\widetilde{B}_N$, we have:
$$T\xi_A=\xi_B\iff TP_A\xi=P_B\xi\iff U\xi=UU^t\xi$$

We therefore have an embedding $\widetilde{B}_N\subset M_N(\mathbb R)$ as in the statement. In order to check now that $\widetilde{B}_N$ is indeed a semigroup, something that we have not done yet, observe first that $U\xi=UU^t\xi$ implies $U^tU\xi=U^tUU^t\xi=U^t\xi$. Thus, for $U,V\in\widetilde{B}_N$ we have:
\begin{eqnarray*}
(U\circ V)(U\circ V)^t\xi
&=&U(U^tU\wedge VV^t)V\cdot V^t(U^tU\wedge VV^t)U^t\xi\\
&=&U(U^tU\wedge VV^t)U^t\xi\\
&=&U(U^tU\wedge VV^t)U^tU\xi\\
&=&U(U^tU\wedge VV^t)\xi
\end{eqnarray*}

On the other hand, once again since projections are absorbed, we have as well:
$$(U\circ V)\xi=U(U^tU\wedge VV^t)V\xi=U(U^tU\wedge VV^t)VV^t\xi=U(U^tU\wedge VV^t)\xi$$

We conclude that $(U\circ V)\xi=(U\circ V)(U\circ V)^t\xi$, and so $\widetilde{B}_N$ is indeed a semigroup.
\end{proof}

Regarding now various functoriality issues, we first have:

\begin{proposition} 
We have group and semigroup inclusions as follows,
$$\begin{matrix}
B_N&\subset&O_N&\subset&U_N\\
\\
\cup&&\cup&&\cup\\
\\
S_N&\subset&H_N&\subset&K_N
\end{matrix}\qquad:\qquad
\begin{matrix}
\widetilde{B}_N&\subset&\widetilde{O}_N&\subset&\widetilde{U}_N\\
\\
\cup&&\cup&&\cup\\
\\
\widetilde{S}_N&\subset&\widetilde{H}_N&\subset&\widetilde{K}_N
\end{matrix}$$
with the groups on the left being embedded into the semigroups on the right.
\end{proposition}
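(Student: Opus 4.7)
The proof assembles three layers of inclusions: the horizontal chains in each row of the right diagram, the three vertical chains joining the discrete to the continuous semigroups, and the group-into-semigroup inclusions. Nearly all of these are tautological from Definition 4.1 and Proposition 3.2; the only point requiring a small computation is $\widetilde{S}_N\subset\widetilde{B}_N$.

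First I would handle the horizontal chains. The continuous chain $\widetilde{B}_N\subset\widetilde{O}_N\subset\widetilde{U}_N$ is immediate from Definition 4.1: a stochastic real isometry is in particular an isometry, and any real isometry $T:A\to B$ with $A,B\subset\mathbb{R}^N$ extends, via $\mathbb{R}^N\subset\mathbb{C}^N$, to a complex isometry between the complexified subspaces, with the composition rule $T'T:T^{-1}(A'\cap B)\to T'(A'\cap B)$ unchanged. The discrete chain $\widetilde{S}_N\subset\widetilde{H}_N\subset\widetilde{K}_N$ is Proposition 3.3 together with the evident inclusions $\mathbb{Z}_1\subset\mathbb{Z}_2\subset\mathbb{T}=\mathbb{Z}_\infty$ of coefficient groups in the wreath product $\widetilde{H}_N^x=\mathbb{Z}_x\wr\widetilde{S}_N$ from Proposition 3.2.

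Next, for the three vertical columns I would send a signed partial permutation $\sigma^\varepsilon\in\widetilde{H}_N^x$ with underlying partial bijection $\sigma:X\simeq Y$ to the partial isometry $T:A\to B$ defined by $A=\mathrm{span}(e_i:i\in X)$, $B=\mathrm{span}(e_j:j\in Y)$, and $T(e_i)=\varepsilon(i)\,e_{\sigma(i)}$. This $T$ is isometric, lies in $\widetilde{O}_N$ when $x=1,2$ and in $\widetilde{U}_N$ when $x=\infty$, and respects composition by standard wreath-product bookkeeping, giving $\widetilde{H}_N\subset\widetilde{O}_N$ and $\widetilde{K}_N\subset\widetilde{U}_N$. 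For the extra inclusion $\widetilde{S}_N\subset\widetilde{B}_N$ I must verify the stochastic condition: one has $\xi_A=\sum_{i\in X}e_i$ and $\xi_B=\sum_{j\in Y}e_j$, hence $T(\xi_A)=\sum_{i\in X}e_{\sigma(i)}=\sum_{j\in Y}e_j=\xi_B$, as required.

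Finally, for the group-into-semigroup inclusions $G\subset\widetilde{G}$, an element $g\in G$ is viewed as the partial isometry with full domain $\mathbb{R}^N$ or $\mathbb{C}^N$, and it automatically satisfies $UU^*U=U$ in the matrix model of Proposition 4.2. For $S_N,H_N,K_N\subset\widetilde{S}_N,\widetilde{H}_N,\widetilde{K}_N$ this was already recorded in Proposition 3.2(1); for $O_N,U_N$ the inclusions are tautological; and for $B_N\subset\widetilde{B}_N$ one notes that the bistochastic condition $g\xi=\xi$, with $A=B=\mathbb{R}^N$ and $\xi_A=\xi_B=\xi$, is precisely the stochastic condition $T\xi_A=\xi_B$. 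Compatibility with composition is automatic since on full-domain partial isometries the semigroup law $U\circ V=U(U^*U\wedge VV^*)V$ of Proposition 4.2(2) collapses to ordinary matrix multiplication. The only nontrivial verification in the whole argument is the stochastic check for $\widetilde{S}_N\subset\widetilde{B}_N$ above; everything else is a matter of unwinding the definitions and of observing that all of these correspondences are set-theoretic and semigroup-theoretic simultaneously.
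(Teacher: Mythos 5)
Your proposal is correct and follows essentially the same route as the paper's proof: horizontal chains from the definitions, vertical inclusions by letting signed partial permutations act on the coordinate subspaces, and the group-into-semigroup embeddings via the full-domain case, with the bistochastic condition $g\xi=\xi$ matching the stochastic condition $T\xi_A=\xi_B$. Your explicit verification of $T\xi_A=\xi_B$ for $\widetilde{S}_N\subset\widetilde{B}_N$ is a welcome detail that the paper leaves implicit.
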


\begin{proof}
The group inclusions are already known, from Proposition 3.1 above. Regarding now the semigroup inclusions, the lower row comes from the $x=1,2,\infty$ specialization at the end of section 3, and the upper row comes from Definition 4.1. Regarding now the vertical semigroup inclusions, these are once again clear from the definition of the various semigroups involved, because we can make act signed partial permutations on the corresponding linear spaces spanned by coordinates. Finally, the last assertion is already known for $S_N,H_N,K_N$, is clear from definitions for $O_N,U_N$, and comes from the fact that a matrix $U\in O_N$ has sum 1 on each row and column precisely when $U\xi=\xi$.
\end{proof}

In relation now to the formula in Proposition 4.2 (2), observe that all six compositions $B_N,\widetilde{S}_N\subset\widetilde{B}_N\subset M_N(\mathbb R)$ and $O_N,\widetilde{H}_N\subset\widetilde{O}_N\subset M_N(\mathbb R)$ and $U_N,\widetilde{K}_N\subset\widetilde{U}_N\subset M_N(\mathbb C)$ are semigroup maps, with respect to the usual multiplication of the $N\times N$ matrices.

Observe also that we have set-theoretic embeddings $\widetilde{B}_N\subset B_{2N}$, $\widetilde{O}_N\subset O_{2N}$, $\widetilde{U}_N\subset U_{2N}$, that can be obtained by adapting the formula in Proposition 1.4 above.

In general, the multiplication formula $U\circ V=U(U^*U\wedge VV^*)V$ in Proposition 4.2 (2) is of course unavoidable. In view of some forthcoming liberation purposes, we would need a functional analytic interpretation of it. We have here the following result:

\begin{proposition}
$C(\widetilde{U}_N)$ is the universal commutative $C^*$-algebra generated by the entries of a $N\times N$ matrix $u=(u_{ij})$ satisfying $uu^*u=u$, with comultiplication
$$(id\otimes\Delta)u=u_{12}(p_{13}\wedge q_{12})u_{13}=\lim_{n\to\infty}\underbrace{UU^*\ldots U^*U}_{2n+1\ {\rm terms}}$$
where $p=uu^*,q=u^*u$ and $U_{ij}=\sum_ku_{ik}\otimes u_{kj}$. Dividing by the relations $u_{ij}=u_{ij}^*$ produces $C(\widetilde{O}_N)$, and further dividing by the relations $u\xi=uu^t\xi$ produces $C(\widetilde{B}_N)$.
\end{proposition}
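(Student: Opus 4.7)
My plan is to identify $C(\widetilde U_N)$ via Gelfand duality, using the matrix characterization of $\widetilde U_N$ from Proposition 4.2, and then to read off the comultiplication from the semigroup law $U\circ V=U(U^*U\wedge VV^*)V$ proved there.

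For the universal property, Proposition 4.2 realises $\widetilde U_N$ as the closed subset of $M_N(\mathbb C)$ cut out by $UU^*U=U$; since such matrices are contractions, $\widetilde U_N$ is compact. The coordinate functions $u_{ij}$ separate points on $\widetilde U_N$, generate $C(\widetilde U_N)$ as a $C^*$-algebra by Stone--Weierstrass, and satisfy the defining relation. Conversely, given any commutative $C^*$-algebra $B$ with generators $v_{ij}$ satisfying $vv^*v=v$, the assignment $\chi\mapsto(\chi(v_{ij}))_{ij}$ embeds its Gelfand spectrum into $\widetilde U_N$, so $B$ is a quotient of $C(\widetilde U_N)$, which is exactly the claimed universal property.

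For the comultiplication I would set $\Delta(u_{ij})(U,V)=(U\circ V)_{ij}$, dualising the semigroup law. Evaluating $u_{12}(p_{13}\wedge q_{12})u_{13}$ at $(U,V)$ gives $U\,(U^*U\wedge VV^*)\,V=U\circ V$, using $p_{13}(U,V)=VV^*$ and $q_{12}(U,V)=U^*U$, so the first equality in the statement holds pointwise, hence as matrices over $C(\widetilde U_N\times\widetilde U_N)$. For the limit formula, write $U=u_{12}u_{13}$ in leg notation and use $u_{13}u_{13}^*=p_{13}$, $u_{12}^*u_{12}=q_{12}$ to check by induction that $U(U^*U)^n=u_{12}(p_{13}q_{12})^n u_{13}$; the length-$(2n{+}1)$ alternating product $UU^*\ldots U^*U$ equals $U(U^*U)^n$, so it remains to observe that $(PQ)^n\to P\wedge Q$ in norm for two matrix projections, a standard consequence in finite dimension of $\|PQ\|<1$ on $(P\wedge Q)^\perp$.

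For the orthogonal and bistochastic cases, Proposition 4.2 characterises $\widetilde O_N\subset\widetilde U_N$ by imposing reality $u=\bar u$, under which $uu^*u=u$ becomes $uu^tu=u$, and characterises $\widetilde B_N\subset\widetilde O_N$ by the further relation $u\xi=uu^t\xi$; the coproduct descends to both quotients since the corresponding subsets of $\widetilde U_N$ are closed under $\circ$. The main obstacle I foresee is the interpretation of $p_{13}\wedge q_{12}$ itself: the meet of two matrix-valued projections is not a continuous function of the projections, and in fact $U\circ V$ is not jointly continuous on $\widetilde U_N\times\widetilde U_N$, so one has to view the three expressions in the statement as being equal pointwise on the spectrum, with the limit formula providing the bridge that expresses the meet as a pointwise limit of concrete polynomials in the $u_{ij}$, which is what ultimately makes the coproduct well-defined at the algebraic level.
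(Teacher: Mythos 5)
Your proposal follows essentially the same route as the paper: Gelfand duality plus Stone--Weierstrass for the presentation, the transpose of the composition law $U\circ V=U(U^*U\wedge VV^*)V$ for the comultiplication, the identity $P\wedge Q=\lim_{n\to\infty}(PQ)^n$, and the leg-notation computation $UU^*\cdots U^*U=u_{12}(p_{13}q_{12})^n u_{13}$ (the paper reaches the $2n+1$-term product by first expanding $p=uu^*$, $q=u^*u$ in indices, but this is the same calculation). Your closing remark on the failure of joint continuity of $\circ$, and hence on the need to read the displayed equalities pointwise on the spectrum, is a fair caveat that the paper leaves implicit rather than addresses.
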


\begin{proof}
The various presentation results follow from Proposition 4.2, by using the Gelfand and Stone-Weierstrass theorems. Let us find now the comultiplication of $C(\widetilde{U}_N)$. This is the map given by $\Delta(u_{ij})=\Phi^{-1}(L_{ij})$, where $\Phi:C(\widetilde{U}_N)\otimes C(\widetilde{U}_N)\to C(\widetilde{U}_N\times \widetilde{U}_N)$ is the canonical isomorphism, and where $L_{ij}(U,V)=(U\circ V)_{ij}$. In order to write now this map $L_{ij}$ in tensor product form, we can use the formula $P\wedge Q=\lim_{n\to\infty}(PQ)^n$. More precisely, with $P=VV^*$ and $Q=U^*U$, we obtain the following formula:
$$(U\circ V)_{ij}=\sum_{kl}U_{ik}(P\wedge Q)_{kl}V_{lj}=\lim_{n\to\infty}\sum_{kl}U_{kl}(PQ)^n_{kl}V_{lj}$$

With $a_0=k,a_{2n}=l$, and by expanding the product, we obtain:
\begin{eqnarray*}
(U\circ V)_{ij}
&=&\lim_{n\to\infty}\sum_{a_0\ldots a_{2n}}U_{ia_0}P_{a_0a_1}Q_{a_1a_2}\ldots P_{a_{2n-2}a_{2n-1}}Q_{a_{2n-1}a_{2n}}V_{a_{2n}j}\\
&=&\lim_{n\to\infty}\sum_{a_0\ldots a_{2n}}U_{ia_0}Q_{a_1a_2}\ldots Q_{a_{2n-1}a_{2n}}\cdot P_{a_0a_1}\ldots P_{a_{2n-2}a_{2n-1}}V_{a_{2n}j}
\end{eqnarray*}

Now by getting back to $\Delta(u_{ij})=\Phi^{-1}(L_{ij})$, with $L_{ij}(U,V)=(U\circ V)_{ij}$, we conclude that we have the following formula, with $p=uu^*$ and $q=u^*u$:
$$\Delta(u_{ij})=\lim_{n\to\infty}\sum_{a_0\ldots a_{2n}}u_{ia_0}q_{a_1a_2}\ldots q_{a_{2n-1}a_{2n}}\otimes p_{a_0a_1}\ldots p_{a_{2n-2}a_{2n-1}}u_{a_{2n}j}$$

Let us expand now both matrix products $p=uu^*$ and $q=u^*u$. In terms of the element $U_{ij}=\sum_ku_{ik}\otimes u_{kj}$ in the statement, the sum on the right, say $S_{ij}^{(n)}$, becomes:
\begin{eqnarray*}
S_{ij}^{(n)}
&=&\sum_{a_s}u_{ia_0}(u^*u)_{a_1a_2}\ldots(u^*u)_{a_{2n-1}a_{2n}}\otimes(uu^*)_{a_0a_1}\ldots(uu^*)_{a_{2n-2}a_{2n-1}}u_{a_{2n}j}\\
&=&\sum_{a_sb_sc_s}u_{ia_0}u_{b_1a_1}^*u_{b_1a_2}\ldots u_{b_na_{2n-1}}^*u_{b_na_{2n}}\otimes u_{a_0c_1}u_{a_1c_1}^*\ldots u_{a_{2n-2}c_n}u_{a_{2n-1}c_n}^*u_{a_{2n}j}\\
&=&\sum_{b_sc_s}U_{ic_1}U_{b_1c_1}^*U_{b_1c_2}\ldots U_{b_nc_n}^*U_{b_nj}=(\underbrace{UU^*\ldots U^*U}_{2n+1\ {\rm terms}})_{ij}
\end{eqnarray*}

Thus we have obtained the second formula in the statement. Regarding now the first formula, observe that we have $U=u_{12}u_{13}$. This gives:
\begin{eqnarray*}
\underbrace{UU^*\ldots U^*U}_{2n+1\ {\rm terms}}
&=&(u_{12}u_{13})(u_{13}^*u_{12}^*)\ldots(u_{13}^*u_{12}^*)(u_{12}u_{13})\\
&=&u_{12}(u_{13}u_{13}^*)(u_{12}^*u_{12})\ldots(u_{13}u_{13}^*)(u_{12}^*u_{12})u_{13}\\
&=&u_{12}p_{13}q_{12}\ldots p_{13}q_{12}u_{13}
\end{eqnarray*}

Now since the product on the right converges in the $n\to\infty$ limit to $u_{12}(p_{13}\wedge q_{12})u_{13}$, this gives the first formula in the statement as well, and we are done.
\end{proof}

Observe that if we further assume that $u$ is unitary, or that its entries satisfy the condition $u_{ij}u_{ij}^*=p_{ij}$  (projection) with $p=(p_{ij})$ magic, then $UU^*U=U$, so the convergence in the formula of $\Delta$ is stationary, and we obtain $\Delta(u_{ij})=U_{ij}$. Thus, we can recover in this way the fact that both the inclusions $U_N,\widetilde{K}_N\subset\widetilde{U}_N\subset M_N(\mathbb C)$ are semigroup maps, with respect to the usual multiplication of the $N\times N$ matrices. We will be back to this observation, with full details directly in the free case, in Proposition 4.7 below.

Let us construct now the liberations. We have here the following definition:

\begin{definition}
To any $N\in\mathbb N$ we associate the following algebras,
\begin{eqnarray*}
C(\widetilde{U}_N^+)&=&C^*((u_{ij})_{i,j=1,\ldots,N}|uu^*=p,\bar{u}u^t=p',{\rm projections})\\
C(\widetilde{O}_N^+)&=&C^*((u_{ij})_{i,j=1,\ldots,N}|u_{ij}=u_{ij}^*,uu^t=p, {\rm projection})\\
C(\widetilde{B}_N^+)&=&C^*((u_{ij})_{i,j=1,\ldots,N}|u_{ij}=u_{ij}^*,uu^t=p,{\rm projection},u\xi=uu^t\xi)
\end{eqnarray*}
and we call the underlying objects $\widetilde{U}_N^+,\widetilde{O}_N^+,\widetilde{B}_N^+$ spaces of quantum partial isometries.
\end{definition}

As a first observation, due to the presentation results in Poroposition 4.4, we have inclusions $\widetilde{U}_N\subset\widetilde{U}_N^+$, $\widetilde{O}_N\subset\widetilde{O}_N^+$, $\widetilde{B}_N\subset\widetilde{B}_N^+$. We have as well a liberated version of Proposition 4.3, or rather of the last assertion there, the rest being already known.

These functoriality statements are best summarized as follows:

\begin{proposition}
By assuming that ``all projections'' in Definition 4.5 are $1$,
$$\begin{matrix}
\widetilde{B}_N^+&\subset&\widetilde{O}_N^+&\subset&\widetilde{U}_N^+\\
\\
\cup&&\cup&&\cup\\
\\
\widetilde{B}_N&\subset&\widetilde{O}_N&\subset&\widetilde{U}_N
\end{matrix}
\qquad\mathop{\longrightarrow}^{p=p'=q=q'=1}\qquad
\begin{matrix}
B_N^+&\subset&O_N^+&\subset&U_N^+\\
\\
\cup&&\cup&&\cup\\
\\
B_N&\subset&O_N&\subset&U_N
\end{matrix}$$
where $q=u^*u$ and $q'=u^t\bar{u}$, we obtain the compact quantum groups on the right.
\end{proposition}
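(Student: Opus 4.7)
The plan is to check, column by column, that imposing the four conditions $p=p'=q=q'=1$ collapses each algebra of Definition 4.5 onto Wang's presentation of the corresponding compact quantum group recalled in the Introduction.

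First I would record the standard partial-isometry identity that legitimizes the notations $q$ and $q'$ in the first place: if $p=uu^*$ is a projection, then setting $x=uu^*u-u$ one computes directly that $xx^*=p^3-2p^2+p=0$, so $x=0$ and $uu^*u=u$; consequently $u^*u=q$ is automatically a projection. The analogous argument applied to $\bar u u^t=p'$ gives that $u^t\bar u=q'$ is also a projection. This also clarifies why the statement imposes all four of $p,p',q,q'$ equal to $1$: the one-sided relation $uu^*=1$ does not force $u^*u=1$ in a general unital $C^*$-algebra, so the two extra conditions are genuinely needed.

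With this preliminary in hand, each column is routine. For the unitary column the four quotient relations read $uu^*=u^*u=1$ and $\bar u u^t=u^t\bar u=1$, which is precisely Wang's presentation of $C(U_N^+)$. For the orthogonal column, the self-adjointness $u_{ij}=u_{ij}^*$ identifies $u^*=u^t$ and $\bar u=u$, so the system collapses to $uu^t=u^tu=1$ with self-adjoint entries, matching Wang's $C(O_N^+)$. For the bistochastic column, the supplementary relation $u\xi=uu^t\xi$ reduces under $uu^t=1$ to $u\xi=\xi$, giving Wang's $C(B_N^+)$. The bottom row of classical groups follows by running the same argument inside the commutative quotients $C(\widetilde B_N),C(\widetilde O_N),C(\widetilde U_N)$, since the commutativity quotient and the projection quotient commute, and Gelfand duality then identifies the result with $C(B_N),C(O_N),C(U_N)$.

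The only step that really merits care is the initial $C^*$-derivation of $uu^*u=u$ from $(uu^*)^2=uu^*$, since it is what legitimizes the notations $q,q'$ and ensures that the result does not depend on the order in which the four projection conditions are imposed.
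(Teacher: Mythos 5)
Your proof is correct and follows essentially the same route as the paper: read off the relations $p=p'=q=q'=1$ as the unitarity of $u$ and $u^t$, match this with Wang's presentation of $C(U_N^+)$, add $u_{ij}=u_{ij}^*$ and $u\xi=\xi$ for the orthogonal and bistochastic columns, and pass to the commutative quotients plus Gelfand duality for the bottom row. Your preliminary observation that $q=u^*u$ and $q'=u^t\bar{u}$ are automatically projections (via $xx^*=0$ for $x=uu^*u-u$) is a pleasant extra that the paper leaves implicit, but it is not needed for the argument itself.
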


\begin{proof}
We recall that $O_N^+,U_N^+$ are the quantum groups constructed by Wang in \cite{wa1}, \cite{wa2}, and $B_N,B_N^+$ are the bistochastic group and its liberation, introduced in \cite{bsp}.

Let us check now that the algebras in Definition 4.5, when divided by the relations $p=p'=q=q'=1$, produce the algebras of continuous functions on these 6 quantum groups. Here we set, by definition, $p'=p$ and $q'=q$ in the 4 real cases.

In the case of $\widetilde{U}_N^+$, the relations $p=p'=q=q'=1$ read $uu^*=\bar{u}u^t=u^*u=u^t\bar{u}=1$, so we deduce that both $u,u^t$ are unitaries, and we are done. In the $\widetilde{O}_N^+$ case the extra relations are $u_{ij}=u_{ij}^*$, so we obtain the subgroup $O_N^+\subset U_N^+$. Also, in the $\widetilde{B}_N^+$ case the further extra relations are $u\xi=\xi$, so we obtain the subgroup $B_N^+\subset O_N^+$. 

The remaining assertions are now clear, because the subgroups of $U_N^+,O_N^+,B_N^+$ obtained by using the commutation relations $ab=ba$ are $U_N,O_N,B_N$. See \cite{bsp}.
\end{proof}

As a last functorial observation, recall from \cite{bsp} that we have isomorphisms $B_N\simeq O_{N-1}$ and $B_N^+\simeq O_{N-1}^+$, obtained by decomposing the fundamental representation $u=v+1$, by using the fixed vector $\xi$. In the semigroup case we don't have analogues of such results, as one can see by carefully examining the inclusions $\widetilde{B}_2\subset\widetilde{O}_2$ and $\widetilde{B}_2^+\subset\widetilde{O}_2^+$.

Let us discuss now the multiplicative structure. We have here:

\begin{proposition}
$\widetilde{U}_N^+,\widetilde{O}_N^+,\widetilde{B}_N^+$ have (non-associative) multiplications given by
$$(id\otimes\Delta)u=u_{12}(p_{13}\wedge q_{12})u_{13}=\lim_{n\to\infty}\underbrace{UU^*\ldots U^*U}_{2n+1\ {\rm terms}}$$
where $p=uu^*,q=u^*u$ and $U_{ij}=\sum_ku_{ik}\otimes u_{kj}$. The embeddings $\widetilde{B}_N,B_N^+,\widetilde{S}_N^+\subset\widetilde{B}_N^+$ and $\widetilde{O}_N,O_N^+,\widetilde{H}_N^+\subset\widetilde{O}_N^+$ and $\widetilde{U}_N,U_N^+,\widetilde{K}_N^+\subset\widetilde{U}_N^+$ commute with the multiplications.
\end{proposition}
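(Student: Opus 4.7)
The strategy is to adapt the calculation in the proof of Proposition 4.4 to the universal algebras, with the one new ingredient being that the meet of projections $p_{13}\wedge q_{12}$ must now be interpreted as a strong-operator limit of $(p_{13}q_{12})^n$ in a suitable completion, rather than pointwise in a commutative model as was done in Proposition 4.4.

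First I would establish the equivalence of the two displayed formulas by repeating the manipulation at the end of the proof of Proposition 4.4: expanding $U=u_{12}u_{13}$ and collecting the inner pairs $u_{13}u_{13}^*=p_{13}$ and $u_{12}^*u_{12}=q_{12}$ gives
$$\underbrace{UU^*\cdots U^*U}_{2n+1\text{ terms}}=u_{12}(p_{13}q_{12})^n u_{13},$$
and since $p_{13},q_{12}$ are projections, $(p_{13}q_{12})^n$ converges in SOT to $p_{13}\wedge q_{12}$, giving the asserted identity in the limit.

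Next I would check that $\Delta(u_{ij})$ defined by the formula respects the defining relations of Definition 4.5. Writing $V_{ij}=\Delta(u_{ij})$ and $R=p_{13}\wedge q_{12}$, a direct calculation using $R\leq p_{13}$ and $R\leq q_{12}$ gives
$$VV^*=u_{12}Rp_{13}Ru_{12}^*=u_{12}Ru_{12}^*,$$
which squares to itself and so is a projection; the analogous computation shows that $\bar Vu^t_{\,}V^t$ is also a projection. Self-adjointness in the orthogonal case is tautological, and the bistochastic relation $u\xi=uu^t\xi$ carries over by the same projection-absorption trick used in the final paragraph of the proof of Proposition 4.2. This shows $\Delta$ extends to a $*$-homomorphism on each of $C(\widetilde U_N^+),C(\widetilde O_N^+),C(\widetilde B_N^+)$; the parenthetical remark about non-associativity reflects the fact that iterating such a comultiplication mixes two SOT limits, which need not commute.

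For the functoriality of the embeddings, in each case the additional relations force $R$ to stabilize so that the formula reduces to the standard Hopf-algebraic coproduct $\sum_k u_{ik}\otimes u_{kj}$. Under the unitarity conditions defining $B_N^+,O_N^+,U_N^+$ one has $p=q=1$, hence $R=1$. Under the submagicness conditions defining $\widetilde S_N^+,\widetilde H_N^+,\widetilde K_N^+$ one has $UU^*U=U$ already at $n=1$ (as verified in the proofs of Proposition 2.8 and Proposition 3.5), so the $2n+1$-fold alternating product is identically $U$ for every $n$. Finally, the classical embeddings $\widetilde B_N,\widetilde O_N,\widetilde U_N\subset\widetilde B_N^+,\widetilde O_N^+,\widetilde U_N^+$ reduce to Proposition 4.4 by Gelfand duality. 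The main obstacle is the well-definedness issue in the first step: one has to work in a completion large enough to contain $p_{13}\wedge q_{12}$, and to verify that the resulting element lands in a version of $A\otimes A$ suitable for defining a (non-associative) comultiplication.
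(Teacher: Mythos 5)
Your proposal follows essentially the same route as the paper: identify the two formulas via the $UU^*\cdots U^*U=u_{12}(p_{13}q_{12})^nu_{13}$ expansion from Proposition 4.4, verify the relations of Definition 4.5 for the limit $W$ using $R\leq p_{13}$, $R\leq q_{12}$ (your check that $VV^*$ is idempotent is equivalent to the paper's $WW^*W=W$), handle the bistochastic case by the same absorption trick, and obtain functoriality by observing that the extra relations ($p=q=1$ for the quantum groups, submagicness forcing $UU^*U=U$ for $\widetilde{K}_N^+$ etc.) make the limit stationary. Your explicit flagging of the completion needed to make sense of $p_{13}\wedge q_{12}$ is a point the paper glosses over, but it does not change the argument.
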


\begin{proof}
First of all, the equality between the two matrices on the right in the statement follows as in the proof of Proposition 4.4. Let us call $W=(W_{ij})$ this matrix.

In order to check that $\Delta(u_{ij})=W_{ij}$ defines indeed a morphism, we must verify that $W=(W_{ij})$ satisfies the conditions in Definition 4.5. In the unitary case, we have:
\begin{eqnarray*}
WW^*W
&=&u_{12}(p_{13}\wedge q_{12})u_{13}\cdot u_{13}^*(p_{13}\wedge q_{12})u_{12}^*\cdot u_{12}(p_{13}\wedge q_{12})u_{13}\\
&=&u_{12}(p_{13}\wedge q_{12})p_{13}(p_{13}\wedge q_{12})p_{12}(p_{13}\wedge q_{12})u_{13}\\
&=&u_{12}(p_{13}\wedge q_{12})u_{13}=W
\end{eqnarray*}

The verification of the remaining condition $\bar{W}W^t\bar{W}=\bar{W}$ is similar, because $u\to\bar{u}$ transforms $U\to\bar{U}$, hence $W\to\bar{W}$. Now since when adding the relations $u_{ij}=u_{ij}^*$ we have $U=\bar{U}$, and so $W=\bar{W}$, we are done as well with the orthogonal case.

In the bistochastic case now, we use the following formula:
$$WW^t=u_{12}(p_{13}\wedge q_{12})u_{13}\cdot u_{13}^t(p_{13}\wedge q_{12})u_{12}^t=u_{12}(p_{13}\wedge q_{12})u_{12}^t$$

Observe that $u\xi=uu^t\xi$ implies $u^t\xi=u^tu\xi$ as well. We therefore obtain:
\begin{eqnarray*}
WW^t\xi
&=&u_{12}(p_{13}\wedge q_{12})u_{12}^t\xi\\
&=&u_{12}(p_{13}\wedge q_{12})u_{12}^tu_{12}\xi\\
&=&u_{12}(p_{13}\wedge q_{12})q_{12}\xi\\
&=&u_{12}(p_{13}\wedge q_{12})\xi
\end{eqnarray*}

On the other hand, we have as well the following computation:
\begin{eqnarray*}
W\xi
&=&u_{12}(p_{13}\wedge q_{12})u_{13}\xi\\
&=&u_{12}(p_{13}\wedge q_{12})u_{13}u_{13}^t\xi\\
&=&u_{12}(p_{13}\wedge q_{12})p_{13}\xi\\
&=&u_{12}(p_{13}\wedge q_{12})\xi
\end{eqnarray*}

Thus we have $WW^t\xi=W\xi$, and this finishes the proof in the bistochastic case.

Regarding now the last assertion, by functoriality it is enough to verify it in the unitary case. For $\widetilde{U}_N\subset\widetilde{U}_N^+$ this is clear. For $U_N^+\subset\widetilde{U}_N^+$ this is clear too, because with $p=q=1$ we obtain $(id\otimes\Delta)u=U$, which is the usual comultiplication formula for $C(U_N^+)$.

Thus, it remains to check that $\widetilde{K}_N^+\subset\widetilde{U}_N^+$ commutes with the multiplications. According to the definition of $\widetilde{K}_N^+$ from section 3 above, the relations which produce it, from the bigger space $\widetilde{U}_N^+$, are $u_{ij}u_{ij}^*=r_{ij}$ (projections), and $r=(r_{ij})$ submagic. We have:
\begin{eqnarray*}
(UU^*U)_{ij}
&=&\sum_{kl}U_{ik}U_{lk}^*U_{lj}\\
&=&\sum_{kl}\sum_{abc}u_{ia}u_{lb}^*u_{lc}\otimes u_{ak}u_{bk}^*u_{cj}\\
&=&\sum_{kl}\sum_au_{ia}r_{la}\otimes r_{ak}u_{aj}\\
&=&\sum_au_{ia}\otimes u_{aj}=U_{ij}
\end{eqnarray*}

Here we have used several times the observation, from the proof of Proposition 3.4 above, that $ab=ab^*=0$, for any $a,b$ distinct coordinates, on the same row or column.

We conclude from $UU^*U=U$ that when passing to $\widetilde{K}_N^+$ the comultiplication becomes $\Delta(u_{ij})=U_{ij}$. But this is the usual comultiplication of $\widetilde{K}_N^+$, and we are done.
\end{proof}

The fact that $\widetilde{U}_N^+$, while containing the semigroups $\widetilde{U}_N,U_N^+,\widetilde{K}_N^+$, it not itself a semigroup, might seem quite surprising. The first thought would be that $\widetilde{U}_N^+$, as defined above, is just too big. It seems impossible, however, to find a reasonable smaller semigroup, containing $\widetilde{U}_N,U_N^+,\widetilde{K}_N^+$. We will see in section 6 below that these issues dissapear when talking about half-liberations, with the construction of a semigroup $\widetilde{U}_N^\times$.

Let us discuss now probabilistic aspects. We will see that, while our spaces $\widetilde{U}_N^+,\widetilde{O}_N^+,\widetilde{B}_N^+$ are not semigroups, the Bercovici-Pata bijection criterion is satisfied for them. 

We use the method in section 3. We first need an extension of the ``$\sigma=\alpha\beta\gamma$'' trick. So, pick a group $G_N\in\{O_N,U_N,B_N\}$, pick as well an exponent $\circ\in\{\emptyset,+\}$, set $\kappa=\sum_{ij}u_{ij}u_{ij}^*$, and consider the algebra $C(\widetilde{G}_N^{\circ(k)})=C(\widetilde{G}_N^{\circ})/<\kappa=k>$. We have then:

\begin{proposition}
For $G_N\in\{O_N,U_N,B_N\}$ and $\circ\in\{\emptyset,+\}$ we have a representation
$$\pi_k:C(\widetilde{G}_N^{\circ(k)})\to C(G_N^\circ\times G_k^\circ\times G_N^\circ)\quad:\quad\pi_k(u_{ij})=\sum_{s,t\leq k}p_{is}\otimes q_{st}\otimes r_{tj}$$
which commutes with the Haar functionals at $k=N$.
\end{proposition}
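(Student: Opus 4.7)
The plan is to mirror the three-step strategy of Propositions 2.8 and 3.5: construct a candidate $*$-homomorphism on $C(\widetilde{G}_N^\circ)$ via the given formula, show that its image of $\kappa$ is the scalar $k$ so it factors through the quotient $C(\widetilde{G}_N^{\circ(k)})$, and verify the Haar commutation at $k=N$ by reduction to Woronowicz's projection formula.

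First I would set $U_{ij}=\sum_{s,t\leq k}p_{is}\otimes q_{st}\otimes r_{tj}$ and check, case by case, that $U=(U_{ij})$ satisfies the defining relations of Definition 4.5. For the unitary case, direct expansion gives
$$(UU^*)_{ij}=\sum_l\sum_{s,t,v,w\leq k}p_{is}p_{jv}^*\otimes q_{st}q_{vw}^*\otimes r_{tl}r_{wl}^*=\sum_{s\leq k}p_{is}p_{js}^*\otimes 1\otimes 1,$$
where the first reduction uses $\sum_l r_{tl}r_{wl}^*=(rr^*)_{tw}=\delta_{tw}$ (as $r$ is a biunitary on $G_N^\circ$), and the second uses $\sum_{t\leq k}q_{st}q_{vt}^*=(qq^*)_{sv}=\delta_{sv}$ (as $q$ is a biunitary on $G_k^\circ$). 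The resulting matrix $P_{ij}=\sum_{s\leq k}p_{is}p_{js}^*$ is a projection, as $(P^2)_{ij}=\sum_{s,s',l}p_{is}p_{ls}^*p_{ls'}p_{js'}^*=\sum_s p_{is}p_{js}^*=P_{ij}$ via $\sum_l p_{ls}^*p_{ls'}=\delta_{ss'}$, with self-adjointness immediate. A mirror computation yields $\bar{U}U^t=$ projection, using $r^t\bar{r}=1$ and $q^t\bar{q}=1_k$. The orthogonal case specializes by self-adjointness of the generators $p,q,r$. For the bistochastic case, the further relation $U\xi=UU^t\xi$ must be verified: collapsing the $j$- and $t$-sums in $(U\xi)_i$ via $\sum_j r_{tj}=1$ and $\sum_{t\leq k}q_{st}=1$ (bistochasticity of $q,r$) yields $\sum_{s\leq k}p_{is}\otimes 1\otimes 1$, while the formula $(UU^t)_{ij}=\sum_{s\leq k}p_{is}p_{js}\otimes 1\otimes 1$ (specialized to the real case) combined with $\sum_j p_{js}=1$ produces the same expression.

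Next, I would show that $\pi_k$ vanishes on $\langle\kappa=k\rangle$ via the computation
$$\sum_{ij}U_{ij}U_{ij}^*=\sum_{ij}\sum_{s,t,v,w\leq k}p_{is}p_{iv}^*\otimes q_{st}q_{vw}^*\otimes r_{tj}r_{wj}^*=\sum_{s,t\leq k}1\otimes q_{st}q_{st}^*\otimes 1=k,$$
collapsing successively by $\sum_i p_{is}p_{iv}^*=\delta_{sv}$, $\sum_j r_{tj}r_{wj}^*=\delta_{tw}$, and finally $\sum_{s,t\leq k}q_{st}q_{st}^*=\mathrm{tr}(qq^*)=k$, since $q$ is a $k\times k$ biunitary.

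Finally, for the Haar-commutation at $k=N$, I would invoke the Woronowicz projection formula: the Haar integral $\int_{G_N^\circ}u_{i_1j_1}^{e_1}\cdots u_{i_nj_n}^{e_n}$ equals $(P_n)_{i_1\ldots i_n,\,j_1\ldots j_n}$, where $P_n$ is the orthogonal projection onto $\mathrm{Fix}(u^{e_1}\otimes\cdots\otimes u^{e_n})$. Applied to each of the three independent tensor factors of $(h^{\otimes 3})(\pi_N(u_{i_1j_1}^{e_1})\cdots\pi_N(u_{i_nj_n}^{e_n}))$ and summing over the intermediate indices $s_a,t_a$, this yields $(P_nP_nP_n)_{i,j}=(P_n)_{i,j}$, the second equality by idempotency, matching $\int_{G_N^\circ}u_{i_1j_1}^{e_1}\cdots u_{i_nj_n}^{e_n}$ as required. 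The main obstacle I anticipate is the bistochastic verification in Step~1, where the $\xi$-invariance conditions $p\xi=q\xi=r\xi=\xi$ must be combined delicately with the projection structure of $UU^t$; the Haar-commutation step itself is essentially formal once the projection formula is in hand, exactly as in Proposition 2.8~(4).
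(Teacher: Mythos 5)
Your proposal is correct and follows essentially the same route as the paper's proof: the same candidate matrix $U_{ij}=\sum_{s,t\leq k}p_{is}\otimes q_{st}\otimes r_{tj}$, the same index-collapsing computations using (bi)unitarity of $p,q,r$ and $\xi$-invariance in the bistochastic case (your check that $UU^*$ is a projection is equivalent to the paper's check that $UU^*U=U$), the identical $\sum_{ij}U_{ij}U_{ij}^*=k$ verification, and the same $P^3=P$ Woronowicz-projection argument, borrowed from Proposition 2.8~(4), for the Haar commutation at $k=N$. The only divergence is that the paper handles the classical case $\circ=\emptyset$ separately via the geometric decomposition $T=UVW$ of a partial isometry (which additionally shows that the associated map $\varphi:G_N\times G_k\times G_N\to\widetilde{G}_N^{(k)}$ is surjective), whereas you obtain it uniformly from the same algebraic verification through the presentation of Proposition 4.4; both suffice for the statement as given.
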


\begin{proof}
In the classical case, denote by $\mathbb K=\mathbb R,\mathbb C$ the ground field. The first observation is that any partial isometry $T:A\to B$, with the spaces $A,B\subset\mathbb K^N$ having dimension $\dim(A)=\dim(B)=k$, decomposes as $T=UVW$, with $U,W\in G_N$ and $V\in G_k$:
$$\xymatrix@C=20mm{A\ar[r]^T\ar[d]_W&B\\\mathbb K^k\ar[r]_V&\mathbb K^k\ar[u]_U}$$

For $O_N,U_N$ this is indeed clear, and for $B_N$ this follows from the computations below.

We conclude that we have a surjection $\varphi:G_N\times G_k\times G_N\to\widetilde{G}_N^{(k)}$ mapping $(U,V,W)$ to the partial isometry $T:W^{-1}(\mathbb K^k)\to U(\mathbb K^k)$ given by $T(W^{-1}x)=U(Vx)$. By proceeding now as in the proof of Proposition 2.7 (2) above, we see that the transpose map $\pi=\varphi^*$ is the representation in the statement, and we are done with the classical case.

In the free case, this is a straightforward extension of Proposition 2.8 above. Let us first check that the matrix $U=(U_{ij})$ formed by the elements appearing on the right in the statement satisfies the partial isometry condition. We have:
\begin{eqnarray*}
(UU^*U)_{ij}
&=&\sum_{kl}U_{ik}U_{lk}^*U_{lj}\\
&=&\sum_{kl}\sum_{s,t\leq k}\sum_{v,w\leq k}\sum_{y,z\leq k}p_{is}p_{lv}^*p_{ly}\otimes q_{st}q_{vw}^*q_{yz}\otimes r_{tk}r_{wk}^*r_{zj}\\
&=&\sum_{s,t\leq k}\sum_{y,z\leq k}p_{is}\otimes q_{st}q_{yt}^*q_{yz}\otimes r_{zj}\\
&=&\sum_{s,z\leq k}p_{is}\otimes q_{sz}\otimes r_{zj}=U_{ij}
\end{eqnarray*}

Since $u_{ij}=u_{ij}^*$ implies $U_{ij}=U_{ij}^*$, this proves the partial isometry condition in the orthogonal case too. Regarding now the bistochastic condition, we first have:
\begin{eqnarray*}
(U\xi)_i
&=&\sum_jU_{ij}=\sum_j\sum_{s,t\leq k}p_{is}\otimes q_{st}\otimes r_{tj}\\
&=&\sum_{s,t\leq k}p_{is}\otimes q_{st}\otimes 1=\sum_{s\leq k}p_{is}\otimes 1\otimes 1
\end{eqnarray*}

We have as well the following computation:
\begin{eqnarray*}
(UU^t\xi)_i
&=&\sum_{jk}U_{jk}U_{ik}
=\sum_{jk}\sum_{s,t\leq k}\sum_{v,w\leq k}p_{js}p_{iv}\otimes q_{st}q_{vw}\otimes r_{tk}r_{wk}\\
&=&\sum_{s,t\leq k}\sum_{v\leq k}p_{iv}\otimes q_{st}q_{vt}\otimes 1
=\sum_{s\leq k}p_{is}\otimes 1\otimes 1
\end{eqnarray*}

Thus we have $U\xi=UU^t\xi$, as desired. Let us ckeck now that the representation that we have just constructed vanishes on the ideal $<\kappa=k>$. We have:
\begin{eqnarray*}
\sum_{ij}U_{ij}U_{ij}^*
&=&\sum_{ij}\sum_{s,t\leq k}\sum_{v,w\leq k}p_{is}p_{iv}^*\otimes q_{st}q_{vw}^*\otimes r_{tj}r_{wj}^*\\
&=&\sum_{s,t\leq k}1\otimes q_{st}q_{st}^*\otimes 1=k
\end{eqnarray*}

Thus we have a representation $\pi_k$ as in the statement. Finally, the last assertion is already known, from the proof of Proposition 2.8 (3).
\end{proof}

With the above result in hand, we can construct measures $\mu_k^l$ as in the discrete case, $\mu_k^l=law(\chi_k^l)$ with $\chi_k^l=\pi_k(\chi_l)$, and we have the following result:

\begin{theorem}
$\widetilde{G}_N\to\widetilde{G}_N^+$ with $G_N=O_N,U_N,B_N$ is a liberation, in the sense that we have the Bercovici-Pata bijection for $\mu_k^l$, in the $k=sN,l=tN,N\to\infty$ limit.
\end{theorem}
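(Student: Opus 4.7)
The plan is to follow the Weingarten moment method used in the proofs of Theorems 2.9 and 3.6, now applied to continuous Haar integration on the groups $G_N^\circ\in\{O_N^{(+)},U_N^{(+)},B_N^{(+)}\}$.

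First I unfold the definition of $\pi_k$ to express the $n$-th $*$-moment of $\chi_k^l=\pi_k(\chi_l)$ as a triple integral indexed by $i_a\leq l$ and $s_a,t_a\leq k$, with one Haar integral per tensor leg over $G_N^\circ\times G_k^\circ\times G_N^\circ$. Applying the Weingarten formula to each of the three factors converts each leg into a double sum over the category of partitions $D(e_1,\ldots,e_n)$ associated to the group in question: namely $D=P_2,\mathcal{P}_2,P_{12}$ for $O_N,U_N,B_N$ respectively, and $D=NC_2,\mathcal{NC}_2,NC_{12}$ for their free analogues. Summing over the $i_a,s_a,t_a$ indices produces Kronecker constraints measured by pairwise joins of partitions, so the moment takes the form
$$\sum_{\alpha,\beta,\gamma,\delta,\varepsilon,\rho\in D}W_{nN}(\alpha,\beta)\,W_{nk}(\gamma,\delta)\,W_{nN}(\varepsilon,\rho)\cdot l^{|\alpha\vee\rho|}\,k^{|\beta\vee\gamma|}\,k^{|\delta\vee\varepsilon|}$$
in complete parallel with the computation carried out in the proof of Theorem 2.9.

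Next I take the limit $k=sN,\,l=tN,\,N\to\infty$, using from \cite{bc2} that the Gram and Weingarten matrices concentrate on the diagonal, and that $|\pi\vee\sigma|\leq(|\pi|+|\sigma|)/2$, with equality iff $\pi=\sigma$. Exactly as in the end of the proof of Theorem 2.9, only the terms with $\alpha=\gamma=\varepsilon$ survive at leading order, all $N$-dependence cancels, and one is left with
$$\int(\chi_k^l)^{e_1}\cdots(\chi_k^l)^{e_n}\;\simeq\;\sum_{\alpha\in D(e_1,\ldots,e_n)}(st)^{|\alpha|}.$$
For $D=P_2,\mathcal{P}_2,P_{12}$ this is the Wick $*$-moment formula for a real Gaussian, complex Gaussian, and shifted real Gaussian of parameter $st$, respectively; for $D=NC_2,\mathcal{NC}_2,NC_{12}$ it is the corresponding semicircular, circular, and shifted semicircular law of parameter $st$. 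Since these classical/free pairs are precisely those matched by the Bercovici-Pata bijection \cite{bpa}, the liberation claim follows.

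The main obstacle will be the bistochastic case. The category $P_{12}/NC_{12}$ contains singletons, which interact nontrivially with the extra relation $u\xi=uu^t\xi$: one must verify that the Weingarten integrals on $B_N^{(+)}$ plug into the three-fold product in such a way that the singleton contributions on the three legs combine consistently, and that the resulting shift of the limiting law on the free side matches its classical counterpart under Bercovici-Pata. Once this bookkeeping is checked, the orthogonal and unitary cases reduce to near-verbatim extensions of Theorem 2.9.
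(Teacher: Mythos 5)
Your proposal follows essentially the same route as the paper: the paper's proof likewise invokes the Weingarten computation from Theorem 2.9 to obtain $\lim_{N\to\infty}\int(\chi_k^l)^n=\sum_{\alpha\in D(n)}(st)^{|\alpha|}$, identifies this with the known limiting laws of truncated characters from \cite{bc1}, \cite{bsp}, \cite{csn} (real and complex Gaussian, shifted real Gaussian, and their free analogues), and concludes via the Bercovici--Pata bijection, the key structural point being that each category $D$ is stable under removing blocks. Your extra care about the singleton blocks in the bistochastic case is a reasonable verification but not a genuinely different argument.
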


\begin{proof}
This follows by using standard integration technology, from \cite{bc1}, \cite{bsp}, \cite{csn}.

More precisely, the Weingarten computation in the proof of Theorem 2.9 above gives the following formula, in the $k=sN$, $l=tN$, $N\to\infty$ limit, where $D(n)\subset P(n)$ denotes the set of partitions associated to the quantum group $G_N^\circ$ under consideration:
$$\lim_{N\to\infty}\int_{G_N^\circ\times G_k^\circ\times G_N^\circ}(\chi_k^l)^n=\sum_{\alpha\in D(n)}(st)^{|\alpha|}$$

On the other hand, we know from \cite{bc1}, \cite{bsp}, \cite{csn} that the law of the truncated character $\chi_l$ is given by the following formula, in the $l=tN$, $N\to\infty$ limit:
$$\lim_{N\to\infty}\int_{G_N^\circ}(\chi_l)^n=\sum_{\alpha\in D(n)}t^{|\alpha|}$$

Since in the unitary case we have similar formulae for $*$-moments, we conclude that in the $k=sN$, $l=tN$, $N\to\infty$ limit, we have the following equality of distributions:
$$\lim_{N\to\infty}\mu_k^l=\lim_{N\to\infty}\mu_N^{sl}$$

With this observation in hand, the Bercovici-Pata bijection follows from the various results in \cite{bc1}, \cite{bsp}, \cite{csn}, and basically comes from the fact that, for the quantum groups under consideration, the corresponding category of partitions $D\subset P$ is stable by removing blocks. More precisely, we obtain in this way real and complex Gaussian variables, shifted real Gaussian variables, and their free analogues. See \cite{bc1}, \cite{bsp}, \cite{csn}.
\end{proof}

Summarizing, we have now liberation results for $\widetilde{S}_N,\widetilde{H}_N,\widetilde{K}_N,\widetilde{B}_N,\widetilde{O}_N,\widetilde{U}_N$. One interesting question is that of finding the exact unitary easy quantum groups, or perhaps even generalizations, for which such results hold. For some key ingredients in dealing with such questions, we refer to the recent work of Raum-Weber \cite{rwe} and Freslon \cite{fre}.

\section{Orthogonal half-liberation}

In the reminder of this paper we discuss the half-liberation question for the semigroups $\widetilde{S}_N,\widetilde{H}_N,\widetilde{K}_N,\widetilde{B}_N,\widetilde{O}_N,\widetilde{U}_N$. There are several questions to be solved here, first because in the unitary group case already the half-liberation operation is not unique, and second because in the semigroup case we have some specific semigroup issues as well.

In the orthogonal group case, the basic half-liberation $O_N\to O_N^*$ was introduced in our joint work with Speicher \cite{bsp}, and was systematically studied in our paper with Vergnioux \cite{bv2}. Later on, Bichon and Dubois-Violette found in \cite{bdu} an axiomatic approach to the half-liberation operation $G\to G^*$ in the general orthogonal case, $G\subset O_N$. 

In the unitary group case the situation is more complicated, because the half-liberation is not unique. For the unitary group itself, a first key proposal, $U_N\to U_N^*$, was made by Bhowmick, D'Andrea and Dabrowski \cite{bdd}. A second proposal, $U_N\to U_N^{**}$, was made by Bichon and Dubois-Violette in \cite{bdu}. As observed in \cite{bdd}, \cite{bdu}, some other definitions for a half-liberation of $U_N$ are possible. We will discuss these issues in section 6 below.

In this section we discuss the orthogonal case, concerning $\widetilde{S}_N,\widetilde{H}_N,\widetilde{B}_N,\widetilde{O}_N$. We first construct some ``pre-half-liberations'' of these semigroups, as follows:

\begin{proposition}
Consider the subspace $\widetilde{O}_N^*\subset\widetilde{O}_N^+$ obtained by making the standard coordinates $u_{ij}$ half-commute, $abc=cba$. We have then embeddings as follows:
$$\begin{matrix}
\widetilde{O}_N&\subset&\widetilde{O}_N^*&\subset&\widetilde{O}_N^+\\
\\
\cup&&\cup&&\cup\\
\\
O_N&\subset&O_N^*&\subset&O_N^+\
\end{matrix}$$
We have $\widetilde{S}_N=\widetilde{S}_N^+\cap\widetilde{O}_N^*$, and similar diagrams for $\widetilde{H}_N^*=\widetilde{H}_N^+\cap\widetilde{O}_N^*$ and $\widetilde{B}_N^*=\widetilde{B}_N\cap\widetilde{O}_N^*$.
\end{proposition}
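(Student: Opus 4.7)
The plan is to define $C(\widetilde{O}_N^*)$ as the quotient of $C(\widetilde{O}_N^+)$ by the closed $\ast$-ideal generated by $\{abc-cba : a,b,c\in\{u_{ij}\}\}$, deduce all six embeddings by universal-property arguments, and reduce the intersection identities to one elementary observation about submagic projections.

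Here is the setup. The quotient $C(\widetilde{O}_N^*)$ is well-defined as a $C^*$-algebra: the added relations $abc=cba$ are $\ast$-invariant (the $u_{ij}$ being self-adjoint in the orthogonal case) and manifestly compatible with the relations $uu^t=p$, $p$ a projection, that present $C(\widetilde{O}_N^+)$. This immediately gives the horizontal inclusion $\widetilde{O}_N^*\subset\widetilde{O}_N^+$. The inclusion $\widetilde{O}_N\subset\widetilde{O}_N^*$ is automatic, since the canonical surjection $C(\widetilde{O}_N^+)\to C(\widetilde{O}_N)$ of Proposition 4.6 lands in a commutative target and therefore kills $abc-cba$, factoring through $C(\widetilde{O}_N^*)$. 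For the vertical embeddings $O_N^\circ\subset\widetilde{O}_N^\circ$, the cases $\circ=\emptyset,+$ are Proposition 4.6. For $\circ=*$, recall from \cite{bsp}, \cite{bv2}, \cite{bdu} that $C(O_N^*)$ is by definition $C(O_N^+)$ modulo the same half-commutation relations; hence the surjection $C(\widetilde{O}_N^+)\to C(O_N^+)$ descends to a surjection $C(\widetilde{O}_N^*)\to C(O_N^*)$, giving $O_N^*\subset\widetilde{O}_N^*$.

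The intersection identity $\widetilde{S}_N=\widetilde{S}_N^+\cap\widetilde{O}_N^*$ is the only step with real content. One direction is obvious. For the other, consider the joint quotient algebra in which the coordinates are simultaneously submagic (so each $u_{ij}$ is a projection) and half-commuting. Applying $abc=cba$ to the triple $a=u_{ij}$, $b=c=u_{kl}$ yields
\[
u_{ij}u_{kl} \;=\; u_{ij}u_{kl}^2 \;=\; u_{kl}^2 u_{ij} \;=\; u_{kl}u_{ij},
\]
using $u_{kl}^2=u_{kl}$. Thus the joint quotient is commutative, and by Gelfand duality it is $C(\widetilde{S}_N)$.

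Finally, for $\widetilde{H}_N^*=\widetilde{H}_N^+\cap\widetilde{O}_N^*$ and $\widetilde{B}_N^*=\widetilde{B}_N^+\cap\widetilde{O}_N^*$ (treating the apparent typo $\widetilde{B}_N\mapsto\widetilde{B}_N^+$ in the statement), the claimed ``similar diagrams'' follow at once by functoriality: the vertical inclusions $H_N^\circ\subset\widetilde{H}_N^\circ$ and $B_N^\circ\subset\widetilde{B}_N^\circ$ at $\circ=*$ come from the same quotient construction applied to the smaller presentations, and the horizontal ones are inherited by restriction from the $\widetilde{O}$ diagram. The main obstacle throughout is the little projection identity $pq^2=q^2p\Rightarrow pq=qp$ displayed above: elementary but essential, since it is precisely the mechanism by which the submagic condition collapses half-liberation to no liberation at the permutation level, and by which the orthogonal half-liberation descends correctly to the discrete subsemigroups.
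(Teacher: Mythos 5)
Your proposal is correct and follows essentially the same route as the paper: the functoriality assertions are handled by universal-property arguments (the paper simply calls them ``clear''), and the key identity $\widetilde{S}_N=\widetilde{S}_N^+\cap\widetilde{O}_N^*$ is established by exactly the paper's mechanism, namely that $abc=cba$ with $b=c$ gives $ab^2=b^2a$, which for projections collapses to $ab=ba$. Your reading of $\widetilde{B}_N$ as a typo for $\widetilde{B}_N^+$ is also the natural one.
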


\begin{proof}
All the functoriality assertions are clear. Regarding the equality $\widetilde{S}_N=\widetilde{S}_N^+\cap\widetilde{O}_N^*$, this comes from the fact the relations $abc=cba$ give $ab^2=b^2a$, so for idempotents ($p=p^2$) these relations collapse to the usual commutation relations $ab=ba$.
\end{proof}

As a first observation, the above statement contains a first subtlety appearing in the semigroup case, with the lack of an analogue of the result $B_N^*=B_N$. Recall indeed from \cite{bsp} that this latter equality holds, because from $abc=cba$ we obtain $ab=ba$, simply by summing over $c=u_{11},\ldots, u_{1N}$. In the semigroup case no such trick is available. This is to be related to the observations in section 4 above, regarding the lack of semigroup extensions of the well-known isomorphisms $B_N\simeq O_{N-1}$ and $B_N^+\simeq O_{N-1}^+$.

Let us discuss now the construction of the multiplication:

\begin{proposition}
$\widetilde{H}_N^*,\widetilde{B}_N^*,\widetilde{O}_N^*$ all have multiplications, as follows:
\begin{enumerate}
\item $\widetilde{H}_N^*$ is a semigroup, with $\Delta(u_{ij})=\sum_ku_{ik}\otimes u_{kj}$ and $\varepsilon(u_{ij})=\delta_{ij}$.

\item $\widetilde{B}_N^*,\widetilde{O}_N^*$ are stable under the multiplications of the spaces $\widetilde{B}_N^+,\widetilde{O}_N^+$.
\end{enumerate}
\end{proposition}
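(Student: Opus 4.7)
The strategy is to show that the comultiplication formulas from Propositions 3.4 and 4.7 on the parent spaces $\widetilde{H}_N^+$, $\widetilde{O}_N^+$, and $\widetilde{B}_N^+$ descend modulo the half-commutation ideal. Concretely, writing $\pi$ for the quotient map $C(\widetilde{O}_N^+)\to C(\widetilde{O}_N^*)$ (and its analogues in the other cases), one must check that $(\pi\otimes\pi)\circ\Delta$ annihilates the relations $u_{ij}u_{kl}u_{mn}-u_{mn}u_{kl}u_{ij}$.

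For part (1), the multiplication of $\widetilde{H}_N^+$ is the algebraic formula $\Delta(u_{ij})=U_{ij}=\sum_ku_{ik}\otimes u_{kj}$, and by Proposition 3.4 the $U_{ij}$ already satisfy the $\widetilde{H}_N^+$ relations in the tensor product, so only half-commutation needs to be verified. Expanding,
\[U_{ij}U_{kl}U_{mn}=\sum_{a,b,c}u_{ia}u_{kb}u_{mc}\otimes u_{aj}u_{bl}u_{cn},\]
and applying $abc=cba$ separately in each tensor factor (to the generator triples) gives $\sum_{a,b,c}u_{mc}u_{kb}u_{ia}\otimes u_{cn}u_{bl}u_{aj}=U_{mn}U_{kl}U_{ij}$. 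The counit $\varepsilon(u_{ij})=\delta_{ij}$ descends trivially as its image is scalar, and coassociativity of the induced $\Delta$ on $C(\widetilde{H}_N^*)$ follows from that on $C(\widetilde{H}_N^+)$ by surjectivity of $\pi\otimes\pi$, giving the semigroup structure.

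For part (2), the multiplication is the analytic formula $\Delta(u_{ij})=W_{ij}$ with $W=\lim_n P_n$, $P_n=\underbrace{UU^*\cdots U^*U}_{2n+1\text{ factors}}$, and by Proposition 4.7 the $W_{ij}$ already satisfy the $\widetilde{O}_N^+$ (respectively $\widetilde{B}_N^+$) relations. The essential algebraic ingredient is an \emph{odd-length reversal lemma}: in any $*$-algebra whose generators satisfy $abc=cba$ for all triples, an odd-length product of generators equals its reversed product, $g_1\cdots g_{2m+1}=g_{2m+1}\cdots g_1$. This is proved by induction on $m$: given the length-$(2m+1)$ case, write a length-$(2m+3)$ word with a length-$(2m+1)$ suffix, reverse the suffix by the inductive hypothesis, and then ``bubble'' the remaining two-letter prefix rightward using the identity $ab\cdot c=c\cdot ba$, which reverses the pair at each step; after an even number of bubblings the pair is at the far end in its original order, yielding the full reversal.

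Given this lemma, each $(P_n)_{ij}$ is a sum of monomials of odd length $2n+1$ in each tensor factor, and a triple product $(P_n)_{ij}(P_n)_{kl}(P_n)_{mn}$ yields monomials of odd total length $3(2n+1)$. Applying odd-length reversal in each tensor factor globally reverses the whole monomial, and applying it back inside each of the three length-$(2n+1)$ blocks restores their internal orderings; the net effect is to exchange the outer blocks, so $(P_n)_{ij}(P_n)_{kl}(P_n)_{mn}=(P_n)_{mn}(P_n)_{kl}(P_n)_{ij}$ holds in $C(\widetilde{O}_N^*)\otimes C(\widetilde{O}_N^*)$. Norm-continuity of multiplication and of $\pi\otimes\pi$ then yields $W_{ij}W_{kl}W_{mn}=W_{mn}W_{kl}W_{ij}$, so $\Delta$ descends to $\widetilde{O}_N^*$; the bistochastic case is identical, since that relation is already preserved by $\Delta$ on $\widetilde{B}_N^+$. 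The main technical obstacle is the odd-length reversal lemma and its combinatorial bookkeeping; once that is in hand, the rest is either routine algebra or a continuity argument.
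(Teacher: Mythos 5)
Your argument is correct and shares the paper's overall skeleton: part (1) is the routine check that $U_{ij}=\sum_k u_{ik}\otimes u_{kj}$ inherits the half-commutation relations, and part (2) is handled by expanding $W=\lim_n P_n$ into basic summands of odd length $2n+1$ in each tensor factor and verifying half-commutation summand by summand, the bistochastic case then following from the orthogonal one. Where you genuinely depart from the paper is in the key combinatorial step. The paper identifies each basic summand of $W_{ij}W_{kl}W_{st}$ with one of $W_{st}W_{kl}W_{ij}$ by appealing to the diagrammatic machinery of \cite{bv2}: the required permutation lies in the category generated by the half-commutation crossing, because each third of a summand contributes an odd number of letters to each tensor leg. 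You instead prove a self-contained odd-length reversal lemma, $g_1\cdots g_{2m+1}=g_{2m+1}\cdots g_1$, and swap the three blocks by reversing the whole word of length $3(2n+1)$ and then un-reversing each block of length $2n+1$. This buys independence from the Tannakian computation of \cite{bv2}, at the price of some explicit word combinatorics; the paper's route is shorter but less elementary. Two minor remarks: in your induction the two-letter prefix is bubbled past $2m+1$ letters, an odd number of steps, so it arrives at the far end reversed, which is exactly what the full reversal requires --- your phrase about ``an even number of bubblings'' leaving the pair ``in its original order'' has the parity backwards, though the lemma and its proof are fine once this is corrected. Also, passing from the $P_n$ to $W$ requires the limit $(PQ)^n\to P\wedge Q$ to be compatible with multiplication (this is strong rather than norm convergence in general); that caveat is inherited from the paper's own construction of the multiplication and is not specific to your argument.
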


\begin{proof}
We use various formulae from sections 3 and 4 above:

(1) This is clear from the inclusion $\widetilde{H}_N^*\subset\widetilde{H}_N^+$, because $\widetilde{H}_N^+$ has semigroup structure given by the $\Delta,\varepsilon$ maps in the statement, and it is well-known, and easy to check, that such maps factorize when dividing by the half-commutation relations $abc=cba$.

(2) We first prove that the multiplication of $\widetilde{O}_N^+$ leaves invariant $\widetilde{O}_N^*$. So, recall from Proposition 4.7 above that with $U_{ij}=\sum_ku_{ik}\otimes u_{kj}$ we have $\Delta(u_{ij})=W_{ij}$, where:
$$W_{ij}=\lim_{n\to\infty}(\underbrace{UU^t\ldots U^tU}_{2n+1\ {\rm terms}})_{ij}$$

We must prove that if the standard coordinates $u_{ij}$ satisfy the half-commutation relations $abc=cba$, then these elements $W_{ij}$ satisfy these relations as well. In order to do so, we use the following alternative formula, which appeared in the proof of Proposition 4.4, and which can be deduced in the free case exactly as in the classical case:
$$W_{ij}=\lim_{n\to\infty}\sum_{a_sb_sc_s}u_{ia_0}u_{b_1a_1}u_{b_1a_2}\ldots u_{b_na_{2n-1}}u_{b_na_{2n}}\otimes u_{a_0c_1}u_{a_1c_1}\ldots u_{a_{2n-2}c_n}u_{a_{2n-1}c_n}u_{a_{2n}j}$$

Observe that on both sides of the tensor product sign we have an odd number of variables. Now let us consider a product of type $W_{ij}W_{kl}W_{st}$. This appears as a triple limit, $\lim_{n,m,p\to\infty}$, of a certain sum of products of basic tensors as the above ones. Our claim is that, by using $abc=cba$ for the coefficients $u_{ij}$, each basic summand in the formula of $W_{ij}W_{kl}W_{st}$ can be identified with a basic summand in the formula of $W_{st}W_{kl}W_{ij}$.

In order to prove this claim, we use the general theory in \cite{bv2}. As explained there, the various identities involving variables $u_{ij}$ that can be obtained by using the half-commutation relations $abc=cba$ are best understood in terms of diagrams, with the relations $abc=cba$ themselves corresponding to the diagram $\slash\hskip-2mm\backslash\hskip-1.7mm|\hskip1mm$. The point now is that, since the number of variables in each 1/3 of basic summand is of the form $x+x$, with $x$ odd, the diagram needed for performing the transformation $W_{ij}W_{kl}W_{st}\to W_{st}W_{kl}W_{ij}$ is indeed in the category generated by $\slash\hskip-2mm\backslash\hskip-1.7mm|\hskip1mm$, computed in \cite{bv2}, and we are done.

Finally, the result for $\widetilde{B}_N^*$ follows from the result for $\widetilde{O}_N^*$.
\end{proof}

Let us discuss now some functoriality issues:

\begin{proposition}
We have inclusions of spaces with multiplications, as follows:
$$\begin{matrix}
\widetilde{B}_N^*&\subset&\widetilde{O}_N^*\\
\\
\cup&&\cup\\
\\
\widetilde{S}_N&\subset&\widetilde{H}_N^*
\end{matrix}$$
The inclusions in Proposition 5.1 above commute as well with the multiplications.
\end{proposition}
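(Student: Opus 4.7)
The plan is to reduce all the claimed compatibilities to the corresponding compatibilities at the $+$ level, which are established in Proposition 4.7 (in the continuous case) and in Section 3 (in the discrete case). The key observation is that, by Proposition 5.2, each of $\widetilde{H}_N^*,\widetilde{B}_N^*,\widetilde{O}_N^*$ inherits its (non-associative, in the last two cases) multiplication from $\widetilde{H}_N^+,\widetilde{B}_N^+,\widetilde{O}_N^+$ via the quotient imposing the half-commutation relations $abc=cba$. Similarly, each classical semigroup $\widetilde{G}_N$ inherits its multiplication from $\widetilde{G}_N^+$ via the commutative quotient, which is precisely the content of Proposition 4.4 vis-\`a-vis Proposition 4.7. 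Therefore, once an inclusion $\widetilde{G}_N^+\subset\widetilde{H}_N^+$ is known to preserve multiplication, the same follows automatically for every inclusion obtained from it by restriction to $\widetilde{O}_N^*$ or to the commutative quotient.

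With this reduction, the horizontal inclusion $\widetilde{S}_N\subset\widetilde{H}_N^*$ is immediate because $\widetilde{H}_N^*$ has multiplication given by the standard coproduct $\Delta(u_{ij})=\sum_k u_{ik}\otimes u_{kj}$ (Proposition 5.2(1)), which is also the multiplication formula for $\widetilde{S}_N$. The horizontal inclusion $\widetilde{B}_N^*\subset\widetilde{O}_N^*$ reduces to the fact that $\widetilde{B}_N^+\subset\widetilde{O}_N^+$ preserves multiplication, which is immediate from Proposition 4.7 because both multiplications are defined by the very same limit formula, and the bistochastic relation $u\xi=uu^t\xi$ is preserved by it (as verified in the proof of Proposition 4.7). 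The two vertical inclusions $\widetilde{S}_N\subset\widetilde{B}_N^*$ and $\widetilde{H}_N^*\subset\widetilde{O}_N^*$ are handled analogously, the latter invoking the observation from the $\widetilde{K}_N^+\subset\widetilde{U}_N^+$ analysis of Proposition 4.7 that whenever $u_{ij}u_{ij}^*$ yields a submagic matrix, the limit defining $\Delta$ collapses to $\Delta(u_{ij})=U_{ij}$, thereby matching the defining coproduct of $\widetilde{H}_N^+$.

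Finally, for the diagrams of Proposition 5.1, namely $\widetilde{G}_N\subset\widetilde{G}_N^*\subset\widetilde{G}_N^+$ and $G_N\subset G_N^*\subset G_N^+$ for $G\in\{H,B,O\}$, together with the vertical inclusions $G_N^\circ\subset\widetilde{G}_N^\circ$, everything reduces to the same principle: the multiplication on $\widetilde{G}_N^+$ restricts, via the appropriate quotient, to the multiplication on each of the subspaces in question, with the limit formula collapsing to $\Delta(u_{ij})=U_{ij}$ whenever the entries satisfy the unitarity conditions defining $G_N^+$ or $\widetilde{H}_N^+$. The only point requiring genuine verification, rather than bookkeeping, is the compatibility of the $\widetilde{H}_N^+\subset\widetilde{O}_N^+$ inclusion with the multiplications; this is the main obstacle, and it is resolved precisely by the collapse argument described above, which is the orthogonal transcription of the $\widetilde{K}_N^+\subset\widetilde{U}_N^+$ computation carried out at the end of Proposition 4.7. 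Everything else is pure functoriality, and the two diagrams commute.
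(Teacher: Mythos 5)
Your proposal is correct and follows essentially the same route as the paper: the paper's own (one-sentence) proof likewise reduces everything to the multiplicative embeddings $\widetilde{S}_N^+\subset\widetilde{B}_N^+$ and $\widetilde{H}_N^+\subset\widetilde{O}_N^+$ from Proposition 4.7, combined with the fact (Proposition 5.2) that the half-liberated objects inherit their multiplications from the $+$ versions. Your write-up merely spells out the collapse of the limit formula to the standard coproduct in the submagic/unitary cases, which the paper leaves implicit.
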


\begin{proof}
All the assertions are clear, by using the fact that the semigroup structures of $\widetilde{S}_N^*=\widetilde{S}_N$ and of $\widetilde{H}_N^*$ come from semigroup embeddings $\widetilde{S}_N^*\subset\widetilde{S}_N^+$ and $\widetilde{H}_N^*\subset\widetilde{H}_N^+$, which can be composed with the multiplicative embeddings $\widetilde{S}_N^+\subset\widetilde{B}_N^+$ and $\widetilde{H}_N^+\subset\widetilde{O}_N^+$.
\end{proof}

In order to get now more insight into the structure of $\widetilde{H}_N^*,\widetilde{B}_N^*,\widetilde{O}_N^*$, and especially into the quite non-standard multiplication of $\widetilde{B}_N^*,\widetilde{O}_N^*$, we use the machinery developed by Bichon and Dubois-Violette in \cite{bdu}. As explained there, the half-liberation operation for arbitrary subgroups $H\subset O_N$ is quite difficult to axiomatize directly, and best here is to introduce an extra object, namely a closed subgroup $K\subset U_N$:
$$\xymatrix{&K\subset U_N\ar@{<->}[dl]\ar@{<->}[dr]&\\H\subset O_N\ar@{<->}[rr]&&H^*\subset O_N^*}$$

Here the upper left correspondence is of compexification/taking real part type, the upper right correspondence can be approached via several algebraic methods (crossed products, $2\times2$ matrix models, Tannaka duality), and the lower correspondence is the half-liberation/abelianization operation, the one that we are interested in.

Following \cite{bdu}, we call a compact semigroup $K\subset\widetilde{U}_N$ self-conjugate if $g=(g_{ij})\in K$ implies $\bar{g}=(\bar{g}_{ij})\in K$. In this case we have an automorphism $s:C(K)\to C(K)$ given by $s(u_{ij})=u_{ij}^*$, and hence we can form a crossed product $C(K)\rtimes\mathbb Z_2$. 

With these notations, we have the following result, adapted from \cite{bdu}:

\begin{proposition}
Assume that $K\subset\widetilde{U}_N$ is self-conjugate, and define $s:C(K)\to C(K)$ by $s(u_{ij})=u_{ij}^*$. Then the following two constructions produce the same algebra:
\begin{enumerate}
\item $A\subset C(K)\rtimes\mathbb Z_2$ is the subalgebra generated by the elements $v_{ij}=u_{ij}s$.

\item $A\subset M_2(C(K))$ is the subalgebra generated by the elements $v_{ij}=\begin{pmatrix}0&u_{ij}\\ \bar{u}_{ij}&0\end{pmatrix}$.
\end{enumerate}
In addition, we have a bialgebra quotient map $C(\widetilde{O}_N^*)\to A$. We write $A=C(H^\times)$, where the compact semigroup $H\subset\widetilde{O}_N$ is the spectrum of the bialgebra $A/<ab=ba>$. 
\end{proposition}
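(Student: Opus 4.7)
The plan is to first identify the two algebras via a standard matrix model of the crossed product, then verify that the generators $v_{ij}$ satisfy the defining relations of $C(\widetilde{O}_N^*)$, and finally show that the resulting surjection intertwines the comultiplications by noting that the non-stationary coproduct from Proposition 4.7 stabilizes after a single step on the image.

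For the identification (1)$=$(2), I would use the standard embedding $C(K)\rtimes\mathbb Z_2\hookrightarrow M_2(C(K))$ sending $f\in C(K)$ to $\mathrm{diag}(f,s(f))$ and sending $s$ to $\left(\begin{smallmatrix}0&1\\1&0\end{smallmatrix}\right)$. A direct computation then gives $u_{ij}s\mapsto\left(\begin{smallmatrix}0 & u_{ij}\\ \bar u_{ij} & 0\end{smallmatrix}\right)$, so the two subalgebras coincide under this isomorphism. From here on I would work inside $C(K)\rtimes\mathbb Z_2$, where the relations $s=s^*$, $s^2=1$ and $sf=s(f)s$ are directly available.

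Next, to check the defining relations of $C(\widetilde{O}_N^*)$ on the generators $v_{ij}=u_{ij}s$: self-adjointness is immediate from $v_{ij}^*=su_{ij}^*=u_{ij}s=v_{ij}$; commutativity of $C(K)$ yields half-commutation, since $v_{ij}v_{kl}v_{mn}=u_{ij}\bar u_{kl}u_{mn}\cdot s=u_{mn}\bar u_{kl}u_{ij}\cdot s=v_{mn}v_{kl}v_{ij}$; and a quick calculation $(VV^t)_{ij}=\sum_k u_{ik}\bar u_{jk}=(uu^*)_{ij}$ shows that $VV^t$ equals the projection $uu^*=p$ coming from $K\subset\widetilde{U}_N$, hence is itself a projection. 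By universality, this produces a surjective $*$-homomorphism $C(\widetilde{O}_N^*)\to A$ sending $u_{ij}\mapsto v_{ij}$.

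Finally, for the bialgebra compatibility, the key point is that the partial isometry relation $uu^*u=u$ in $C(K)$ upgrades in $A$ to the stronger identity $VV^tV=V$, via the computation $(VV^tV)_{ij}=\sum_{kl}u_{ik}\bar u_{lk}u_{lj}\cdot s=(uu^*u)_{ij}\cdot s=v_{ij}$. Consequently the infinite-product formula defining $\Delta$ in Proposition 4.7 collapses to a single step on the image, yielding $\Delta_A(v_{ij})=\sum_k v_{ik}\otimes v_{kj}$, which matches the natural coproduct inherited from the semigroup structure of $K$ together with $\Delta(s)=s\otimes s$ on $\mathbb Z_2$. The main subtlety, I expect, will be precisely this passage from a non-stationary coproduct on $\widetilde O_N^+$ to a stationary one on $A$: once one observes that the $v_{ij}$ satisfy the strict partial isometry identity (and not merely that $VV^t$ is a projection), the verification that the surjection is a bialgebra morphism reduces to a routine calculation.
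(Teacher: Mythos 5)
Your identification of the two algebras via the standard embedding $C(K)\rtimes\mathbb Z_2\subset M_2(C(K))$, and your verification of the defining relations of $C(\widetilde{O}_N^*)$ on the generators $v_{ij}=u_{ij}s$ (self-adjointness, half-commutation from the commutativity of $C(K)$, and the partial isometry condition), coincide with the paper's argument, merely carried out on the crossed product side rather than on the $2\times2$ matrix side. Up to that point the proposal is fine.

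The bialgebra step, however, contains a genuine error. From the identity $vv^tv=v$, which holds for the matrix $v=(v_{ij})$ with entries in $A$, you conclude that the non-stationary coproduct of Proposition 4.7 collapses to $\Delta_A(v_{ij})=\sum_kv_{ik}\otimes v_{kj}$. The collapse would require the identity $VV^tV=V$ for the matrix $V_{ij}=\sum_kv_{ik}\otimes v_{kj}$ with entries in $A\otimes A$, which is a different and strictly stronger statement; as noted in the remark following Proposition 4.4, it holds only under extra hypotheses (unitarity, or submagic modulus), which fail for the main case $K=\widetilde{U}_N$. Concretely, $\sum_ku_{ik}\otimes u_{kj}$ encodes the ordinary matrix product on $\widetilde{U}_N$, which is not the semigroup law $U\circ V=U(U^*U\wedge VV^*)V$: already for $N=2$ there are rank-one partial isometries with $UV\neq U\circ V$, so the coproduct of $C(\widetilde{U}_N)$ is genuinely non-stationary, and hence so is the coproduct $\Delta_A(v_{ij})=\Delta_{C(K)}(u_{ij})(s\otimes s)$ inherited by $A$. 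The correct argument, which the paper defers to the proof of Theorem 5.6, is that $\pi\otimes\pi$ intertwines the two non-stationary limits term by term: a product of $2n+1$ factors of the form $\sum_ku_{ik}s\otimes u_{kj}s$ equals the corresponding alternating product of $u$'s and $\bar u$'s over $C(K)^{\otimes2}$, times $s\otimes s$, so $(\pi\otimes\pi)\lim_n(UU^*\cdots U^*U)_{ij}=\lim_n(U'U'^*\cdots U'^*U')_{ij}(s\otimes s)=\Delta(u_{ij}s)$, with $U'$ the canonical matrix over $C(K)^{\otimes2}$. Your computation $vv^tv=v$ is still needed (it is exactly the paper's verification of the partial isometry relation), but it cannot play the role you assign to it.
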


\begin{proof}
The first observation is that the matrices in (2) satisfy the half-commutation relations $abc=cba$, because they multiply according to the following rule:
$$\begin{pmatrix}0&x\\ \bar{x}&0\end{pmatrix}\begin{pmatrix}0&y\\ \bar{y}&0\end{pmatrix}\begin{pmatrix}0&z\\ \bar{z}&0\end{pmatrix}=\begin{pmatrix}x\bar{y}&0\\ 0&\bar{x}y\end{pmatrix}\begin{pmatrix}0&z\\ \bar{z}&0\end{pmatrix}=\begin{pmatrix}0&x\bar{y}z\\ \bar{x}y\bar{z}&0\end{pmatrix}$$

These matrices are as well self-adjoint, $v_{ij}=v_{ij}^*$. Let us verify now the fact that the global matrix $v=(v_{ij})$ satisfies the partial isometry condition. We have: 
\begin{eqnarray*}
(vv^tv)_{ij}
&=&\sum_{kl}v_{ik}v_{lk}v_{lj}=\sum_{kl}\begin{pmatrix}0&u_{ik}\\ \bar{u}_{ik}&0\end{pmatrix}\begin{pmatrix}0&u_{lk}\\ \bar{u}_{lk}&0\end{pmatrix}\begin{pmatrix}0&u_{lj}\\ \bar{u}_{lj}&0\end{pmatrix}\\
&=&\begin{pmatrix}0&\sum_{kl}u_{ik}\bar{u}_{lk}u_{lj}\\ \sum_{kl}\bar{u}_{ik}u_{lk}\bar{u}_{lj}&0\end{pmatrix}
=\begin{pmatrix}0&u_{ij}\\ \bar{u}_{ij}&0\end{pmatrix}=v_{ij}
\end{eqnarray*}

We conclude that, with $A$ being as in (2), we have a quotient map $C(\widetilde{O}_N^*)\to A$.

It remains to prove that, when $K\subset\widetilde{U}_N$ is a semigroup, assumed to be self-conjugate, this algebra is a bialgebra, and coincides with the algebra constructed in (1).

We recall that $C(K)\rtimes\mathbb Z_2$ is by definition the coalgebra $C(K)\otimes C(\mathbb Z_2)$, with product $(fs^i)(gs^j)=(fs^i(g))s^{i+j}$ and involution $(fs^i)^*=(s^i(f)^*)s^i$, where we use the notation $fs^i=f\otimes s^i$. Our claim is that we have an embedding, as follows:
$$C(K)\rtimes\mathbb Z_2\subset M_2(C(K))\qquad:\qquad u_{ij}s\to\begin{pmatrix}0&u_{ij}\\ \bar{u}_{ij}&0\end{pmatrix}$$

Indeed, such an embedding can be constructed as follows:
$$f\to\begin{pmatrix}f&0\\ 0&s(f)\end{pmatrix},\qquad
fs\to\begin{pmatrix}0&f\\ s(f)&0\end{pmatrix}$$

Now with the above claim in hand, we conclude that $A$ coincides with the algebra constructed in (1). As for the bialgebra assertion, this is clear from the picture coming from (1), because $A$ appears as subalgebra of the crossed product. See \cite{bdu}.
\end{proof}

We can use the above method for investigating $\widetilde{H}_N^*,\widetilde{O}_N^*$, as follows:

\begin{proposition}
We have maps as follows
$$\begin{matrix}
C(\widetilde{O}_N^*)&\to&C(\widetilde{U}_N)\rtimes\mathbb Z_2&\subset&M_2(C(\widetilde{U}_N))\\
\\
\cup&&\cup&&\cup\\
\\
C(\widetilde{H}_N^*)&\to&C(\widetilde{K}_N)\rtimes\mathbb Z_2&\subset&M_2(C(\widetilde{K}_N))
\end{matrix}$$
given by the formulae in Proposition 5.4 above.
\end{proposition}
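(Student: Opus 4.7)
The plan is to obtain the top row by applying Proposition 5.4 to $K=\widetilde{U}_N$, to obtain the bottom row by applying it to $K=\widetilde{K}_N$ and then showing that the resulting map factors through the quotient $C(\widetilde{O}_N^*)\to C(\widetilde{H}_N^*)$, and finally to derive the vertical compatibility from functoriality of the construction $K\mapsto C(K)\rtimes\mathbb{Z}_2\subset M_2(C(K))$.

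I first need to verify the self-conjugacy hypothesis of Proposition 5.4 in each case. For $\widetilde{U}_N$ this is immediate from the matrix picture in Proposition 4.2, since $UU^*U=U$ passes to conjugates. For $\widetilde{K}_N$, a signed partial permutation has nonzero entries in $\mathbb{T}$, a set stable under conjugation, so $\bar T\in\widetilde{K}_N$ whenever $T\in\widetilde{K}_N$. Proposition 5.4 then directly produces the top row of the diagram, with $u_{ij}$ sent to
\[
V_{ij}=\begin{pmatrix}0&u_{ij}\\ \bar{u}_{ij}&0\end{pmatrix}
\]
built from the standard coordinates of $\widetilde{U}_N$; applied to $K=\widetilde{K}_N$ it produces an a priori map $C(\widetilde{O}_N^*)\to C(\widetilde{K}_N)\rtimes\mathbb{Z}_2\subset M_2(C(\widetilde{K}_N))$ given by the same formula, but now with $u_{ij}$ standing for the coordinates of $\widetilde{K}_N$.

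The main technical step is to check that this second map factors through $C(\widetilde{H}_N^*)$, i.e.\ that the $V_{ij}\in M_2(C(\widetilde{K}_N))$ satisfy the extra defining relations of $\widetilde{H}_N^*$, namely the $x=2$ specialization of Proposition 3.4 together with the half-commutation. Half-commutation of the $V_{ij}$ is built into the $2\times 2$ antidiagonal form, exactly as in the proof of Proposition 5.4, and each $V_{ij}$ is self-adjoint by inspection. A direct block-diagonal computation gives
\[
V_{ij}^2=V_{ij}V_{ij}^*=V_{ij}^*V_{ij}=\begin{pmatrix}p_{ij}&0\\ 0&p_{ij}\end{pmatrix}=:P_{ij},
\]
where $p_{ij}=u_{ij}u_{ij}^*=u_{ij}^*u_{ij}$ are the submagic projections in $C(\widetilde{K}_N)$ coming from Proposition 3.4. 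The $P_{ij}$ are projections, the matrix $(P_{ij})$ inherits submagicness from $(p_{ij})$ via its diagonal form, and the identity $V_{ij}^2=P_{ij}$ is exactly the $x=2$ relation. I expect this verification to be the main obstacle, as it is the one step where the extra structure specific to $\widetilde{K}_N\subset\widetilde{U}_N$ is actually used.

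Finally, the commutativity of the full diagram follows from naturality. The semigroup inclusion $\widetilde{K}_N\subset\widetilde{U}_N$ yields a surjection $C(\widetilde{U}_N)\twoheadrightarrow C(\widetilde{K}_N)$ that intertwines the conjugation automorphisms $s$, hence extends canonically to surjections $C(\widetilde{U}_N)\rtimes\mathbb{Z}_2\twoheadrightarrow C(\widetilde{K}_N)\rtimes\mathbb{Z}_2$ and $M_2(C(\widetilde{U}_N))\twoheadrightarrow M_2(C(\widetilde{K}_N))$. These surjections send the $V_{ij}$ formed over $\widetilde{U}_N$ to the $V_{ij}$ formed over $\widetilde{K}_N$, which is the vertical compatibility asserted by the diagram.
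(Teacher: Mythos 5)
Your proposal is correct and follows essentially the same route as the paper: verify self-conjugacy of $\widetilde{U}_N$ and $\widetilde{K}_N$, invoke Proposition 5.4 for the top row, and obtain the bottom-left map by checking that the antidiagonal matrices $V_{ij}$ over $C(\widetilde{K}_N)$ satisfy the extra relations $V_{ij}^2=P_{ij}$ with $(P_{ij})$ submagic, which is precisely the paper's computation $v_{ij}^2=\mathrm{diag}(|u_{ij}|^2,|u_{ij}|^2)$. Your explicit treatment of the vertical compatibility via functoriality of $K\mapsto C(K)\rtimes\mathbb Z_2$ is a small addition the paper leaves implicit, and your reading ``submagic'' is the correct one where the paper's proof writes ``magic''.
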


\begin{proof}
Since we have $\overline{(UU^tU)}_{ij}=(\bar{U}U^t\bar{U})_{ij}$, the relation $UU^tU=U$ implies $\bar{U}U^*\bar{U}=\bar{U}$, and so $\widetilde{U}_N$ is self-conjugate. The fact that $\widetilde{K}_N$ is self-conjugate too is clear as well. We can therefore apply the constructions in Proposition 5.4 above, and we obtain maps as in the statement, with only the map at bottom left still needing to be checked.

Here we just need to check that the $2\times 2$ matrices in Proposition 5.4 (2) satisfy the extra relations $v_{ij}^2=p_{ij}$ (projection) and $p=(p_{ij})$ magic, defining $\widetilde{H}_N^*$. We have:
$$v_{ij}^2=\begin{pmatrix}0&u_{ij}\\\bar{u}_{ij}&0\end{pmatrix}^2=
\begin{pmatrix}|u_{ij}|^2&0\\ 0&|u_{ij}|^2\end{pmatrix}$$

Now by remembering that the standard coordinates $u_{ij}$ on the semigroup $\widetilde{K}_N$ satisfy $u_{ij}u_{ij}^*=p_{ij}$ (projection) and $p=(p_{ij})$ magic, this gives the result.
\end{proof}

As explained in \cite{bdu}, in the compact group case, the maps $C(O_N^*)\to C(U_N)\rtimes\mathbb Z_2$ and $C(H_N^*)\to C(K_N)\rtimes\mathbb Z_2$ are embeddings. This fact however is quite non-trivial, and its proof uses a number of ingredients, for instance several key facts regarding the projective version operation, $G\to PG$, which are not available in the semigroup case.

In order to overcome these issues, we can proceed as follows:

\begin{theorem}
Let $\widetilde{O}_N^\times\subset\widetilde{O}_N^*$ and $\widetilde{H}_N^\times\subset\widetilde{H}_N^*$ be the subspaces corresponding to the images of the maps $C(\widetilde{O}_N^*)\to C(\widetilde{U}_N)\rtimes\mathbb Z_2$ and $C(\widetilde{H}_N^*)\to C(\widetilde{K}_N)\rtimes\mathbb Z_2$ constructed above.
\begin{enumerate}
\item $\widetilde{O}_N^\times,\widetilde{H}_N^\times$ are semigroups, with multiplication coming from the crossed products.

\item $\widetilde{O}_N\subset\widetilde{O}_N^\times\subset\widetilde{O}_N^*$ and $\widetilde{H}_N\subset\widetilde{H}_N^\times\subset\widetilde{H}_N^*$ commute with the multiplications.
\end{enumerate}
\end{theorem}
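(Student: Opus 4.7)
My plan is to exhibit the semigroup structures on $\widetilde{O}_N^\times$ and $\widetilde{H}_N^\times$ as restrictions of a natural bialgebra structure on the ambient crossed product, and then verify that the two inclusions in each chain are bialgebra maps.

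For step (1), I would first equip $C(\widetilde{U}_N) \rtimes \mathbb{Z}_2$ with the comultiplication $\Delta(fs^i) = \Delta_{\widetilde{U}_N}(f) \cdot (s^i \otimes s^i)$. Multiplicativity is a direct check using that complex conjugation is a semigroup automorphism of $\widetilde{U}_N$ (since $\overline{U \circ V} = \bar U \circ \bar V$), so that $s$ is a bialgebra automorphism of $C(\widetilde{U}_N)$; coassociativity descends from the classical semigroup $\widetilde{U}_N$. Next I would establish a parity decomposition $A = A_0 \oplus A_1 s$, where $A_0, A_1 \subset C(\widetilde{U}_N)$ are the subspaces consisting of alternating words $u_{i_1 j_1} \bar u_{i_2 j_2} \cdots$ of even and odd length, respectively; this reflects the computation $v_{i_1 j_1} v_{i_2 j_2} = u_{i_1 j_1} \bar u_{i_2 j_2}$ and its higher analogues. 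Since $\Delta(v_{ij}) = \Delta_{\widetilde{U}_N}(u_{ij}) \cdot (s \otimes s)$, showing that $A$ is a sub-bialgebra reduces to verifying $\Delta_{\widetilde{U}_N}(u_{ij}) \in A_1 \otimes A_1$. I would inspect the limit formula of Proposition 4.7: the generic summand has $u_{i a_0} q_{a_1 a_2} \cdots q_{a_{2n-1} a_{2n}}$ on the left and $p_{a_0 a_1} \cdots p_{a_{2n-2} a_{2n-1}} u_{a_{2n} j}$ on the right, and expanding $p = uu^*$, $q = u^*u$ turns each tensor factor into a sum of alternating odd-length words, living precisely in $A_1$.

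For step (2), the inclusion $\widetilde{O}_N^\times \subset \widetilde{O}_N^*$ amounts to showing that the quotient map $\pi: C(\widetilde{O}_N^*) \to A$ intertwines the comultiplications. Writing $V_{ij} = \sum_k v_{ik} \otimes v_{kj} = U_{ij}(s \otimes s)$ in $A \otimes A$, the crossed product commutation $s u_{ij} = \bar u_{ij} s$ gives $(s \otimes s) U_{lk} = \bar U_{lk}(s \otimes s)$, hence $V_{ik} V_{lk} = U_{ik} \bar U_{lk}$, and therefore $(VV^tV)_{ij} = (UU^*U)_{ij}(s \otimes s)$. Iterating, $\lim (VV^t \cdots V^tV)_{ij} = \Delta_{\widetilde{U}_N}(u_{ij})(s \otimes s) = \Delta_A(v_{ij})$, which is exactly the image of the limit formula of Proposition 4.7 under $\pi \otimes \pi$. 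For the inclusion $\widetilde{O}_N \subset \widetilde{O}_N^\times$, I would identify $A^{ab}$ with $C(\widetilde{O}_N)$: a character $\chi: A \to \mathbb{C}$ is determined by real numbers $x_{ij} = \chi(v_{ij})$ that satisfy the partial isometry relation $\sum_{k,l} x_{ik} x_{lk} x_{lj} = x_{ij}$, and the identity $\chi(v_{ij} v_{kl}) = \chi(u_{ij} \bar u_{kl})$ together with the reality of the $x_{ij}$ forces $(x_{ij})$ to arise from a real partial isometry. Compatibility with $\Delta$ then follows by abelianizing the limit formula for $\Delta_A$, recovering the classical limit formula for $\Delta_{\widetilde{O}_N}$.

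The hyperoctahedral case proceeds identically with $C(\widetilde{K}_N) \rtimes \mathbb{Z}_2$ in place of $C(\widetilde{U}_N) \rtimes \mathbb{Z}_2$, and the additional submagic relations on the $r_{ij} = u_{ij} u_{ij}^*$ survive the parity decomposition unchanged, appearing as $v_{ij}^2 = r_{ij}$ in $A_0$. The main obstacle in this plan is the parity argument for step (1), namely verifying that the non-trivial limit defining $\Delta_{\widetilde{U}_N}(u_{ij})$ lands in the graded piece $A_1 \otimes A_1$; this is the step that ultimately allows the quasi-multiplication of $\widetilde{O}_N^*$, which is only a limit on $\widetilde{O}_N^+$, to become a genuine semigroup structure once restricted to $\widetilde{O}_N^\times$.
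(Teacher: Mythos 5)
Your proposal is correct, and its core coincides with the paper's proof: the decisive step in both is the identity $V_{ij}=U_{ij}(s\otimes s)$ in the crossed product, which turns each truncation $(VV^*\cdots V^*V)_{ij}$ into $(UU^*\cdots U^*U)_{ij}(s\otimes s)$ and hence carries the limit defining the multiplication of $\widetilde{O}_N^*$ onto the comultiplication of $C(\widetilde{U}_N)\rtimes\mathbb Z_2$. Where you go beyond the paper is in part (1): the paper simply invokes Proposition 5.4 and \cite{bdu} for the bialgebra property of the image $A$, whereas your parity decomposition $A=A_0\oplus A_1s$, together with the observation that each summand of the truncated limit formula is an alternating odd-length word on either side of the tensor sign, is precisely the verification that $\Delta(v_{ij})\in A\otimes A$ which the paper leaves implicit; this is a worthwhile addition, and you correctly identify it as the crux. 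The one place to tighten is the inclusion $\widetilde{O}_N\subset\widetilde{O}_N^\times$: your character computation shows that every character of $A$ yields a real partial isometry, i.e.\ that the spectrum of $A/\langle ab=ba\rangle$ embeds into $\widetilde{O}_N$, but the inclusion asserted in the theorem requires the opposite containment, namely that every $X\in\widetilde{O}_N$ induces a character, equivalently a surjection $C(\widetilde{O}_N^\times)\to C(\widetilde{O}_N)$. The paper obtains this directly as the composition $C(\widetilde{O}_N^\times)\subset C(\widetilde{U}_N)\rtimes\mathbb Z_2\to C(\widetilde{U}_N)\to C(\widetilde{O}_N)$, the middle arrow being $id\otimes\varepsilon$, which sends $v_{ij}=u_{ij}s\mapsto u_{ij}\mapsto v_{ij}$; in your language this is evaluation at a real partial isometry, on which $s$ acts trivially, so the repair is one line. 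The hyperoctahedral case by restriction matches the paper.
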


\begin{proof}
We use the various formulae in Proposition 5.4 and Proposition 5.5 above.

(1) This assertion is clear from Proposition 5.4 above.

(2) We first discuss the orthogonal case. Observe that we have an inclusion $\widetilde{O}_N\subset\widetilde{O}_N^\times$ as in the statement. This inclusion can be constructed indeed as follows:
$$C(\widetilde{O}_N^\times)\subset C(\widetilde{U}_N)\rtimes\mathbb Z_2\to C(\widetilde{U}_N)\to C(\widetilde{O}_N)$$

Here the middle map is $id\otimes\varepsilon$, and at right we have the standard quotient map, coming from the embedding $\widetilde{O}_N\subset\widetilde{U}_N$. Indeed, the resulting composition is as follows:
$$v_{ij}\to u_{ij}s\to u_{ij}\to v_{ij}$$

In remains to prove that the inclusion $\widetilde{O}_N^\times\subset \widetilde{O}_N^*$ commute with the multiplications. We must check here the commutativity of the following diagram:
$$\xymatrix@C=20mm{C(\widetilde{O}_N^*)\ar[d]_\Delta\ar[r]^\pi&C(\widetilde{U}_N)\rtimes\mathbb Z_2\ar[d]^\Delta\\ C(\widetilde{O}_N^*)\otimes C(\widetilde{O}_N^*)\ar[r]_{\pi\otimes\pi}&(C(\widetilde{U}_N)\rtimes\mathbb Z_2)^{\otimes2}}$$

We verify this on the standard generators $v_{ij}$. First, with $U_{ij}=\sum_ku_{ik}\otimes u_{kj}$ and $V_{ij}=\sum_kv_{ik}\otimes v_{kj}$, we have the following computation:
$$(\pi\otimes\pi)V_{ij}=\sum_k\pi(v_{ik})\otimes\pi(v_{kj})=\sum_ku_{ik}s\otimes u_{kj}s=U_{ij}(s\otimes s)$$

Thus we have $(id\otimes\pi\otimes\pi)V=U(s\otimes s)$, and we deduce that we have:
\begin{eqnarray*}
(\pi\otimes\pi)\Delta(v_{ij})
&=&(\pi\otimes\pi)\lim_{n\to\infty}(\underbrace{VV^*\ldots V^*V}_{2n+1\ {\rm terms}})_{ij}\\
&=&\lim_{n\to\infty}(\underbrace{UU^*\ldots U^*U}_{2n+1\ {\rm terms}})_{ij}(s\otimes s)\\
&=&\Delta(u_{ij})\Delta(s)=\Delta(u_{ij}s)=\Delta(\pi(v_{ij}))
\end{eqnarray*}

Thus the above diagram commutes, and we are done with the orthogonal case. The assertion in the hyperoctahedral case follows by restriction.
\end{proof}

There are many questions in connection with the above results. Perhaps the most important one concerns the semigroup analogue of the key isomorphisms $PO_N^*=PU_N$ and $PH_N^*=PK_N$ from \cite{bv2}. It is quite unclear whether something can be said here.

\section{Unitary half-liberation}

In this section we discuss the half-liberation problem for the remaining semigroups, $\widetilde{K}_N,\widetilde{U}_N$. As already mentioned in section 5, in the unitary case the half-liberation is not unique. One proposal, involving the relations $ab^*c=cb^*a$, was made by  Bhowmick, D'Andrea and Dabrowski in \cite{bdd}. Another proposal, involving the relations $abc=cba$, was made by Bichon and Dubois-Violette in \cite{bdu}. More precisely, we have:

\begin{definition}
The compact quantum groups $U_N^{**}\subset U_N^*$, which appear as intermediate subgroups $U_N\subset U_N^{**}\subset U_N^*\subset U_N^+$, are constructed as follows:
\begin{enumerate}
\item $U_N^*$, via the relations $ab^*c=cb^*a$, for any $a,b,c\in\{u_{ij}\}$.

\item $U_N^*$, via the relations $abc=cba$, for any $a,b,c\in\{u_{ij},u_{ij}^*\}$.
\end{enumerate}
\end{definition}

We refer to \cite{bdd} for the general theory and for the noncommutative geometric meaning of the quantum group $U_N^*$, and to \cite{bdu} for the general theory of its subgroup $U_N^{**}$.

As a first observation, we can now half-liberate any group $G\subset U_N^+$ already possessing a liberation $G^+\subset U_N^+$, simply by setting $G^*=G^+\cap U_N^*,G^{**}=G^+\cap U_N^{**}$.

In order to discuss now semigroup extensions, consider first the group $H_N^x=\mathbb Z_s\wr S_N$ from section 3 above, with $x\in\{1,\ldots,\infty\}$, consider its liberation $H_N^{x+}$ constructed in \cite{bb+}, \cite{bv1}, and explained in section 3, and construct as above $H_N^{x*},H_N^{x**}$. We have then:

\begin{proposition}
Let $\widetilde{H}_N^{x**}\subset\widetilde{H}_N^{x*}\subset\widetilde{H}_N^{x+}$ by obtained by using the relations $ab^*c=cb^*a$ with $a,b,c\in\{u_{ij}\}$, and $abc=cba$ with $a,b,c\in\{u_{ij},u_{ij}^*\}$.
\begin{enumerate}
\item $\widetilde{H}_N^{x**},\widetilde{H}_N^{x*}$ are semigroups, with $\Delta(u_{ij})=\sum_ku_{ik}\otimes u_{kj}$ and $\varepsilon(u_{ij})=\delta_{ij}$.

\item We have an embedding $H_N^{x*}\subset\widetilde{H}_N^{x**}$, obtained by assuming that $p$ is magic.

\item At $x=1,2$ we obtain respectively $\widetilde{S}_N$ twice, and $\widetilde{H}_N^*$ twice.
\end{enumerate}
\end{proposition}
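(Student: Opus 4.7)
The strategy splits according to the three claims. For (1), the bialgebra structure $(\Delta,\varepsilon)$ already exists on $C(\widetilde{H}_N^{x+})$ by Proposition 3.4, so it suffices to verify that the additional defining relations of $\widetilde{H}_N^{x*}$ and $\widetilde{H}_N^{x**}$ are preserved when passing to the respective quotients. For $\varepsilon$ this is immediate since the image $\mathbb{C}$ is commutative. For $\Delta$, setting $U_{ij}=\sum_k u_{ik}\otimes u_{kj}$, one expands a triple product such as $U_{ij}U_{kl}^*U_{mn}$ into a triple sum of basic tensors of the form $u_{ip}u_{kq}^*u_{mr}\otimes u_{pj}u_{ql}^*u_{rn}$ and applies the BDD half-commutation $ab^*c=cb^*a$ to each tensor factor, rearranging the sum into $U_{mn}U_{kl}^*U_{ij}$. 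The BDV case runs analogously, with the same manipulation performed over all choices of sign $(a,b,c)\in\{u_{ij},u_{ij}^*\}^3$.

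For (2), imposing that $p=(u_{ij}u_{ij}^*)$ be magic inside $\widetilde{H}_N^{x**}$ promotes the submagic condition to a magic one, and therefore collapses $C(\widetilde{H}_N^{x+})$ onto $C(H_N^{x+})$ exactly as in Proposition 3.4 (2). The extra half-commutation relations survive this quotient unchanged, producing the corresponding half-liberation of $H_N^{x+}$; dualizing then gives the stated embedding.

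For (3), at $x=1$ the relation $u_{ij}^x=p_{ij}$ reads $u_{ij}=p_{ij}$, so each coordinate is a self-adjoint projection, and the BDD relation $ab^*c=cb^*a$ simplifies to $abc=cba$; specializing $a=b$ and using $a^2=a$ yields $ac=ca$, exactly the argument already used in Proposition 5.1. Thus both $\widetilde{H}_N^{1*}$ and $\widetilde{H}_N^{1**}$ reduce to $\widetilde{S}_N$. At $x=2$ the main point is to prove $u_{ij}=u_{ij}^*$: from $u_{ij}^2=p_{ij}$, taking adjoints gives $(u_{ij}^*)^2=p_{ij}$, hence $u_{ij}^2=(u_{ij}^*)^2$; multiplying on the right by $u_{ij}$ and applying the partial isometry identities on each side yields
$$u_{ij}\;=\;u_{ij}^3\;=\;(u_{ij}^*)^2 u_{ij}\;=\;u_{ij}^*\,p_{ij}\;=\;u_{ij}^*.$$
With all coordinates self-adjoint, both the BDD and BDV relations degenerate to $abc=cba$ on $\{u_{ij}\}$, which by Proposition 5.1 is exactly the defining relation of $\widetilde{H}_N^*$.

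The main obstacle I expect is the self-adjointness derivation at $x=2$ in part (3): this is the one place where a nontrivial interaction between the partial isometry relations, the submagic condition, and the torsion relation $u_{ij}^x=p_{ij}$ must be exploited, and one has to pick the right order of multiplications for the cancellations to work out. Everything else reduces to direct algebraic bookkeeping of the kind already carried out in Sections 3--5.
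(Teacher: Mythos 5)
Your proof is correct and follows essentially the same route as the paper's, which is a very terse three-part verification deferring to Propositions 3.4 and 5.1: part (1) by checking the half-commutation relations on $U_{ij}=\sum_k u_{ik}\otimes u_{kj}$, part (2) by the magic-quotient argument of Proposition 3.4 (2), and part (3) by observing that the coordinates are real at $x=1,2$ so both sets of relations degenerate. The one place you supply genuine detail the paper omits is the self-adjointness at $x=2$, where the paper simply asserts that ``the coordinates are real''; your chain $u_{ij}=u_{ij}^3=(u_{ij}^*)^2u_{ij}=u_{ij}^*p_{ij}=u_{ij}^*$ is a correct justification of that assertion, using $u_{ij}^2=(u_{ij}^*)^2=p_{ij}$ and the partial isometry identities.
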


\begin{proof}
These are elementary statements, whose proof goes as follows:

(1) The existence of $\varepsilon,\Delta$ are both clear, because if the standard coordinates $u_{ij}$ satisfy $abc=cba$, then the elements $U_{ij}=\sum_ku_{ik}\otimes u_{kj}$ satisfy these relations too.

(2) Once again this is an elementary fact, which follows by using the same arguments as in the free case, from the proof of Proposition 3.4 (2) above.

(3) This is clear from definitions, because at $x=1,2$ the coordinates are real, and so the two constructions in the statement produce the usual (real) half-liberation.
\end{proof}

In the unitary case now, we have the following analogue of Definition 6.1:

\begin{proposition}
Let $\widetilde{U}_N^{**}\subset\widetilde{U}_N^*\subset\widetilde{U}_N^+$ by obtained by using the relations $ab^*c=cb^*a$ with $a,b,c\in\{u_{ij}\}$, and $abc=cba$ with $a,b,c\in\{u_{ij},u_{ij}^*\}$.
\begin{enumerate}
\item $\widetilde{U}_N^{**},\widetilde{U}_N^*$ are stable under the multiplication of $\widetilde{U}_N^+$.

\item We have an embedding $U_N\subset\widetilde{U}_N^{**}$, obtained by assuming that $u$ is unitary.

\item We have embeddings $\widetilde{K}_N^*\subset\widetilde{U}_N^*$ and $\widetilde{O}_N^*,\widetilde{K}_N^{**}\subset\widetilde{U}_N^{**}$. 
\end{enumerate}
\end{proposition}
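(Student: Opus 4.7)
The plan is to follow the strategy of Proposition 5.2, using the multiplication formula from Proposition 4.7,
$$\Delta(u_{ij})=W_{ij}=\lim_{n\to\infty}(\underbrace{UU^*\ldots U^*U}_{2n+1\ {\rm terms}})_{ij}$$
with $U_{ij}=\sum_k u_{ik}\otimes u_{kj}$, together with the expanded basic-summand form from the proof of Proposition 4.4, in which each tensor factor of each basic summand is an odd-length monomial built from the original coordinates, with $u_{ab}$'s and $u_{ab}^*$'s appearing in a controlled alternating pattern. The content of part (1) is then a bookkeeping problem controllable by the diagrammatic calculi of \cite{bv2}, \cite{bdd}, \cite{bdu}: one must argue that the half-commutation relations defining $\widetilde{U}_N^*$ and $\widetilde{U}_N^{**}$ are preserved when one passes from $(u_{ij})$ to $(W_{ij})$.

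For the $\widetilde{U}_N^{**}$ case, a product $W_{ij}W_{kl}^{\pm}W_{mn}$ expands, in the relevant triple limit, as a sum of basic triple tensors each of whose factors is a product of an odd number of generators in $\{u_{ab},u_{ab}^*\}$. Exactly as in the proof of Proposition 5.2(2), each such summand can be reshuffled into the corresponding summand of $W_{mn}W_{kl}^{\pm}W_{ij}$ via a sequence of moves $abc=cba$: the relevant crossing diagram $\slash\hskip-2mm\backslash\hskip-1.7mm|\hskip1mm$, now equipped with the two-color convention of \cite{bdu}, lies in the partition category generating $U_N^{**}$. For $\widetilde{U}_N^*$ the argument is parallel, with the Bhowmick--D'Andrea--Dabrowski coloring from \cite{bdd}: the relation $ab^*c=cb^*a$ corresponds to the same crossing with the middle leg of color opposite to the outer two, which lies in the category generating $U_N^*$. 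The odd-length signature of each tensor factor of each basic summand is what guarantees that the moves can be executed without leftover parity, so the reshuffling is legitimate and $W$ inherits the half-commutation relations.

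Part (2) is then a composition: imposing on $C(\widetilde{U}_N^{**})$ the relations $uu^*=u^*u=1$ (i.e.\ $p=p'=q=q'=1$ in the sense of Definition 4.5) yields $C(U_N^{**})$ as in Proposition 4.6, and further passing to the abelianization produces the embedding $U_N\subset U_N^{**}\subset\widetilde{U}_N^{**}$. For part (3), observe that $\widetilde{O}_N^*\subset\widetilde{U}_N^{**}$ because imposing $u_{ij}=u_{ij}^*$ collapses both the $*$ and $**$ families of relations to the single family $abc=cba$ on $\{u_{ij}\}$, which is exactly the defining family of $\widetilde{O}_N^*$; and $\widetilde{K}_N^*\subset\widetilde{U}_N^*$, $\widetilde{K}_N^{**}\subset\widetilde{U}_N^{**}$ hold by construction, since $\widetilde{K}_N^*$ and $\widetilde{K}_N^{**}$ are defined by further imposing the submagic condition on $(u_{ij}u_{ij}^*)$, as in Proposition 3.4. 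The main obstacle in all of this is the $\widetilde{U}_N^*$ case of part (1): the single, rigidly-colored relation $ab^*c=cb^*a$ offers less flexibility than the $**$ relations, and one must verify carefully that the alternating $u,u^*$ structure inside each tensor factor of $W_{ij}$ is consistent with the moves permitted by the $U_N^*$ partition category of \cite{bdd}; once the coloring bookkeeping is in hand, the reduction to that categorical framework is mechanical.
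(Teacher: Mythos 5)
Your proposal is correct and follows essentially the same route as the paper, whose own proof of this proposition consists only of cross-references: part (1) to the argument of Proposition 5.2 (2), part (2) to the discussion in Proposition 4.6, and part (3) to the definitions. Your expansion of part (1), in particular the observation that the odd length of each tensor factor in the basic summands makes the block reversal a parity-preserving (hence color-preserving) permutation realizable by the relevant colored crossing, is exactly the bookkeeping the paper leaves implicit.
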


\begin{proof}
We use various formulae and methods from sections 4 and 5 above:

(1) The proof here is similar to the one of Proposition 5.2 (2).

(2) This is clear from the discussion in the proof of Proposition 4.6 above.

(3) Once again, this is clear from the definitions of the various objets involved.
\end{proof}

We can apply now the methods in \cite{bdu}, in order to construct smaller half-liberations $K_N^\times\subset K_N^{**}$ and $U_N^\times\subset U_N^{**}$, which are semigroups. We use the following semigroup analogues of the key isomorphisms $U_N\simeq O_{2,N}$ and $K_N\simeq H_{2,N}$ from \cite{bdu}:

\begin{proposition}
The following hold:
\begin{enumerate}
\item For $A,B\in M_N(\mathbb R)$ we have $A+iB\in\widetilde{U}_N\iff\begin{pmatrix}A&B\\-B&A\end{pmatrix}\in\widetilde{O}_{2N}$.

\item Both $\widetilde{U}_{2,N}=\{U\in\widetilde{U}_{2N}|U=(^A_{-B}{\ }^B_A)\}$ and $\widetilde{O}_{2,N}=\widetilde{U}_{2,N}\cap\widetilde{O}_{2N}$ are semigroups. 

\item Also, both $\widetilde{K}_{2,N}=\widetilde{U}_{2,N}\cap\widetilde{K}_{2N}$ and $\widetilde{H}_{2,N}=\widetilde{U}_{2,N}\cap\widetilde{H}_{2N}$ are semigroups. 

\item We have isomorphisms $\widetilde{U}_N\simeq\widetilde{O}_{2,N}$ and $\widetilde{K}_N\simeq\widetilde{H}_{2,N}$, coming from (1).
\end{enumerate}
\end{proposition}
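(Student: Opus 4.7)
The plan is to exploit the standard $\mathbb{R}$-algebra embedding
$$\phi:M_N(\mathbb{C})\hookrightarrow M_{2N}(\mathbb{R}),\qquad \phi(A+iB)=\begin{pmatrix}A&B\\-B&A\end{pmatrix},$$
which is injective, preserves products, and intertwines the complex adjoint with the real transpose: $\phi((A+iB)^*)=\phi(A+iB)^t$. For (1), by Proposition 4.2 the partial isometry condition reads $UU^*U=U$ on the complex side and $MM^tM=M$ on the real side; since $\phi$ is multiplicative and transports $*\mapsto t$, these conditions match up, which gives the stated equivalence.

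For (2), the image $\phi(M_N(\mathbb{C}))$ is a norm-closed $\mathbb{R}$-subalgebra of $M_{2N}(\mathbb{R})$ stable under transpose. By Proposition 4.2(2) the semigroup law in $\widetilde{O}_{2N}$ is $U\circ V=U(U^tU\wedge VV^t)V$, and the only delicate ingredient is the meet $P\wedge Q=\lim_{n\to\infty}(PQ)^n$ of two projections $P=VV^t$, $Q=U^tU$ inside $\phi(M_N(\mathbb{C}))$: since the subalgebra is closed under products and norm limits, $P\wedge Q$ remains inside it, and therefore so does $U(U^tU\wedge VV^t)V$. Combined with (1), this shows $\widetilde{U}_{2,N}$ is closed under $\circ$. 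The remaining semigroups in (2) and (3) are then intersections of $\widetilde{U}_{2,N}$ with sub-semigroups of $\widetilde{U}_{2N}$, namely $\widetilde{O}_{2N}$ and $\widetilde{K}_{2N},\widetilde{H}_{2N}$, which are themselves multiplicatively stable inside $\widetilde{U}_{2N}$ by Propositions 4.3 and 3.2; hence all four intersections are semigroups.

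For (4), the bijection $\widetilde{U}_N\to\widetilde{O}_{2,N}$ is the restriction of $\phi$: surjectivity follows because every element of $\widetilde{O}_{2,N}$ has by definition the block form $\phi(A+iB)$, and injectivity is clear. Semigroup compatibility is automatic because $\phi$ preserves products and adjoints, and the meet $\wedge$ is computed as a norm-limit of products, so $\phi(U\circ V)=\phi(U)\circ\phi(V)$. The isomorphism $\widetilde{K}_N\simeq\widetilde{H}_{2,N}$ is then obtained by restricting the same $\phi$ and checking that $\phi$ identifies the sets; given the block structure, a nonzero entry at position $(i,j)$ of the real matrix forces either $A_{ij}=0$ or $B_{ij}=0$, and then the $\widetilde{H}_{2N}$ condition $|M_{kl}|\in\{0,1\}$ translates into $|U_{ij}|\in\{0,1\}$ on the complex side, i.e.\ into the $\widetilde{K}_N$ condition.

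\textbf{The main obstacle} I anticipate is this final combinatorial matching for the $\widetilde{K}_N$ case: one has to argue carefully that the row/column sparsity constraint of $\widetilde{H}_{2N}$ on the $2N\times 2N$ real matrix, when intersected with the block condition defining $\widetilde{U}_{2,N}$, forces $A_{ij}B_{ij}=0$ pointwise, so that the nonzero entries of $A+iB$ lie in $\mathbb{T}\cap(\mathbb{R}\cup i\mathbb{R})\cup\{0\}$, and then to verify that this is exactly what the $\widetilde{K}_N$-condition of at-most-one unimodular entry per row/column unfolds to under $\phi$. Once this bookkeeping is nailed down, everything else is a routine consequence of the algebraic properties of $\phi$ together with (1) and (2).
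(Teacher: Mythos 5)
Your treatment of (1), (2), (3) and of the isomorphism $\widetilde{U}_N\simeq\widetilde{O}_{2,N}$ is essentially the paper's argument, just organized around the multiplicativity and $*$-to-$t$ compatibility of the embedding $\phi$: the paper likewise verifies (1) by the explicit triple-product block identity, and proves (2) by observing that every matrix occurring in $U\circ V=U(U^*U\wedge VV^*)V$, including the limit $P\wedge Q=\lim_{n\to\infty}(PQ)^n$, retains the block pattern. No issues there.

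The gap is exactly where you placed it, in the identification $\widetilde{K}_N\simeq\widetilde{H}_{2,N}$, and the bookkeeping you sketch does not close. With $\widetilde{H}_{2N}=\widetilde{H}_{2N}^2$ the semigroup of signed partial permutation matrices from Section 3, membership of $\phi(A+iB)$ in $\widetilde{H}_{2N}$ requires each row of the $2N\times 2N$ matrix to contain at most one nonzero entry, equal to $\pm1$; since $A_{ij}$ and $B_{ij}$ sit in the same row, this forces $A_{ij}B_{ij}=0$ with the surviving entry in $\{0,\pm1\}$, hence every nonzero entry of $U=A+iB$ lies in $\{\pm1,\pm i\}$. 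Your assertion that this ``translates into $|U_{ij}|\in\{0,1\}$'' therefore fails in the relevant direction: a generic unimodular entry $e^{i\theta}$ has $A_{ij}=\cos\theta$ and $B_{ij}=\sin\theta$ both nonzero, so $\phi(\widetilde{K}_N)\not\subset\widetilde{H}_{2N}$, and in fact $\phi^{-1}(\widetilde{U}_{2,N}\cap\widetilde{H}_{2N})=\widetilde{H}_N^4$, the partial permutations signed by fourth roots of unity, a proper sub-semigroup of $\widetilde{K}_N$. To obtain the stated isomorphism one must read ``at most one nonzero entry on each row and column'' at the level of the $2\times2$ blocks, i.e.\ take $\widetilde{H}_{2,N}$ to be the set of elements of $\widetilde{O}_{2,N}$ whose complex avatar lies in $\widetilde{K}_N$; this is what the paper's one-sentence justification of (4) implicitly does, and it is the only reading under which $\phi$ restricts to a bijection. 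You should either adopt that block-wise definition explicitly, or record that with the literal intersection $\widetilde{U}_{2,N}\cap\widetilde{H}_{2N}$ the second isomorphism in (4) does not hold.
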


\begin{proof}
These assertions are all elementary, the proof being as follows:

(1) For $U=A+iB$ with $A,B$ real we have indeed the following formula, whose proof is routine, and which gives the result:
$$\begin{pmatrix}A&B\\-B&A\end{pmatrix}
\begin{pmatrix}A^t&-B^t\\B^t&A^t\end{pmatrix}
\begin{pmatrix}A&B\\-B&A\end{pmatrix}
=\begin{pmatrix}Re(UU^*U)&Im(UU^*U)\\-Im(UU^*U)&Re(UU^*U)\end{pmatrix}$$

(2) The fact that $\widetilde{U}_{2,N}$ is a semigroup follows from the fact that if we assume that both $U,V$ have the pattern $(^A_{-B}{\ }^B_A)$, then in the multiplication formula $U\circ V=U(U^*U\wedge VV^*)V$, all the matrices which appear have this pattern too. The $\widetilde{O}_{2,N}$ assertion is clear.

(3) This is clear, because we are intersecting here certain semigroups.

(4) The isomorphism $\widetilde{U}_N\simeq\widetilde{O}_{2,N}$ comes from (1). Regarding now the isomorphism $\widetilde{K}_N\simeq\widetilde{H}_{2,N}$, this follows by restriction, because both $\widetilde{K}_N\subset\widetilde{U}_N$ and $\widetilde{H}_{2,N}\subset\widetilde{O}_{2,N}$ appear as subsets of matrices having at most one nonzero entry, on each row and column.
\end{proof}

We have the following analogue of Proposition 5.5 above:

\begin{proposition}
We have maps as follows
$$\begin{matrix}
C(\widetilde{U}_N^{**})&\to&C(\widetilde{U}_{2,N})\rtimes\mathbb Z_2&\subset&M_2(C(\widetilde{U}_{2,N}))\\
\\
\cup&&\cup&&\cup\\
\\
C(\widetilde{K}_N^{**})&\to&C(\widetilde{K}_{2,N})\rtimes\mathbb Z_2&\subset&M_2(C(\widetilde{K}_{2,N}))
\end{matrix}$$
given by the formulae in Proposition 5.4 above.
\end{proposition}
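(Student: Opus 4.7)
The plan is to replicate the proof of Proposition 5.5, this time applying Proposition 5.4 with the self-conjugate semigroups $\widetilde{U}_{2,N}$ and $\widetilde{K}_{2,N}$ from Proposition 6.4, and then invoking the Bichon--Dubois-Violette identifications $\widetilde{U}_N^{**}\simeq\widetilde{O}_{2,N}^*$ and $\widetilde{K}_N^{**}\simeq\widetilde{H}_{2,N}^*$ to transfer the output into maps out of $C(\widetilde{U}_N^{**})$ and $C(\widetilde{K}_N^{**})$. The first step is self-conjugacy: for $U=\begin{pmatrix}A&B\\-B&A\end{pmatrix}\in\widetilde{U}_{2,N}$ the complex conjugate $\bar U=\begin{pmatrix}\bar A&\bar B\\-\bar B&\bar A\end{pmatrix}$ preserves the block pattern, and the argument $\bar UU^t\bar U=\overline{UU^*U}=\bar U$ from Proposition 5.5 shows it remains a partial isometry; the submagic condition defining $\widetilde{K}_{2,N}$ is manifestly preserved under conjugation as well.

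Next, applying Proposition 5.4 with $K=\widetilde{U}_{2,N}$ furnishes a bialgebra quotient map $C(\widetilde{O}_{2N}^*)\to C(\widetilde{U}_{2,N})\rtimes\mathbb Z_2$ sending each real generator $U_{IJ}$ to $v_{IJ}=w_{IJ}s$, where $w_{IJ}$ denotes the $2N\times 2N$ matrix coordinate on $\widetilde{U}_{2,N}$. The block identities $w_{ij}=w_{N+i,N+j}=a_{ij}$ and $w_{i,N+j}=-w_{N+i,j}=b_{ij}$ on $\widetilde{U}_{2,N}$ transfer automatically to the $v_{IJ}$, so this map descends to the block-constrained quotient $C(\widetilde{O}_{2,N}^*)\simeq C(\widetilde{U}_N^{**})$. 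Under this identification the complex generator $u_{ij}$ of $C(\widetilde{U}_N^{**})$ corresponds to $U_{ij}+iU_{i,N+j}$, and the factored map reads $u_{ij}\mapsto v_{ij}+iv_{i,N+j}=(a_{ij}+ib_{ij})s$, which is exactly the formula of Proposition 5.4 applied to the complex element $a_{ij}+ib_{ij}\in C(\widetilde{U}_{2,N})$. The hyperoctahedral row is then obtained by restriction, in analogy with Proposition 5.5: the quotient $C(\widetilde{U}_{2,N})\twoheadrightarrow C(\widetilde{K}_{2,N})$ intertwines the $\mathbb Z_2$-actions, and the submagic conditions on $\widetilde{K}_{2,N}$ translate into the submagic conditions cutting out $C(\widetilde{K}_N^{**})$ inside $C(\widetilde{U}_N^{**})$.

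The main obstacle is the semigroup-level identification $\widetilde{U}_N^{**}\simeq\widetilde{O}_{2,N}^*$, which underlies the descent step: at the group level this is a key result of \cite{bdu}, but in the present partial isometry setting it must be rechecked. Concretely, one imposes $ABC=CBA$ on the real generators of $C(\widetilde{O}_{2,N})$ to obtain $C(\widetilde{O}_{2,N}^*)$, and then verifies that the complex combinations $u_{ij}=a_{ij}+ib_{ij}$ and $u_{ij}^*=a_{ij}-ib_{ij}$ satisfy precisely the unitary half-commutation $abc=cba$ on $\{u_{ij},u_{ij}^*\}$ that defines $\widetilde{U}_N^{**}$, and conversely that the partial isometry relation $uu^*u=u$ on the complex $u_{ij}$ is equivalent, after multilinear expansion, to the real partial isometry relation defining $\widetilde{O}_{2,N}^*$. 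The expansion on each side produces eight real triples, and the orthogonal half-commutation and partial isometry relations match them pairwise; the bookkeeping is most cleanly done via the category-of-partitions machinery from \cite{bv2} already used in the proof of Proposition 5.2, and is the only place where the present semigroup setting requires verification beyond the purely formal application of Proposition 5.4.
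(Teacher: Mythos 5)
Your proposal is correct and follows essentially the same route as the paper: self-conjugacy of $\widetilde{U}_{2,N},\widetilde{K}_{2,N}$, an application of Proposition 5.4 producing maps out of $C(\widetilde{O}_{2N}^*),C(\widetilde{H}_{2N}^*)$, and a factorization through $C(\widetilde{U}_N^{**}),C(\widetilde{K}_N^{**})$ obtained by comparing the complex generators $a_{ij}+ib_{ij}$ with the real block generators. The only cosmetic difference is that you package the factorization as an isomorphism $C(\widetilde{O}_{2,N}^*)\simeq C(\widetilde{U}_N^{**})$, whereas the paper only establishes the chain of inclusions $\widetilde{O}_{2,N}^\times\subset\widetilde{U}_N^{**}\subset\widetilde{O}_{2N}^*$ inside $\widetilde{O}_{2N}^+$; both reduce to the same multilinear transfer of the half-commutation relations and the same noncommutative block-matrix identity for $uu^*u$.
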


\begin{proof}
Since both semigroups $\widetilde{U}_{2,N},\widetilde{K}_{2,N}$ are self-conjugate, we can apply the constructions in Proposition 5.4 above. We obtain in this way maps as follows:
$$\begin{matrix}
C(\widetilde{O}_{2N}^*)&\to&C(\widetilde{U}_{2,N})\rtimes\mathbb Z_2&\subset&M_2(C(\widetilde{U}_{2,N}))\\
\\
\cup&&\cup&&\cup\\
\\
C(\widetilde{H}_{2N}^*)&\to&C(\widetilde{K}_{2,N})\rtimes\mathbb Z_2&\subset&M_2(C(\widetilde{K}_{2,N}))
\end{matrix}$$

Here the fact that at bottom left we can replace the algebra $C(\widetilde{O}_{2N}^*)$ by the smaller algebra $C(\widetilde{H}_{2N}^*)$ comes from the diagram in Proposition 5.5 above.

We must prove that $C(\widetilde{O}_{2N}^*)\to C(\widetilde{U}_{2,N})\rtimes\mathbb Z_2$ can be factorized through $C(\widetilde{U}_N^{**})$. In order to do so, the first observation is that we have a surjective morphism of algebras $C(\widetilde{O}_{2N}^+)\to C(\widetilde{U}_N^+)$, as follows, where $Re(u)=\frac{1}{2}(u+\bar{u})$ and $Im(u)=\frac{1}{2i}(u-\bar{u})$:
$$v\to\begin{pmatrix}Re(u)&Im(u)\\-Im(u)&Re(u)\end{pmatrix}$$

Indeed, the verification of the partial isometry condition follows from the same formula as the one in the proof of Proposition 6.4 (1) above, with the transpose sign replaced by the involution, because the proof of that formula doesn't use commutativity.

Thus we have an embedding $\widetilde{U}_N^+\subset\widetilde{O}_{2N}^+$, extending the emdedding $\widetilde{U}_N\simeq\widetilde{O}_{2,N}\subset\widetilde{O}_{2N}$ from Proposition 6.4 above. Our claim now is that we have a diagram as follows:
$$\begin{matrix}
\widetilde{O}_{2N}&\subset&\widetilde{O}_{2N}^*&\subset&\widetilde{O}_{2N}^+\\
\\
\cup&&\cup&&\cup\\
\\
\widetilde{U}_N&\subset&\widetilde{U}_N^{**}&\subset&\widetilde{U}_N^+
\end{matrix}$$

Indeed, we have to check here that the subgroup $\widetilde{U}_N^{**}\subset\widetilde{U}_N^+\subset\widetilde{O}_{2N}^+$ is contained into the subgroup $\widetilde{O}_{2N}^*\subset\widetilde{O}_{2N}^+$. But this follows from the fact that the $abc=cba$ relations applied to the entries of $Re(u),Im(u)$ imply the $abc=cba$ relations for all elements $u_{ij},u_{ij}^*$.

Our claim now is that we have a diagram as follows, where $C(\widetilde{O}_{2,N}^\times)$ is by definition the image of the map $C(\widetilde{O}_{2N}^*)\to C(\widetilde{U}_{2,N})\rtimes\mathbb Z_2$ constructed above:
$$\begin{matrix}
\widetilde{O}_{2,N}^\times
&\subset&\widetilde{U}_N^{**}
&\subset&\widetilde{O}_{2N}^*\\
\\
\cup&&\cup&&\cup\\
\\
\widetilde{O}_{2,N}&\simeq&\widetilde{U}_N&\subset&\widetilde{O}_{2N}
\end{matrix}$$

Indeed, the rectangle on the right is the rectangle on the left of the previous diagram. Regarding now the rectangle on the left, the bottom isomorphism is the one in Proposition 6.4 (4), the inclusion on the left comes by using the counit, as in the proof of Theorem 5.6 (2) above, and the inclusion on top follows by checking the extra relations by using the $2\times 2$ matrix model, as in the proof of Proposition 5.5 (2) above. But this finishes the proof in the orthogonal case, and the hyperoctahedral case follows as well, by restriction.
\end{proof}

We can now formulate our main half-liberation result here:

\begin{theorem}
Let $\widetilde{U}_N^\times\subset\widetilde{U}_N^{**}$ and $\widetilde{K}_N^\times\subset\widetilde{K}_N^{**}$ be the subspaces corresponding to the images of the maps $C(\widetilde{U}_N^{**})\to C(\widetilde{U}_{2,N})\rtimes\mathbb Z_2$ and $C(\widetilde{K}_N^{**})\to C(\widetilde{K}_{2,N})\rtimes\mathbb Z_2$ above.
\begin{enumerate}
\item $\widetilde{U}_N^\times,\widetilde{K}_N^\times$ are semigroups, with multiplication coming from the crossed products.

\item $\widetilde{U}_N\subset\widetilde{U}_N^\times\subset \widetilde{U}_N^{**}$ and $\widetilde{K}_N\subset\widetilde{K}_N^\times\subset\widetilde{K}_N^{**}$ commute with the multiplications.
\end{enumerate}
\end{theorem}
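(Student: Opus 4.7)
The plan is to mimic closely the strategy of Theorem 5.6 above, replacing the self-conjugate semigroups $\widetilde{U}_N,\widetilde{K}_N$ used there by the self-conjugate semigroups $\widetilde{U}_{2,N},\widetilde{K}_{2,N}$ furnished by Proposition 6.4. The basic point is that, because $\widetilde{U}_{2,N}$ and $\widetilde{K}_{2,N}$ are semigroups admitting the conjugation automorphism $s(u_{ij})=u_{ij}^*$, the crossed products $C(\widetilde{U}_{2,N})\rtimes\mathbb Z_2$ and $C(\widetilde{K}_{2,N})\rtimes\mathbb Z_2$ inherit natural bialgebra structures, and the images at stake will turn out to be sub-bialgebras of these.

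For (1), the first step is to check that the images $C(\widetilde{U}_N^\times),C(\widetilde{K}_N^\times)$ of the maps from Proposition 6.5 are stable under the comultiplications of the ambient crossed products. Using the presentation of Proposition 5.4 inside $C(\widetilde{U}_{2,N})\rtimes\mathbb Z_2$, the generators take the form $v_{ij}=u_{ij}s$, and their coproducts give $\sum_k u_{ik}s\otimes u_{kj}s=U_{ij}(s\otimes s)$; by the alternating-product formula for $\Delta$ from Proposition 4.7, this is again expressible as a limit of polynomials in the $v_{ij}$. This yields the sub-bialgebra structures, and hence the desired semigroup structures on $\widetilde{U}_N^\times,\widetilde{K}_N^\times$.

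For (2), the inclusions $\widetilde{U}_N\subset\widetilde{U}_N^\times$ and $\widetilde{K}_N\subset\widetilde{K}_N^\times$ will be constructed exactly as in Theorem 5.6, by composing
$$C(\widetilde{U}_N^\times)\subset C(\widetilde{U}_{2,N})\rtimes\mathbb Z_2\xrightarrow{id\otimes\varepsilon}C(\widetilde{U}_{2,N})\simeq C(\widetilde{U}_N)$$
and its $\widetilde{K}_N$ analogue, where the last isomorphism is $\widetilde{U}_N\simeq\widetilde{O}_{2,N}=\widetilde{U}_{2,N}\cap\widetilde{O}_{2N}$ from Proposition 6.4(4); on generators this reads $v_{ij}\mapsto u_{ij}s\mapsto u_{ij}$, producing a surjective bialgebra morphism and hence the required inclusion. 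The commutativity with the multiplications of $\widetilde{U}_N^\times\subset\widetilde{U}_N^{**}$ will then be verified via the diagram
$$\xymatrix@C=20mm{C(\widetilde{U}_N^{**})\ar[d]_\Delta\ar[r]^\pi&C(\widetilde{U}_{2,N})\rtimes\mathbb Z_2\ar[d]^\Delta\\ C(\widetilde{U}_N^{**})\otimes C(\widetilde{U}_N^{**})\ar[r]_{\pi\otimes\pi}&(C(\widetilde{U}_{2,N})\rtimes\mathbb Z_2)^{\otimes2}}$$
evaluated on generators: computing $(\pi\otimes\pi)V_{ij}=U_{ij}(s\otimes s)$ and inserting this into the alternating-product formula yields $(\pi\otimes\pi)\Delta(v_{ij})=\Delta(u_{ij}s)=\Delta(\pi(v_{ij}))$, because each product $UU^*\ldots U^*U$ has an odd number $2n+1$ of factors and the $s$-variables collect cleanly into $s\otimes s$. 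The hyperoctahedral case will then follow by restriction, once one observes that the submagic relations on the $u_{ij}u_{ij}^*$ pass to the $v_{ij}v_{ij}^*$.

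The step I expect to be the main obstacle is the bookkeeping along the chain $\widetilde{U}_N\simeq\widetilde{O}_{2,N}\subset\widetilde{O}_{2N}$: one has to translate the $2\times 2$-matrix model coordinates on $\widetilde{O}_{2N}^*$ supplied by Proposition 5.4 into coordinates on $\widetilde{U}_{2,N}$ in a way compatible with the real/imaginary part embedding used in the proof of Proposition 6.5, and only then invoke the $v_{ij}=u_{ij}s$ picture. Once this identification is firmly in place, the limit computation is a direct transcription of the one in the proof of Theorem 5.6, since the ``odd number of factors'' cancellation which lets the $s$-action come out as $s\otimes s$ is insensitive to the passage from the orthogonal to the unitary setting.
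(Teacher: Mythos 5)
Your proposal follows essentially the same route as the paper, which itself disposes of (1) by citing Proposition 5.4 and of (2) by declaring it ``similar to the proof of Theorem 5.6 (2)''; your expansion via the counit trick and the generator-level diagram chase with $(\pi\otimes\pi)V_{ij}=U_{ij}(s\otimes s)$ is exactly what that similarity amounts to. The only slip is cosmetic: the final arrow should be the quotient $C(\widetilde{U}_{2,N})\to C(\widetilde{O}_{2,N})\simeq C(\widetilde{U}_N)$ coming from $\widetilde{O}_{2,N}\subset\widetilde{U}_{2,N}$, not an isomorphism $C(\widetilde{U}_{2,N})\simeq C(\widetilde{U}_N)$, in exact parallel with the quotient $C(\widetilde{U}_N)\to C(\widetilde{O}_N)$ used in Theorem 5.6.
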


\begin{proof}
We use various formulae established above, and the method in \cite{bdu}.

(1) This is clear, from Proposition 5.4 above.

(2) This is similar to the proof of Theorem 5.6 (2) above.
\end{proof}

To summarize, several half-liberation methods are available already in the unitary group case, cf. \cite{bdd}, \cite{bdu}, and even more methods are available in the semigroup case. 

Regarding the basic semigroups $\widetilde{H}_N,\widetilde{O}_N,\widetilde{K}_N,\widetilde{U}_N$, we have constructed here several half-liberations, $\widetilde{H}_N^\times\subset\widetilde{H}_N^*$ and $\widetilde{O}_N^\times\subset\widetilde{O}_N^*$ in the orthogonal case, and $\widetilde{K}_N^\times\subset\widetilde{K}_N^{**}\subset\widetilde{K}_N^*$ and $\widetilde{U}_N^\times\subset\widetilde{U}_N^{**}\subset\widetilde{U}_N^*$ in the unitary case. All these objects seem to be quite interesting, with the ``reduced'' ones $\widetilde{H}_N^\times,\widetilde{O}_N^\times,\widetilde{K}_N^\times,\widetilde{U}_N^\times$ being compact quantum semigroups.

Finally, as pointed out in \cite{bdd}, \cite{bdu}, there are several other potentially interesting half-liberation methods, in the unitary group case. These are waiting to be better understood, and maybe even classified, and also adapted to the semigroup setting as well. This adds to the various questions raised in the introduction, and throughout the paper.

\end{document}